\numberwithin{figure}{section}
\numberwithin{equation}{section}
\newtheorem{theorem}{Theorem}[section]
\theoremstyle{definition}
\newtheorem{definition}[theorem]{Definition}
\newtheorem{remark}[theorem]{Remark}
\newtheorem{lem}{Lemma}
\newcommand{\dps}{\displaystyle}
\newcommand{\fr}{\frac}
\newcommand{\pa}{\partial}
\newcommand{\les}{\lesssim}
\newcommand{\Rm}[1]{
  \textup{\uppercase\expandafter{\romannumeral#1}}
}
\newcommand{\ub}{\mathbf{u}}
\newcommand{\nb}{\mathbf{n}}
\def\ds{\displaystyle}
\newcommand{\be}{\begin{enumerate}}
\newcommand{\ee}{\end{enumerate}}
\newcommand{\bi}{\begin{itemize}}
\newcommand{\ei}{\end{itemize}}
\newcommand{\ba}{\begin{array}}
\newcommand{\ea}{\end{array}}
\newcommand{\bal}{\@ifstar{\@bals}{\@bal}}
\def\@bals#1\eal{\begin{align*}#1\end{align*}}
\def\@bal#1\eal{\begin{align}#1\end{align}}
\let\oldtocsection=\tocsection
\let\oldtocsubsection=\tocsubsection
\let\oldtocsubsubsection=\tocsubsubsection
\renewcommand{\tocsection}[2]{\hspace{0em}\oldtocsection{#1}{#2}}
\renewcommand{\tocsubsection}[2]{\hspace{1em}\oldtocsubsection{#1}{#2}}
\renewcommand{\tocsubsubsection}[2]{\hspace{2em}\oldtocsubsubsection{#1}{#2}}
\patchcmd{\@settitle}{\uppercasenonmath\@title}{}{}{}
\patchcmd{\@setauthors}{\MakeUppercase}{}{}{}
\begin{document}
\title[Initial-boundary value problems for Poiseuille flow of Ericksen-Leslie model]{\bf\Large  Initial-boundary value problems for Poiseuille flow of nematic liquid crystal via full Ericksen-Leslie model}

   \author[Geng Chen]{Geng Chen$^{(*)}$ \\
(*) Department of Mathematics, University of Kansas,
Lawrence, KS, U.S.A.
\\
e-mail:~gengchen@ku.edu
}
\author[Yanbo Hu]{\\Yanbo Hu$^{(**)}$\\(**) Department of Mathematics, Hangzhou Normal University, Hangzhou, China\\ e-mail:~yanbo.hu@hotmail.com
}
\author[Qingtian Zhang]{\\Qingtian Zhang$^{(***)}$ \\
(***) Department of Mathematics, West Virginia University,
Morgantown, WV 26501, U.S.A.
\\
e-mail:~qingtian.zhang@mail.wvu.edu
}
\date{}

\begin{abstract}
In this paper, we study the initial-boundary value problem for the Poiseuille flow of hyperbolic-parabolic Ericksen-Leslie model of nematic liquid crystals in one space dimension. Due to the quasilinearity, the solution of this model in general forms cusp singularity. We prove the global existence of H\"older continuous solution, which may include cusp singularity, for initial-boundary value problems with different types of boundary conditions.

\noindent {\tiny KEYWORDS. } Liquid crystal; Ericksen-Leslie; Poiseuille flow; global existence; Initial-boundary value problem.

\noindent AMS subject classifications.   35M33; 35L53; 76D03.
\end{abstract}

%\renewcommand{\thefootnote}{\fnsymbol{footnote}}
%\footnotetext{ $^{(**)}$Corresponding author.}

\maketitle

%\tableofcontents

\section{Introduction}
The hydrodynamic theory of incompressible liquid crystals was established by Ericksen \cite{Eric61, Eric87, Eric90} and Leslie \cite{Les79}.
The Ericksen-Leslie's system is written as
\begin{equation}\label{ELsys}
\left\{
\begin{array}{ll}
\rho\dot \ub+\nabla P=\nabla \cdot\sigma-\nabla(\frac{\partial W}{\partial \nabla \nb}\otimes\nabla \nb),\\
\nabla \cdot\ub=0,\\
\nu\ddot \nb=\lambda\nb-\frac{\partial W}{\partial \nb}-\mathbf g+\nabla\cdot(\frac{\partial W}{\partial\nabla\nb}),\\
|\nb|=1,
\end{array}
\right.
\end{equation}
where $\ub$ is the velocity, $\nb$ is the director field of the liquid crystal molecules, $\rho$ is the constant density, $P$ is the pressure, $\nu$ is the inertia coefficient of the director $\nb$. $\mathbf g$ and $\sigma$ are the kinematic transport and the viscous stress tensor, respectively, which satisfy
\begin{align*}
&\mathbf g=\gamma_1N+\gamma_2 D\nb,\quad N=\dot\nb-\omega \nb,\\
&D=\frac12(\nabla\ub+\nabla^t\ub),\quad \omega=\frac12(\nabla\ub-\nabla^t\ub), \\
&\sigma=\alpha_1(\nb^tD\nb)\nb\otimes\nb+\alpha_2 N\otimes\nb+\alpha_3\nb\otimes N+\alpha_4D+\alpha_5(D\nb)\otimes\nb+\alpha_6\nb\otimes(D\nb),
\end{align*}
where $\alpha_1, \cdots, \alpha_6, \gamma_1, \gamma_2$ are physical coefficients satisfying (see \cite{Les79,CHL20})
$$\gamma_1=\alpha_3-\alpha_2, \gamma_2=\alpha_6-\alpha_5, \alpha_2+\alpha_3=\alpha_6-\alpha_5,$$
$$\alpha_4>0, 2\alpha_1+3\alpha_4+2\alpha_5+2\alpha_6>0, \gamma_1>0, $$
$$2\alpha_4+\alpha_5+\alpha_6>0, 4\gamma_1(2\alpha_4+\alpha_5+\alpha_6)>(\alpha_2+\alpha_3+\gamma_2)^2.$$

If the orientation order parameters of nematic materials are treated as a unit vector ${\nb}\in \mathbb S^2$, the director, then the Oseen-Frank energy density determines the macrostructure of the crystal structure (\cite{Les79})
 \begin{align*}\begin{split}
2W(\nb,\nabla \nb)=&K_1(\nabla \cdot \nb)^2+K_2(\nb\cdot(\nabla\times \nb))^2+K_3|\nb\times (\nabla \times \nb)|^2\\
&+(K_2+K_4)[\hbox{tr}(\nabla \nb)^2-(\nabla \cdot \nb)^2 ],
\end{split}
\end{align*}
where $K_j$, $j=1,2,3$, are the positive constants representing splay, twist, and bend effects respectively, with
$K_2\geq |K_4|$, $2K_1\geq K_2+K_4$.

Successful theories with a wide range of interesting properties  have been established for the equilibrium theory (the elliptic case on $\nb$) and the evolutionary theory (the parabolic case on $\nb$) when the inertial effect in \eqref{ELsys} is neglected, i.e. when $\nu=0$. See a partial list of references in  \cite{HardtK87, linlius01, linwangs14, lin89, huanglinwang14, linwang16, hongxin12, LTX16, wangwang14,WZZ13}.

However, there are very few studies on the full Ericksen-Leslie system, including the inertial effect, i.e. when $\nu=0$. Many fundamental problems including global wellposedness are still wide open. When one considers a special Oseen-Frank potential $W=|\nabla \nb|^2$, which makes the wave equations on $\nb$ essentially semilinear, for small data problem, Jiang and his collaborators obtained a series of existence results on regular solutions \cite{JL19, JLT19, CJL21, HJLZ21, JL22}.

In general, the solution of \eqref{ELsys} might form a finite time singularity even in one space dimension, due to the quasilinearity in the wave equation of $\nb$. See \cite{CHL20} for the formation of finite time cusp singularity for the Poiseuille flow, and \cite{H2} for the formation of cusp singularity in multiple space dimension. For solutions in multiple space dimension, Chen-Huang-Xu in \cite{CHX} found another type of singularity for \eqref{ELsys} due to the geometric effect, similar as the one for the semilinear wave map equation.

In \cite{CHL20}, the first large data global existence result on the Cauchy problem of \eqref{ELsys} was established by Chen-Huang-Liu in \cite{CHL20} for the 1-d Poiseuille flow, where the solution may include cusp singularity.

In this paper, we establish some global existence results for the initial-boundary value problem of the Poiseuille flow for nematic liquid crystals via the full Ericksen-Leslie model \eqref{ELsys}. More precisely, for the 1-d Poiseuille flow with $\ub=(0,0, u)^t, \nb=(\sin\theta, 0, \cos\theta)^t$, the system of $u$ and $\theta$ becomes
\begin{equation}\label{sys}
\left\{
\begin{array}{ll}
\rho u_t=(g(\theta)u_x+h(\theta)\theta_t)_x,\\
\nu\theta_{tt}+\gamma_1\theta_t=c(\theta)(c(\theta)\theta_x)_x-h(\theta)u_x,
\end{array}\right.
\end{equation}
where
\begin{equation*}
\begin{aligned}
&g(\theta):=\alpha_1\sin^2\theta\cos^2\theta+\frac{\alpha_5-\alpha_2}2\sin^2\theta+\frac{\alpha_3+\alpha_6}2\cos^2\theta+\frac{\alpha_4}2,\\
&c(\theta):=\sqrt{K_1\cos^2\theta+K_3\sin^2\theta},\\
&h(\theta):=\alpha_3\cos^2\theta-\alpha_2\sin^2\theta=\frac{\gamma_1+\gamma_2\cos(2\theta)}2.
\end{aligned}
\end{equation*}
See the derivation of this model in \cite{CHL20}.
Notice that $g(\theta), c(\theta), h(\theta)$ are smooth with respect to $\theta$, and $g(\theta), c(\theta), h(\theta), g'(\theta), c'(\theta), c''(\theta), h'(\theta)$ are uniformly bounded.

There are several physically interesting boundary conditions. For the velocity $u$, we have the following possible choices. We denote an important quantity
\[
J=g(\theta)u_x+h(\theta)\theta_t.
\]
\begin{itemize}
\item Nonslip boundary condition. If the boundary is a solid wall, we can propose the nonslip boundary condition $u=0$ on the boundary. If the boundary is moving, we can also propose $u=f(t)$ on the boundary with $f(t)$ is a given function with $t$. This corresponds to the Dirichlet boundary condition on $u$.
\item Stress-free boundary condition. On the boundary the shear stress is zero. $(1,0,0)D=(0,0,J)^t={\mathbf 0}$, where $(1,0,0)$ is the normal direction of the boundary. This boundary condition can be seen in the jet with a free boundary. Under the assumption of the Poiseuille flow, there is no in-flow or out-flow, so the boundary is fixed. This corresponds to the Neumann boundary condition on $u$.
\item Navier boundary condition. On the boundary, the shear stress is proportional to the tangential velocity, that is $J=-\gamma u$. This corresponds to the Robin boundary condition on $u$.
\end{itemize}

For the director $\nb$, one can propose the following boundary conditions \cite{Ste}.
\begin{itemize}
\item Strong anchoring condition. On the boundary, $\nb$ is given. This corresponds to the Dirichlet boundary condition.
\item No anchoring condition. $\frac{\partial W}{\partial n_{i,j}}\nu_j=0$.
\item Weak anchoring condition. $\frac{\partial W_F}{\partial n_{i,j}}\nu_j+\frac{\partial W_s}{\partial n_{i}}=\gamma n_i$.
\end{itemize}

In this paper, we will consider all combinations of boundary conditions, excluding the Navier boundary condition.

\subsection{Main result}
For simplicity, we only
consider a special case when $\alpha_1=0$, $\alpha_5-\alpha_2=\alpha_3+\alpha_6$, so that $g(\theta)$ is a constant $\frac{\alpha_3+\alpha_4+\alpha_6}2$. Without loss of generality, we assume $g(\theta)=1$, $\nu=1, \gamma_1=2, \gamma_2=0$ and $\rho=1$. So $h(\theta)=1$. Then the system is written as
\begin{equation}\label{1.1}
\left\{
\begin{array}{ll}
 u_t=(u_x+\theta_t)_x,\\
\theta_{tt}+2\theta_t=c(\theta)(c(\theta)\theta_x)_x-u_x,
\end{array}\right.
\end{equation}
where the function $c(\cdot)$ is a $C^2$ function satisfying
\begin{align}\label{1.6}
0<C_L\leq c(\cdot)\leq C_U<\infty,\  \ |c'(\cdot)|\leq C_1<\infty,
\end{align}
for some positive constants $C_L$, $C_U$ and $C_1$. In this case, $J=u_x+\theta_t$.

The first equation of \eqref{1.1} has constant coefficients. This gives us some technical advantage than \eqref{sys}. In \cite{CHL20}, the authors first considered the simplified system \eqref{1.1}. Later in \cite{CLS}, the global existence result of Cauchy problem was extended to the general system \eqref{sys}. We conject the result in this paper on \eqref{1.1} (Theorem \ref{thm_0}) still holds for \eqref{sys}.

The following global existence theorem on the initial boundary value problem of \eqref{1.1} on $x\in[0,\pi]$ is our main theorem. For the convenience on notations, we choose the domain as $x\in[0,\pi]$. The result holds for any bounded domain  $x\in[a,b]$ by the same proof.
\begin{theorem}\label{thm_0}
Assume initially
 \begin{equation}\label{1.2}
 u(x,0)=u_0(x)\in H^1([0,\pi]),~~ \theta(x,0)=\theta_0(x) \in H^1([0,\pi]),~~ \theta_t(x,0)=\theta_1(x) \in L^2([0,\pi]).
 \end{equation}
We consider one of the following boundary conditions:
 \begin{equation}\label{BC}
\begin{aligned}
&u(0,t)=u(\pi,t)=0,\\
&-\iota_1 \theta(0,t)+\iota_2\theta_x(0,t)=\iota_3 \theta(\pi,t)+\iota_4\theta_x(\pi,t)=0,
\end{aligned}
\end{equation}
or
 \begin{equation}\label{BC2}
\begin{aligned}
&(u_x+\theta_t)(0,t)=(u_x+\theta_t)(\pi,t)=0,\\
&-\iota_1 \theta(0,t)+\iota_2\theta_x(0,t)=\iota_3 \theta(\pi,t)+\iota_4\theta_x(\pi,t)=0,
\end{aligned}
\end{equation}
where $\iota_{1}$ to $\iota_4$ are nonnegative constants satisfying $\iota_{1}^2+\iota_{2}^2>0$ and $\iota_{3}^2+\iota_{4}^2>0$. The functions $u_0(x)$, $\theta_0(x)$ and $\theta_1(x)$ satisfy the corresponding compatibility conditions at $0$ and $\pi$, and two additional conditions:
\begin{align}\label{1.5}
u_{0}'(x)+\theta_1(x)\in C^\alpha([0,\pi]),
\end{align}
for some $\alpha\in(0,1/4)$, and $\theta_0$  are absolutely continuous.

For any given time $T\in(0,\infty)$,
the initial-boundary value problem \eqref{1.1}-\eqref{1.5} admits a weak solution $(u(x,t), \theta(x,t))$ defined on $[0,\pi]\times[0,T]$ in the sense of Definition \ref{def}. Moreover, the associated energy
\begin{align}\label{1.13a}
\mathcal{E}(t):=\fr{1}{2}\int_{0}^\pi(\theta_{t}^2+c^2(\theta)\theta_{x}^2+u^2)(x,t)\ {\rm d}x+B_\pi(\theta(\pi,t))+B_0(\theta(0,t)),
\end{align}
is well-defined for $t\in[0,T]$ and satisfies
\begin{align}\label{1.13b}
\mathcal{E}(t)\leq \mathcal{E}(0) -\int_{0}^t\int_{0}^\pi((u_x+\theta_t)^2+\theta_{t}^2)(x,t)\ {\rm d}x{\rm d}t,
\end{align}
where $B_0(\theta(0,t))$ and $B_\pi(\theta(\pi,t))$ are, respectively, the boundary energies on $x=0$ and $x=\pi$, defined as
\begin{equation}\label{1.13c}
B_0(\theta(0,t))=
\left\{
\begin{aligned}
\fr{\iota_1}{\iota_2}\int_{0}^{\theta(0,t)}c^2(s)s\ {\rm d}s,\ &\iota_2\neq0,\\
0, \  &\iota_2=0,
\end{aligned}
\right.
\
B_\pi(\theta(\pi,t))=
\left\{
\begin{aligned}
\fr{\iota_3}{\iota_4}\int_{0}^{\theta(\pi,t)}c^2(s)s\ {\rm d}s,\ &\iota_4\neq0,\\
0, \  &\iota_4=0.
\end{aligned}
\right.
\end{equation}
\end{theorem}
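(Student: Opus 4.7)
The plan is to adapt the characteristic-method construction of \cite{CHL20} for the Cauchy problem to the initial-boundary value setting. I would approximate the initial data by smooth functions preserving the compatibility and the boundary conditions, obtain short-time classical solutions of the regularized system by standard quasilinear theory, derive a priori estimates that are uniform in the mollification parameter, and then extract a convergent subsequence whose limit is a weak solution on $[0,\pi]\times[0,T]$ satisfying \eqref{1.13b}.

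The backbone estimate is the energy identity. Multiplying the first equation of \eqref{1.1} by $u$ and integrating by parts over $[0,\pi]$, the boundary contribution $[u(u_x+\theta_t)]_0^\pi$ vanishes under either \eqref{BC} (where $u=0$) or \eqref{BC2} (where $u_x+\theta_t=0$). Multiplying the second equation by $\theta_t$ and integrating by parts produces the dissipation $-\int(\theta_t^2+(u_x+\theta_t)^2)\,{\rm d}x$ together with the boundary term $[c^2(\theta)\theta_x\theta_t]_0^\pi$. Under the Robin condition with $\iota_2\neq 0$ one has $\theta_x(0,t)=\frac{\iota_1}{\iota_2}\theta(0,t)$, and the corresponding boundary contribution is exactly $\frac{d}{dt}B_0(\theta(0,t))$; the same is true at $x=\pi$ when $\iota_4\neq 0$. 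In the pure Dirichlet or Neumann degenerations ($\iota_2=0$ or $\iota_1=0$) the boundary term vanishes on its own and $B_0$ is consistently set to zero. Combining these gives \eqref{1.13b} as an equality for the classical approximants.

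Because the $\theta$-equation is quasilinear wave, $\theta_x$ can form cusp singularities in finite time, so the $L^2$ bounds from the energy identity alone are insufficient to pass to the limit in the nonlinear term $c(\theta)(c(\theta)\theta_x)_x$. To overcome this I would adopt the characteristic framework of \cite{CHL20}: introduce the Riemann invariants $R=\theta_t+c(\theta)\theta_x$, $S=\theta_t-c(\theta)\theta_x$ and the flux $J=u_x+\theta_t$, trace them along the forward/backward characteristics $\frac{dx}{dt}=\pm c(\theta)$, and derive transport-type ODEs for suitably rescaled quantities whose characteristic coordinates remain bounded. The Hölder hypothesis $u_0'+\theta_1=J|_{t=0}\in C^\alpha$ with $\alpha\in(0,1/4)$ propagates Hölder regularity of $J$ along characteristics, which supplies the uniform modulus needed to control cusp times. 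At the lateral boundary, characteristics must be reflected: under \eqref{BC} (resp.\ \eqref{BC2}) an odd (resp.\ even) extension of $u$ across $x=0,\pi$ is natural; the Robin condition on $\theta$ with $\iota_2\neq 0$ translates via $R-S=2c(\theta)\theta_x=\frac{2\iota_1}{\iota_2}c(\theta)\theta$ into a prescribed linear relation between the incoming and outgoing invariants at reflection, which can then be propagated through the characteristic ODEs.

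The main obstacle is the passage to the limit in $c(\theta)(c(\theta)\theta_x)_x$ in the weak formulation, since only weak convergence of $\theta_x$ is available a priori. Strong compactness of $\theta$ in $C([0,T]\times[0,\pi])$, following from the 1D Sobolev embedding applied to the energy bound, upgrades to strong convergence of $c(\theta)$; combined with the boundedness and continuous dependence in the characteristic ODE system, one can pass to the limit in characteristic coordinates and then transfer back to Eulerian variables. The most delicate new ingredient, relative to \cite{CHL20}, is the careful bookkeeping of characteristic reflections at $x=0,\pi$ together with the verification that both the chosen boundary family in \eqref{BC}/\eqref{BC2} and the Robin condition on $\theta$ survive the limit, and that the energy inequality \eqref{1.13b} is preserved—in the limit one may lose the equality and only retain the inequality, which is all that is claimed.
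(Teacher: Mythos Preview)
Your plan diverges from the paper's construction in a structural way, and it contains a genuine gap concerning the regularity of $J=u_x+\theta_t$.

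The paper does \emph{not} proceed by mollifying the data, obtaining classical approximants, and passing to the limit via compactness. Instead it decouples the system: for a given $J\in C^\alpha(\Omega_T)$ it solves the wave equation $\theta_{tt}+\theta_t-c(c\theta_x)_x+J=0$ globally on the strip by transforming to characteristic coordinates $(X,Y)$ (the Bressan--Zheng semilinear system in $(\theta,w,z,p,q)$, with the boundary conditions on $\theta$ translated into relations on $L_0:\,Y=X$ and $L_\pi:\,Y=X-\widetilde X$ and the region tiled into blocks $\Omega^n$ so that the characteristic reflections are handled explicitly). Then it observes that $J$ satisfies a \emph{heat equation}
\[
J_t-J_{xx}=c(\theta)(c(\theta)\theta_x)_x-\theta_t-J,
\]
with Neumann data $J_x(0,t)=J_x(\pi,t)=0$ under \eqref{BC} (Dirichlet data under \eqref{BC2}). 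The Green/Neumann function on $[0,\pi]$ is written as an image series, and the Duhamel representation defines a map $\mathcal T:J\mapsto \mathcal M(J)$ on $C^\alpha$. A delicate estimate (the restriction $\alpha<1/4$ enters here) shows $\mathcal T$ maps a $C^\alpha$ ball into itself and is $L^\infty$-continuous, so Schauder's fixed point theorem produces $J^*$; one then recovers $u$ from a separate heat problem and checks $J^*=u_x+\theta_t$. The energy inequality closes the loop and allows iteration in $t$.

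The gap in your proposal is the sentence ``the H\"older hypothesis $J|_{t=0}\in C^\alpha$ propagates H\"older regularity of $J$ along characteristics.'' This is incorrect: $J$ is not transported along $\pm c(\theta)$-characteristics; its regularity comes entirely from parabolic smoothing in the heat equation above, after one has traded the $H^{-1}$ source $\theta_{tt}$ for $c(c\theta_x)_x$ via the wave equation. Without this mechanism you have no uniform modulus on $J$ for your approximants, and without $J\in L^\infty\cap C^\alpha$ the characteristic ODE system for $(\theta,w,z,p,q)$ cannot be closed (its right-hand side contains $J$). Consequently the compactness step you outline---upgrading $L^\infty_tH^1_x$ bounds on $\theta$ and weak control on $\theta_x$ to convergence in $c(\theta)(c(\theta)\theta_x)_x$---would not go through as stated. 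Your boundary-reflection remarks and the energy identity are correct in spirit, but the missing parabolic-fixed-point layer for $J$ is the essential idea that makes the construction work.
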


\bigskip

Next, we define the weak solution. First, the energy equality for the smooth solution is
\begin{align}\label{1.6a}
\frac{\rm d}{{\rm d}t}\int_0^\pi \frac12 u^2+\frac12 \theta_t^2+\frac12c^2\theta_x^2{\rm d}x+\int_0^\pi \theta_t^2+(u_x+\theta_t)^2 {\rm d}x-\big[(u_x+\theta_t) u+c^2\theta_t\theta_x\big]\Big|_{x=0}^{x=\pi}=0, \nonumber \\
\frac{\rm d}{{\rm d}t}\left[\int_0^\pi \frac12 u^2+\frac12 \theta_t^2+\frac12c^2\theta_x^2{\rm d}x +B_\pi(\theta(\pi,t))+B_0(\theta(0,t))\right]+\int_0^\pi \theta_t^2+(u_x+\theta_t)^2 {\rm d}x=0.
\end{align}

\begin{definition}\label{def}\em{
For any given time $T>0$, we say $(u(x,t), \theta(x,t))$, defined for all $(x,t)\in[0,\pi]\times[0,T]$, is a weak solution to the initial-boundary value problem \eqref{1.1}-\eqref{1.5} if

(i) there hold
\begin{align}\label{1.7}
\int_{0}^T\int_{0}^\pi\bigg(u\psi_t-(u_x+\theta_t)\psi_x\bigg)\ {\rm d}x{\rm d}t+\int_0^T [(u_x+\theta_t)\psi]\bigg|_{x=0}^{x=\pi} {\rm d}t=0,
\end{align}
and
\begin{align}\label{1.8}
\int_{0}^T\int_{0}^\pi\bigg(\theta_t\varphi_t-(c(\theta)\varphi)_xc(\theta)\theta_x-2\theta_t\varphi -u_x\varphi\bigg)\ {\rm d}x{\rm d}t +\int_{0}^T(c^2\varphi\theta_x)\bigg|_{x=0}^{x=\pi}\ {\rm d}t=0,
\end{align}
for any test functions $\psi, \varphi\in\mathcal{F}$, where
\begin{align}\label{1.9}
\mathcal{F}:=\bigg\{f\in C^\infty((0,\pi)\times(0,T)):\ \pa_{t}^i\pa_{x}^jf\bigg|_{t=0,T}=0,\ \ \forall\ i,j=0,1,2\cdots \bigg\},
\end{align}
\begin{align}\label{1.10}
\theta\in C^{1/2}([0,\pi]\times[0,T])\cap L^2([0,T],H^1([0,\pi])),
\end{align}
and
\begin{align}
u&\in L^\infty([0,T],H^1([0,\pi]))\cap L^\infty([0,\pi]\times[0,T]),\label{1.11} \\
u_t&\in L^2([0,T],H^{-1}([0,\pi])); \label{1.12}
\end{align}

(ii) the first and second equations for initial conditions in \eqref{1.2} are satisfied pointwise, and the third equation holds in $L^p$ for $p\in[1,2)$;

(iii) the boundary conditions in \eqref{BC} or in \eqref{BC2} are satisfied in $L^2(0,T)$ sense.}
\end{definition}

We note that the global existence theory for \eqref{1.1}, is base on earlier work of Bressan-Zheng in \cite{Bressan-Zheng} on the variational wave equation. In fact, before considering \eqref{1.1}, a class of simplified 1-d wave models only on $\nb$  were first studied. For example, when $\nb=(\cos \theta, 0,\sin\theta)(x,t)$, $x\in \mathbb{R}$,  the {\em variational wave equation} satisfies
\begin{equation}\label{VW}
\theta_{tt}-c(\theta)\bigl(c(\theta)\theta_x\bigr)_x=0, \quad \theta(x,0)=\theta_0(x)\in H^1,\quad
\theta_t(x,0)=\theta_1(x)\in L^2.
\end{equation}
It is natural to consider the finite energy ($H^1$) initial data. Since $H^1\hookrightarrow C^{1/2}$ in 1-d, which is not Lipschitz, finite time cusp singularity forms even if initial data are smooth  \cite{GHZ}. For the Cauchy problem \eqref{VW}, the existence, uniqueness and Lipschitz continuity of global  $C^{1/2}$ energy conservative solutions  was established by \cite{Bressan-Zheng, HR}, \cite{BCZ2} and \cite{BC,BC2015}, respectively. Later, these results were extended to a wave system with $\nb\in{\mathbb S}^2$ when the Oseen-Frank potential takes its general form in \cite{CZZ12,CCD,CCS, ZZ10, ZZ11}, and another general system in \cite{H}. See existence of dissipative solution of \eqref{VW} for the Cauchy problem at \cite{ZZ03,BH}.

Our existence result on the initial boundary value problem also applies to the variational wave equation \eqref{VW}, which is missing for many years.

However, one cannot easily extend the result for the variational wave equation to  \eqref{1.1} (or \eqref{sys}). This is because $u_x$ has the similar regularity as $\theta_t$, which maybe unbounded. There is no direct method to cope with the variational wave equation with an unbounded source term. The key observation in \cite{CHL20} which helps solving this issue, is to find $J=u_x+\theta_t\in C^\alpha\cap L^2\cap L^\infty$ on $(x,t)$ for \eqref{1.1}.

For the initial-boundary value problem, we still use $J$ to rewrite \eqref{1.1}. Then, for any given $J\in C^\alpha\cap L^2\cap L^\infty$, we first establish the global existence result on $\theta(x,t)\in L^\infty(H^1)$, by solving an initial-boundary value problem of \eqref{VW} with damping and the force term $J$. In this step, we need to change the equation into a semilinear system on characteristic coordinates. This method was first used by Bressan-Zheng in \cite{Bressan-Zheng}, then by Chen-Huang-Liu in \cite{CHL20}, for the Cauchy problem. For the boundary value problem, we need to transform the boundary conditions on $(u, \theta)$ in the original $(x,t)$-coordinates to conditions on $J$ and other new dependent variables under the new coordinates. Here we need some new methods to cope with these additional boundary values (see Subsection \ref{S2.5}).

The second step is to find a fixed point using the heat equation on $J$ for the boundary value problem. The difficulty lies in the fact that the source term of the equation on $J$ is only $H^{-1}$. One cannot directly use the smoothing effect of the heat equation, but need to first use the wave equation to change $\theta_{tt}$ into $\theta_{xx}$ and other lower order terms. Then one can show the enhanced regularity on $J$, since the smoothing effect of heat equation on the source term $\theta_{xx}$ is much better than $\theta_{tt}$. Different from the Cauchy problem \cite{CHL20}, in this initial-boundary value problem, the heat kernel is expressed as an infinite series by using the image method. Finally we used the Duhamel's principle and the Schauder fixed point theorem to prove the existence of a fixed point.

%%%%%%%5
%
%%%%%%%
\subsection{Structure of this paper}
We will first consider the nonslip (Dirichlet) boundary condition \eqref{BC} on velocity $u$, in sections 3 and 4. To cope with different types of boundary conditions on $\theta$, we only need some minor changes in the proof. To avoid a repeat, we propose to only prove the following mixed boundary conditions.
\begin{equation}\label{1.3}
\begin{aligned}
&u(0,t)=u(\pi,t)=0,\\
&\theta(0,t)=\iota \theta(\pi,t)+\theta_x(\pi,t)=0,
\end{aligned}
\end{equation}
where $\iota\geq 0$. This is a special case of \eqref{BC}.
The functions $u_0(x), \theta_0(x)$ and $\theta_1(x)$ satisfy the corresponding compatibility conditions at $0$ and $\pi$
\begin{equation}\label{1.4}
\begin{aligned}
\theta_0(0)=\iota\theta_0(\pi)+\theta_1(\pi)=0,\quad u_0(0)=u_0(\pi)=0.
\end{aligned}\end{equation}

Correspondingly, our results can be stated as follows.
\begin{theorem}
\label{thm}
Assume all conditions in Theorem \ref{thm_0} hold. For any given time $T\in(0,\infty)$,
the initial-boundary value problem \eqref{1.1}-\eqref{1.5} admits a weak solution $(u(x,t), \theta(x,t))$ defined on $[0,\pi]\times[0,T]$ in the sense of Definition \ref{def}. Moreover, the associated energy
\begin{align}\label{1.13}
\mathcal{E}(t):=\fr{1}{2}\int_{0}^\pi(\theta_{t}^2+c^2(\theta)\theta_{x}^2+u^2)(x,t)\ {\rm d}x +B(\theta(\pi,t)),
\end{align}
with $B(\theta(\pi,t))=\int_{0}^{\theta(\pi,t)}\iota c^2(s)s\ {\rm d}s\geq0$, is well-defined for $t\in[0,T]$ and satisfies
\begin{align}\label{1.14}
\mathcal{E}(t)\leq \mathcal{E}(0) -\int_{0}^t\int_{0}^\pi((u_x+\theta_t)^2+\theta_{t}^2)(x,t)\ {\rm d}x{\rm d}t.
\end{align}
\end{theorem}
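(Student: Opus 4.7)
The plan is to follow the two-step program outlined at the end of the introduction: convert the problem into one where $J:=u_x+\theta_t$ is treated as prescribed data, solve for $(u,\theta)$ in terms of $J$, and then recover $J$ as the fixed point of a smoothing map.

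\textbf{Step one: solve for $\theta$ given $J$.} Writing $u_x=J-\theta_t$, the second equation of \eqref{1.1} becomes the damped variational wave equation
\begin{equation*}
\theta_{tt}+\theta_t = c(\theta)\bigl(c(\theta)\theta_x\bigr)_x - J,
\end{equation*}
with H\"older forcing $J\in C^\alpha\cap L^2\cap L^\infty$. Following Bressan--Zheng, I would introduce the Riemann invariants $R=\theta_t+c(\theta)\theta_x$ and $S=\theta_t-c(\theta)\theta_x$ together with the renormalized densities $R^2/(1+R^2)$ and $S^2/(1+S^2)$, and pass to characteristic coordinates $(X,Y)$ in which the system becomes semilinear with a Lipschitz right-hand side, so global existence follows by Picard iteration. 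The boundary conditions \eqref{1.3} translate into reflection rules for $(R,S)$: at $x=0$ the Dirichlet condition $\theta=0$ forces $\theta_t=0$, hence $R=-S$; at $x=\pi$ the Robin condition $\iota\theta+\theta_x=0$ yields an algebraic relation between $R+S$ and $c(\theta)(R-S)$. One runs the iteration in the $(X,Y)$-strip corresponding to $[0,\pi]\times[0,T]$, splicing these reflection rules each time a characteristic hits the image of a boundary, and then pulls the result back to $(x,t)$ to obtain $\theta\in L^\infty([0,T];H^1)\cap C^{1/2}$.

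\textbf{Step two: recover $J$ by a fixed point.} Differentiating $J=u_x+\theta_t$ in $t$, using $u_t=J_x$ from the first equation of \eqref{1.1} and substituting the expression for $\theta_{tt}$ from the second, gives the heat equation
\begin{equation*}
J_t-J_{xx} = c(\theta)\bigl(c(\theta)\theta_x\bigr)_x-J-\theta_t,
\end{equation*}
and the nonslip condition $u(0,t)=u(\pi,t)=0$, combined with $u_t=J_x$, enforces homogeneous Neumann data $J_x(0,t)=J_x(\pi,t)=0$. For each Hölder-continuous candidate $J$, Step one yields $\theta$ and hence the source above; let $\Phi(J)$ denote the unique solution $\tilde J$ of this Neumann heat problem, represented by Duhamel's formula with the image-method series expansion of the Neumann kernel on $[0,\pi]$. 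Although $c(c\theta_x)_x$ is only $H^{-1}$ in $x$, one spatial derivative can be moved onto the kernel at the cost of an integrable $t^{-1/2}$ singularity in time, which upgrades $\tilde J$ to $C^\alpha\cap L^\infty\cap L^2$ and provides the compactness needed for Schauder's fixed point theorem on an appropriate closed convex set.

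\textbf{Step three: verification.} With $(u,\theta)$ in hand, the weak formulations \eqref{1.7}--\eqref{1.8} hold by construction, and the initial and boundary data \eqref{1.3}--\eqref{1.4} are attained in the senses required by Definition \ref{def}. For the energy inequality \eqref{1.14}, I would multiply the second equation of \eqref{1.1} by $\theta_t$ and the first by $u$, integrate over $[0,\pi]$ against a smooth approximating sequence, and integrate by parts; the boundary terms at $x=0$ vanish by the Dirichlet data, while those at $x=\pi$ combine through $\theta_x(\pi,t)=-\iota\theta(\pi,t)$ into exactly $\frac{\mathrm d}{\mathrm dt}B(\theta(\pi,t))$, after which weak-$*$ lower semicontinuity of the energy and the dissipation passes the identity to an inequality in the limit.

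The principal obstacle, as the paper itself emphasizes, is the low regularity of the $J$-source: the naive equation for $J$ contains $\theta_{tt}$, which is only $H^{-1}$, and heat smoothing alone cannot recover $C^\alpha$. The trick inherited from \cite{CHL20} is to use the wave equation to trade $\theta_{tt}$ for the spatial expression $c(c\theta_x)_x$ plus bounded terms, then gain one derivative of the kernel. The secondary difficulty, absent from the Cauchy problem, is that the Neumann heat kernel must be handled as an image-series on $[0,\pi]$ and that the characteristic-coordinate construction in Step one must splice boundary reflections correctly across repeated bounces in $(X,Y)$-space.
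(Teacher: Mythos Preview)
Your overall architecture matches the paper's: solve the damped variational wave equation for $\theta$ given $J$ via characteristic coordinates, recover $J$ as a Schauder fixed point of a Neumann heat problem whose source has been rewritten via the wave equation, then verify the energy inequality.

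There is, however, a genuine gap in Step one. The semilinear system in characteristic coordinates is \emph{not} globally Lipschitz: beyond the bounded angle variables $w=2\arctan R$, $z=2\arctan S$ one must carry the auxiliary densities $p=(1+R^2)/X_x$ and $q=(1+S^2)/(-Y_x)$, and their evolution equations contain the product $pq$ (see \eqref{2.17}). These are not renormalized quantities and can in principle grow, so Picard iteration yields only local existence. To go global on the strip corresponding to $[0,\pi]\times[0,T]$ one needs a priori pointwise bounds on $p$ and $q$, and this is precisely the content of the paper's key lemma (Lemma~1): one applies Green's theorem to $\int p\,dX-q\,dY$ over subregions $\Sigma_i^n$, exploits the specific structure of $p_Y+q_X$, and---crucially for the boundary-value problem---controls the line integrals along $L_0$ and $L_\pi$ using the reflection relations \eqref{2.23}. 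The Robin boundary on $L_\pi$ produces an extra term $\int(p-q)\,dY'$ that must itself be estimated (equations \eqref{2.47b}--\eqref{2.47d}) by invoking the energy bound \eqref{2.76}, and the whole argument is run inductively over the partition $\widetilde\Omega=\bigcup_n\Omega^n$ generated by repeated reflections. This machinery cannot be absorbed into ``splice reflection rules and iterate.''

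A smaller issue concerns Step three. Your proposed derivation of \eqref{1.14} multiplies by $\theta_t$ and $u$ against a ``smooth approximating sequence,'' but for weak solutions that may carry cusp singularities there is no evident smooth approximation compatible with the quasilinear structure. The paper instead establishes the wave-energy inequality \eqref{2.74c} directly in characteristic coordinates---where all quantities are continuous---obtaining an inequality (not an identity) because of the sets $\{\cos w=-1\}$, $\{\cos z=-1\}$ where energy may concentrate; it then combines this with the relation $\int\!\!\int Ju_x\,dx\,dt=-\int\!\!\int u_tu\,dx\,dt$ (from $u_t=J_x$ and the Dirichlet data on $u$) to reach \eqref{1.14}. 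Your route would need a justification that is not obviously available at this regularity.
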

In the proof of Theorem \ref{thm}, we include all necessary techniques to cope with three types of  (Dirichlet, Neumann and Robin)  boundary conditions on $\theta$ included in \eqref{BC}. So all other cases in Theorem \ref{thm_0} with the boundary condition \eqref{BC} can be proved similarly. We leave them to the reader.

In section 5, we will present a proof of Theorem \ref{thm_0} with stress-free (Neumann) boundary condition \eqref{BC2}.

\begin{remark}\em{
The result is still correct for the general Dirichlet and Neumann boundary on $u$ and $\theta$, with for example, $u(0,t)=f_1(t)$, $u(\pi,t)=f_2(t)$, $\theta(0,t)=g_1(t)$ and $\iota\theta(\pi,t)+\theta_x(\pi,t)=g_2(t)$ with absolutely continuous $f_{1,2}(t), g_{2}(t)$ functions and $C^1$ continuous $g_1(t)$ function.
For this case, when considering the global existence of the wave equations, we replace the boundary conditions on $L_0$ and $L_\pi$ in \eqref{2.23} by
\begin{align*}
\begin{array}{ll}
\ds \fr{z}{2}=\arctan[2g_{1}'(t)-\tan\frac w2],\   q-\frac{1+[2g_{1}'(t)-\tan\frac w2]^2}{1+\tan^2\frac w2}q=0, \  &{\rm on}\ L_0, \\[8pt]
\ds \frac w2=\arctan[\tan\frac z2 +2c(\theta)g_2(t)-2\iota c(\theta)\theta],\  p-\frac{1+[\tan\frac z2+2c(\theta)g_2(t)-2\iota c(\theta)\theta]^2}{1+\tan^2\frac z2}q=0, \  &\text{on } L_\pi,
\end{array}
\end{align*}
and replace the boundary energy $B(\theta(\pi,t))$ in \eqref{1.13} by a corresponding form. Moreover,
instead of \eqref{3.1}, \eqref{3.2}, the current initial-boundary value problem for the variable $J=u_x+\theta_t$ is
\begin{align}\label{1.16}
\left\{
\begin{array}{ll}
\ds J_t-J_{xx}=c(\theta)(c(\theta)\theta_x)_x-\theta_t-J, \\
\ds J(x,0)=J_0(x), \\
\ds J_x(0,t)=f_{1}'(t),\ \ J_x(\pi,t)=f_{2}'(t).
\end{array}
\right.
\end{align}
Furthermore, replaced \eqref{3.50}, the initial-boundary value problem for the variable $u$ now is
\begin{align}\label{1.17}
\left\{
\begin{array}{ll}
\ds u_t-u_{xx}=\theta_{tx}, \\
\ds u(x,0)=u_0(x), \\
\ds u(0,t)=f_{1}(t),\ \ u(\pi,t)=f_{2}(t).
\end{array}
\right.
\end{align}
We can introduce some suitable variables for problems \eqref{1.16} and \eqref{1.17} to transform them into homogeneous boundary condition problems, and then use the Green/Neumann functions defined in Section \ref{S3} to express the corresponding weak solutions. The proof is very similar to the cases of the homogeneous boundary conditions \eqref{BC} or \eqref{BC2}.}
\end{remark}

\section{Existence of wave equation for any given $J=u_x+\theta_t$}\label{S2}

In this section, we show the global existence of H\"older continuous solutions to the initial-boundary value problem of the nonlinear wave equation in \eqref{1.1} for any given $J=u_x+\theta_t$.

\subsection{The semilinear system in characteristic coordinates}\label{S2.1}

Let $J=u_x+\theta_t$. The wave equation in \eqref{1.1} reads
\begin{align}\label{2.1}
\theta_{tt}-c(\theta)(c(\theta)\theta_x)_x+\theta_t+J=0.
\end{align}
Denote
\begin{align}\label{2.2}
R:=\theta_t+c(\theta)\theta_x,\quad S:=\theta_t-c(\theta)\theta_x,
\end{align}
so that
\begin{align}\label{2.3}
\theta_t=\fr{R+S}{2},\quad \theta_x=\fr{R-S}{2c(\theta)}.
\end{align}
By \eqref{2.1}, the equations in terms of variables $(R,S)$ are
\begin{align}\label{2.4}
\left\{
\begin{array}{l}
\dps R_t-c(\theta)R_x=\fr{c'(\theta)}{4c(\theta)}(R^2-S^2)-\fr{1}{2}(R+S)-J,\\[8pt]
\dps S_t+c(\theta)S_x=\fr{c'(\theta)}{4c(\theta)}(S^2-R^2)-\fr{1}{2}(R+S)-J.
\end{array}
\right.
\end{align}

Let $(x,t)$ be any point in $[0,\pi]\times[0,\infty)$. We define the forward and backward characteristics $x=x_\pm(s; x,t)(s\leq t)$ passing through the point $(x,t)$ as follows
\begin{align}\label{2.5}
\left\{
\begin{array}{l}
\dps \fr{{\rm d}x_\pm(s;x,t)}{{\rm d}s}=\pm c(\theta(x_\pm(s;x,t),s)), \\
\dps x_\pm(t;x,t)=x.
\end{array}
\right.
\end{align}
We now define the coordinate transformation $(x,t)\rightarrow(X,Y)$ on $[0,\pi]\times[0,\infty)$.
We first specify this transformation to transform the lines $x=0$ and $x=\pi$ with $t\geq0$ into the lines
$Y=X$ with $X\geq0$ and $Y=X-\widetilde{X}$ with $X\geq \widehat{X}$, respectively, where
\begin{align}\label{2.6}
\widetilde{X}=\int_{0}^\pi(1+R_{0}^2(z))\ {\rm d}z-\int_{\pi}^0(1+S_{0}^2(z))\ {\rm d}z,\quad \widehat{X}=\int_{0}^\pi(1+R_{0}^2(z))\ {\rm d}z.
\end{align}
Here
\begin{align}\label{2.7}
R_{0}(x)=\theta_1(x)+c(\theta_0(x))\theta_{0}'(x),\quad S_{0}(x)=\theta_1(x)-c(\theta_0(x))\theta_{0}'(x),\ \ \forall\ x\in[0,\pi].
\end{align}
Moreover, we set that the segment $t=0$ with $x\in[0,\pi]$ is transformed to a piece of curve $\Gamma_0: Y=\phi(X)(X\in[0,\widehat{X}])$ defined through a parametric $x\in[0,\pi]$
\begin{align}\label{2.8}
X=\int_{0}^x(1+R_{0}^2(z))\ {\rm d}z,\quad Y=\int_{x}^0(1+S_{0}^2(z))\ {\rm d}z.
\end{align}
It is observed by the initial data \eqref{1.2} that the two functions $X=X(x)$ and $Y=Y(x)$ with $x\in[0,\pi]$ are well defined and absolutely continuous. Moreover, $X(x)$ is strictly
increasing while $Y(x)$ is strictly decreasing. Hence the function $Y=\phi(X)$ is continuous and strictly
decreasing. Next for any point $(x,t)\in[0,\pi]\times(0,\infty)$, we draw the backward characteristic $x_-(s; x,t)$ up to a point $P_1$ on $x=\pi$, and then draw the forward characteristic $x_+(s; P_1)$ up to a point $P_2$ on $x=0$. Repeating the above process, since the wave speed $c(\theta)\geq C_L$, we can reach the segment $t=0(x\in[0,\pi])$ through finite steps by \eqref{1.6}. Assume that there exists a point $P_l$ on $x=0$ or $x=\pi$ such that the backward characteristic $x_-(s; P_l)$ or the forward characteristic $x_+(s; P_l)$ intersects the segment $t=0(x\in[0,\pi])$ at a point $P^*(x_{P^*},0)$. It is clear that the points $P_i$ are on $x=\pi$ for odd numbers $i\leq l$, and on $x=0$ for even numbers $i\leq l$. See Fig. 2.1 (a) for the illustration. Denote the coordinate of $P_i$ by $(\pi,t_i)$ if $i$ is an odd number, and by $(0,t_i)$ if $i$ is even. The numbers $t_i(i=1,\cdots,l)$ and $x_{P^*}$ can be determined sequentially as follows
\begin{align}\label{2.9}
\left\{
\begin{array}{l}
x_-(t_1;x,t)=\pi,\\
x_+(t_2;P_1)=0,\\
x_-(t_3;P_2)=\pi,\\
\vdots \\
x_+(t_l;P_{l-1})=0,\ \ x_{P^*}=x_-(0;P_l),\ \ \text{ if $l$ is an even number},\\
x_-(t_l;P_{l-1})=\pi,\ \ x_{P^*}=x_+(0;P_l),\ \ \text{ if $l$ is an odd number}.
\end{array}
\right.
\end{align}
Then we define the value of $X(x,t)$ by
\begin{align}\label{2.10}
X(x,t)=&X(P_1)=Y(P_1)+\widetilde{X}=Y(P_2)+\widetilde{X}=X(P_2)+\widetilde{X} \nonumber \\
=&X(P_3)+\widetilde{X}=(Y(P_3)+\widetilde{X})+\widetilde{X} =Y(P_4)+2\widetilde{X} \nonumber \\
=&\cdots=
\left\{
\begin{array}{l}
X(P_l)+k\widetilde{X},\qquad \quad \ l=2k\\
Y(P_l)+(k+1)\widetilde{X},\ \ l=2k+1
\end{array}
\right.
\nonumber \\[6pt]
=&
\left\{
\begin{array}{l}
\dps \int_{0}^{x_{P^*}}(1+R_{0}^2(z))\ {\rm d}z +k\widetilde{X}, \qquad  \ \  l=2k, \\[8pt]
\dps \int_{x_{P^*}}^0(1+S_{0}^2(z))\ {\rm d}z +(k+1)\widetilde{X}, \ \ l=2k+1.
\end{array}
\right.
\end{align}

\begin{figure}[htbp]
\begin{center}
\begin{minipage}[t]{0.38\textwidth}
\includegraphics[scale=0.58]{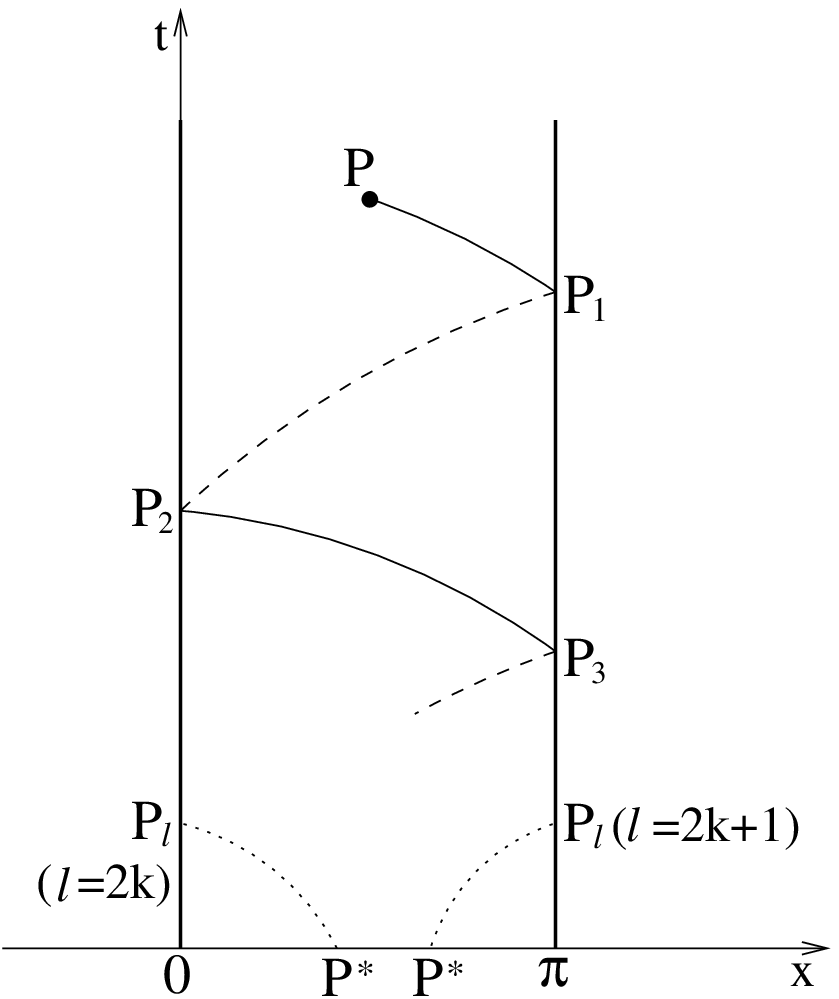}
{\center \qquad  \qquad \qquad   (a)}
\end{minipage}
\begin{minipage}[t]{0.38\textwidth}
\includegraphics[scale=0.58]{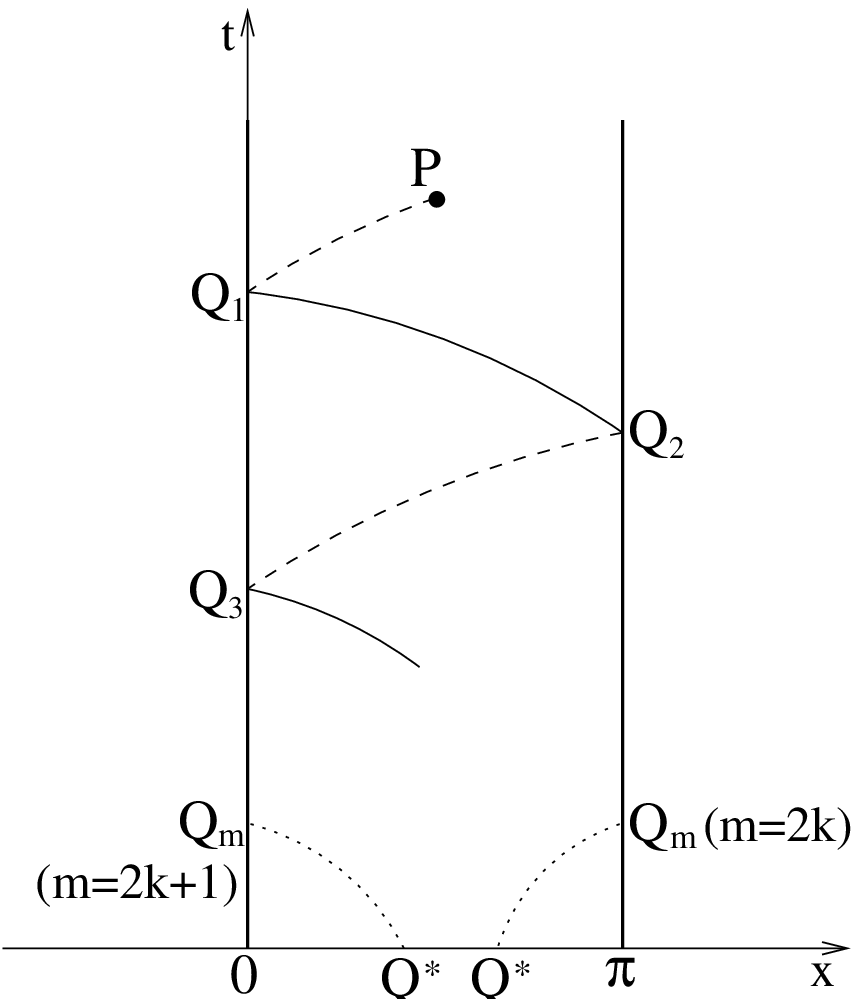}
{\center \qquad  \qquad \qquad  (b)}
\end{minipage}
\caption{Characteristic curves. }
\end{center}
\end{figure}

To define the value of $Y(x,t)$, we draw the forward characteristic $x_+(s; x,t)$ up to a point $Q_1$ on $x=0$, and then draw the backward characteristic $x_-(s; Q_1)$ up to a point $Q_2$ on $x=\pi$. Similarly, we can reach the segment $t=0(x\in[0,\pi])$ through finite steps by repeating the above process.
Assume that there exists a point $Q_m$ on $x=0$ or $x=\pi$ such that the backward characteristic $x_-(s; Q_m)$ or the forward characteristic $x_+(s; Q_m)$ intersects the segment $t=0(x\in[0,\pi])$ at a point $Q^*(x_{Q^*},0)$. Obviously, the points $Q_i$ are on $x=0$ for odd numbers $i\leq m$, and on $x=\pi$ for even numbers $i\leq m$. See Fig. 2.1 (b) for the illustration. Denote the coordinate of $Q_i$ by $(0,\tilde{t}_i)$ if $i$ is an odd number, and by $(\pi,\tilde{t}_i)$ if $i$ is even. We can determine
the numbers $\tilde{t}_i(i=1,\cdots,m)$ and $x_{Q^*}$ sequentially
\begin{align}\label{2.11}
\left\{
\begin{array}{l}
x_+(\tilde{t}_1;x,t)=0,\\
x_-(\tilde{t}_2;Q_1)=\pi,\\
x_+(\tilde{t}_3;Q_2)=0,\\
\vdots \\
x_-(\tilde{t}_m;Q_{m-1})=\pi,\ \ x_{Q^*}=x_+(0;Q_m),\ \ \text{ if $m$ is an even number},\\
x_+(\tilde{t}_m;Q_{m-1})=0,\ \ x_{Q^*}=x_-(0;Q_m),\ \  \text{ if $m$ is an odd number}.
\end{array}
\right.
\end{align}
Thus the value of $Y(x,t)$ can be defined as follows
\begin{align}\label{2.12}
Y(x,t)=&Y(Q_1)=X(Q_1)=X(Q_2)=Y(Q_2)+\widetilde{X}=Y(Q_3)+\widetilde{X} \nonumber \\
=&X(Q_3)+\widetilde{X} =X(Q_4)+\widetilde{X}=(Y(Q_4)+\widetilde{X})+ \widetilde{X}=Y(Q_4)+2\widetilde{X} \nonumber \\
=&\cdots=
\left\{
\begin{array}{l}
Y(Q_m)+k\widetilde{X},\ \ m=2k\\
X(Q_m)+k\widetilde{X},\ \ m=2k+1
\end{array}
\right.
\nonumber \\[6pt]
=&
\left\{
\begin{array}{l}
\dps \int_{x_{Q^*}}^0(1+S_{0}^2(z))\ {\rm d}z +k\widetilde{X}, \ \ \ \ m=2k, \\[12pt]
\dps \int_{0}^{x_{Q^*}}(1+R_{0}^2(z))\ {\rm d}z +k\widetilde{X}, \ \  m=2k+1.
\end{array}
\right.
\end{align}
It is easy to note by \eqref{2.9} and \eqref{2.11} that for the point $(x,t)$ on $t=0(x\in[0,\pi])$, the transformation defined in \eqref{2.10}, \eqref{2.12} reduces to \eqref{2.8}. Furthermore, according to the construction process of the transformation $(x,t)\rightarrow(X,Y)$, we see that $X$ and $Y$ are constants along backward and forward characteristic, respectively; that is,
\begin{align}\label{2.13}
X_t-c(\theta)X_x=0,\quad Y_t+c(\theta)Y_x=0,
\end{align}
from which one has for any smooth function $f$
\begin{align}\label{2.14}
\begin{array}{l}
f_t+c(\theta)f_x=(f_XX_t+f_YY_t)+c(\theta)(f_XX_x+f_YY_x)=2cX_xf_X, \\
f_t-c(\theta)f_x=(f_XX_t+f_YY_t)-c(\theta)(f_XX_x+f_YY_x)=-2cY_xf_Y.
\end{array}
\end{align}

For convenience to deal with possibly unbounded values of $R$ and $S$, one can introduce the variables
\begin{align}\label{2.15}
w:=2\arctan R,\quad z:=2\arctan S.
\end{align}
In order to complete the system, we further introduce two key dependent variables
\begin{align}\label{2.16}
p:=\fr{1+R^2}{X_x},\quad q:=\fr{1+S^2}{-Y_x}.
\end{align}
Then by summing \eqref{2.4}, \eqref{2.13}-\eqref{2.16}, we acquire a semilinear hyperbolic system
with smooth coefficients for the variables $\theta, w, z, p, q$ in terms of the coordinates $(X,Y)$
\begin{align}\label{2.17}
\begin{array}{l}
\dps \theta_X=\fr{\sin w}{4c}p,\quad \theta_Y=\fr{\sin z}{4c}q, \\[8pt]
\dps w_Y=\fr{q}{4c}\bigg\{\fr{c'}{c}\bigg(\cos^2\fr{z}{2}-\cos^2\fr{w}{2}\bigg)-\sin w\cos^2\fr{z}{2} -\sin z\cos^2\fr{w}{2} -4J\cos^2\fr{w}{2}\cos^2\fr{z}{2}\bigg\}, \\[8pt]
\dps z_X=\fr{p}{4c}\bigg\{\fr{c'}{c}\bigg(\cos^2\fr{w}{2}-\cos^2\fr{z}{2}\bigg)-\sin w\cos^2\fr{z}{2} -\sin z\cos^2\fr{w}{2} -4J\cos^2\fr{w}{2}\cos^2\fr{z}{2}\bigg\}, \\[8pt]
\dps p_Y=\fr{pq}{2c}\bigg\{\fr{c'}{4c}(\sin z-\sin w)-\fr{1}{4}\sin w\sin z -\sin^2\fr{w}{2}\cos^2\fr{z}{2} -J\sin w\cos^2\fr{z}{2}\bigg\},
\\[8pt]
\dps q_X=\fr{pq}{2c}\bigg\{\fr{c'}{4c}(\sin w-\sin z)-\fr{1}{4}\sin w\sin z -\sin^2\fr{z}{2}\cos^2\fr{w}{2} -J\sin z\cos^2\fr{w}{2}\bigg\}.
\end{array}
\end{align}
The detailed derivation of \eqref{2.17} can be found in Chen-Huang-Liu \cite{CHL20}. In addition, one also has
\begin{align}\label{2.18}
\left\{
\begin{array}{l}
\dps x_X=\fr{1}{2X_x}=\fr{1+\cos w}{4}p, \\[8pt]
\dps x_Y=\fr{1}{2Y_x}=-\fr{1+\cos z}{4}q,
\end{array}
\right.
\quad
\left\{
\begin{array}{l}
\dps t_X=\fr{1}{2cX_t}=\fr{1+\cos w}{4c}p, \\[8pt]
\dps t_Y=-\fr{1}{2cY_t}=\fr{1+\cos z}{4c}q,
\end{array}
\right.
\end{align}
which can be achieved by setting $f=x$ and $f=t$ in \eqref{2.14}. It suggests by \eqref{2.18} that
\begin{align}\label{2.19}
{\rm d}x{\rm d}t=\fr{pq}{2c}\cos^2\fr{w}{2}\cos^2\fr{z}{2}\ {\rm d}X{\rm d}Y.
\end{align}

\subsection{The boundary value problem in the $(X,Y)$ coordinates}\label{S2.2}

According to the construction of the coordinate transformation $(x,t)\rightarrow(X,Y)$, we know that
the segment $t=0(x\in[0,\pi])$ is transformed into a piece of continuous and strictly
decreasing curve $\Gamma_0: Y=\phi(X)(X\in[0,\widehat{X}])$, which defines in \eqref{2.8} by a parametric $x\in[0,\pi]$. Moreover, the lines $x=0(t\geq0)$ and $x=\pi(t\geq0)$ are transformed into the lines $L_0: Y=X(X\geq0)$ and $L_\pi: Y=X-\widetilde{X}(X\geq \widehat{X})$, respectively. See Fig. 2.2.

\begin{figure}[htbp]
\centering
\includegraphics[scale=0.58]{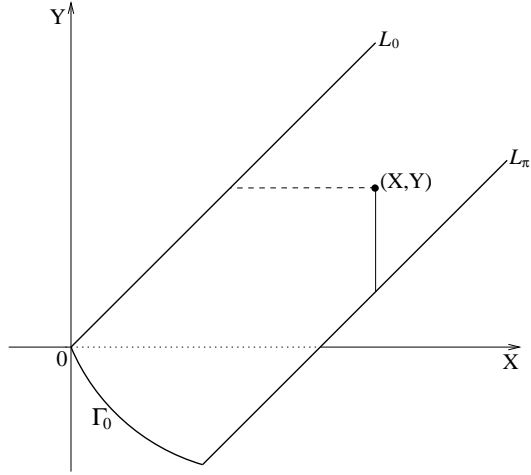}
\caption{The region in the $(x,Y)$ plane.}
\end{figure}

Since $\Gamma_0$ is parameterized by the parameter $x$, we can thus assign the
boundary data $(\bar{\theta}, \bar{w}, \bar{z}, \bar{p}, \bar{q})\in L^\infty$ defined by
\begin{align}\label{2.20}
\bar{\theta}=\theta_0(x),\ \bar{w}=2\arctan R_0(x),\ \bar{z}=2\arctan S_0(x),\ \bar{p}=1,\ \bar{q}=1,
\end{align}
where $R_0(x)$ and $S_0(x)$ are given in \eqref{2.7} and the boundary values $(p,q)$ come from \eqref{2.16}. Moreover, recalling \eqref{1.3} yields $\theta_t=0$ on $x=0$, which implies by \eqref{2.3} and \eqref{2.15} that $R+S=0$ and $w+z=0$ on $x=0$. On $x=\pi$, $\theta_x=-\iota\theta$ leads to $R-S=\tan\frac w2-\tan\frac z2=-2\iota c(\theta)\theta$.

Hence the boundary values of $(w,z)$ satisfy
\begin{align}\label{2.21}
w+z=0, \ \ {\rm on}\ L_0, \quad \frac w2=\arctan[\tan\frac z2-2\iota c(\theta)\theta], \ \ \text{ on } L_\pi.
\end{align}
Furthermore, in view of \eqref{2.18}, we see that
$$
{\rm d}x=x_X{\rm d}X+x_Y{\rm d}Y=\fr{1+\cos w}{4}p{\rm d}X- \fr{1+\cos z}{4}q{\rm d}Y,
$$
which indicates by \eqref{2.21} and the definitions of $L_0, L_\pi$ that
$$
0=\fr{1+\cos w}{4}(p-q){\rm d}X, \ \ {\rm on}\ \  L_0, \quad 0=[(1+\cos w)p-(1+\cos z)q]{\rm d}X, \ \ {\rm on}\ \  L_\pi.
$$
Thus the boundary values of $(p,q)$ satisfy
\begin{align}\label{2.22}
p-q=0, \ \ {\rm on}\ L_0, \quad p-\frac{1+(\tan\frac z2-2\iota c(\theta)\theta)^2}{1+\tan^2\frac z2}q=0, \text{ on } L_\pi.
\end{align}
Summing up \eqref{2.20}, \eqref{2.21} and \eqref{2.22}, the new boundary value problem in the $(X,Y)$ coordinate plane is the semilinear system \eqref{2.17} supplemented with
\begin{align}\label{2.23}
\begin{array}{ll}
(\theta, w, z, p, q)=(\bar{\theta}, \bar{w}, \bar{z}, \bar{p}, \bar{q}),  \ \ &{\rm on}\ \Gamma_0, \\
w+z=0,\ \  p-q=0, \ \ &{\rm on}\ L_0, \\
\ds \frac w2=\arctan[\tan\frac z2-2\iota c(\theta)\theta],\ \ p-\frac{1+(\tan\frac z2-2\iota c(\theta)\theta)^2}{1+\tan^2\frac z2}q=0, \ \ &\text{on } L_\pi
\end{array}
\end{align}
We use $\Omega$ to denote the region bounded by $t=0, x=0$ and $x=\pi$ in the $(x,t)$ plane, and use $\widetilde{\Omega}$ to represent its image in the $(X,Y)$ plane which is bounded by $\Gamma_0, L_0$ and $L_\pi$.

\subsection{Local existence of the boundary value problem to the semilinear system}
\label{S2.3}

We use the level lines of $X$ and $Y$ to divide the region $\widetilde{\Omega}$ into a series of subregions
\[
\widetilde{\Omega}=\bigcup_{n=0}^\infty\Omega^{n},
\]
where $\Omega^0=\Omega_{1}^0\cup\Omega_{2}^0\cup\Omega_{3}^0$ with
\begin{align*}
\begin{array}{l}
\Omega_{1}^0=\{(X,Y):\ 0<X\leq \widehat{X},\ 0<Y\leq X\},\\
\Omega_{2}^0=\{(X,Y):\ 0\leq X\leq \widehat{X},\ \phi(X)\leq Y\leq 0\},\\
\Omega_{3}^0=\{(X,Y):\ \widehat{X}<X\leq \widetilde{X},\ X-\widetilde{X} \leq Y\leq 0\},
\end{array}
\end{align*}
and $\Omega^n=\Omega_{1}^n\cup\Omega_{2}^n\cup\Omega_{3}^n$ with
\begin{align*}
\begin{array}{l}
\Omega_{1}^n=\{(X,Y):\ \widehat{X}+k\widetilde{X}<X\leq (k+1)\widetilde{X},\ \widehat{X}+k\widetilde{X}<Y\leq X\},\\
\Omega_{2}^n=\{(X,Y):\ \widehat{X}+k\widetilde{X}<X\leq (k+1)\widetilde{X},\ k\widetilde{X}<Y\leq \widehat{X}+k\widetilde{X}\},\\
\Omega_{3}^n=\{(X,Y):\ (k+1)\widetilde{X}<X\leq \widehat{X}+(k+1)\widetilde{X},\ X-\widetilde{X}\leq Y\leq \widehat{X}+k\widetilde{X}\},
\end{array}
\end{align*}
for $n=2k+1(k=0,1,2,\cdots)$, and with
\begin{align*}
\begin{array}{l}
\Omega_{1}^n=\{(X,Y):\ k\widetilde{X}<X\leq \widehat{X} +k\widetilde{X},\ k\widetilde{X}<Y\leq X\},\\
\Omega_{2}^n=\{(X,Y):\ k\widetilde{X}<X\leq \widehat{X}+k\widetilde{X},\ \widehat{X}+(k-1)\widetilde{X}<Y\leq k\widetilde{X}\},\\
\Omega_{3}^n=\{(X,Y):\ \widehat{X}+k\widetilde{X}<X\leq (k+1)\widetilde{X},\ X-\widetilde{X}\leq Y\leq k\widetilde{X}\},
\end{array}
\end{align*}
for $n=2k(k=1,2,3,\cdots)$. See Fig. 2.3 for the illustration.

\begin{figure}[htbp]
\centering
\includegraphics[scale=0.58]{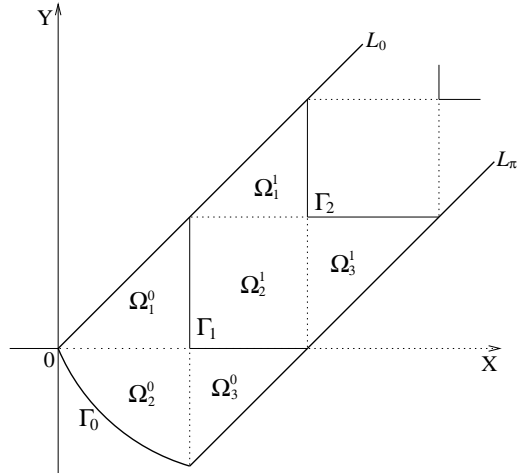}
\caption{The region $\Omega_{i}^n$.}
\end{figure}

According to the local existence result of the Cauchy problem in \cite{CHL20}, we know that there exists a small positive number $\delta_2<\min\{\widehat{X}, -\phi(\widehat{X})\}$ such that the problem \eqref{2.17} with $(\theta, w, z, p, q)|_{\Gamma_0}=(\bar{\theta}, \bar{w}, \bar{z}, \bar{p}, \bar{q})$ admits a local solution $(\theta, w, z, p, q)(X,Y)$ on $\Omega_{2\delta_2}^0$, where
\begin{align}\label{2.24}
\Omega_{2\delta_2}^0=\{(X,Y)\in\Omega_{2}^0:\ {\rm dist}((X,Y),\Gamma_0)\leq\delta_2\}.
\end{align}
We next only discuss the local existence of solutions in the region $\Omega_{1}^0$ near point $(0,0)$, the local existence result in the region $\Omega_{3}^0$ near point $(\widehat{X},\phi(\widehat{X}))$ can be obtained similarly.

Denote $\Omega_{1\delta_2}^0=\Omega_{1}^0\cap\{X<\delta_2\}$. By means of the solution on $Y=0(X\in[0,\delta_2])$, we can use \eqref{2.17}, \eqref{2.18} and \eqref{2.23} to construct a map
for any point $(X,Y)\in\Omega_{1\delta_2}^0$,
\begin{align}\label{2.25}
(\hat{\theta}, \hat{w}, \hat{z}, \hat{p}, \hat{q})=\mathcal{T}_1(\theta, w, z, p, q),
\end{align}
where
\begin{align}\label{2.26}
\hat{\theta}(X,Y)=\theta(X,0)+\int_{0}^{Y}\fr{\sin z}{4c}q(X,Y')\ {\rm d}Y',
\end{align}
\begin{align}\label{2.27}
\hat{w}(X,Y)=&w(X,0)+\int_{0}^{Y}\fr{q}{4c}\bigg\{\fr{c'}{c}\bigg(\cos^2\fr{z}{2}-\cos^2\fr{w}{2}\bigg)-\sin w\cos^2\fr{z}{2} \nonumber \\
&-\sin z\cos^2\fr{w}{2} -4J(x_m,t_m)\cos^2\fr{w}{2}\cos^2\fr{z}{2}\bigg\}(X,Y')\ {\rm d}Y',
\end{align}
\begin{align}\label{2.28}
\hat{z}(X,Y)=&-w(Y,0)-\int_{0}^{Y}\fr{q}{4c}\bigg\{\fr{c'}{c}\bigg(\cos^2\fr{z}{2}-\cos^2\fr{w}{2}\bigg)-\sin w\cos^2\fr{z}{2}-\sin z\cos^2\fr{w}{2} \nonumber \\
&-4J(x_m,t_m)\cos^2\fr{w}{2}\cos^2\fr{z}{2}\bigg\}(Y,Y')\ {\rm d}Y' + \int_{Y}^{X}\fr{p}{4c}\bigg\{\fr{c'}{c}\bigg(\cos^2\fr{w}{2}-\cos^2\fr{z}{2}\bigg) \nonumber \\
& -\sin w\cos^2\fr{z}{2}-\sin z\cos^2\fr{w}{2} -4J(x_n,t_n)\cos^2\fr{w}{2}\cos^2\fr{z}{2}\bigg\} (X',Y)\ {\rm d}X',
\end{align}
\begin{align}\label{2.29}
\hat{p}(X,Y)=&p(X,0)+\int_{0}^{Y}\fr{pq}{2c}\bigg\{\fr{c'}{4c}(\sin z-\sin w)-\fr{1}{4}\sin w\sin z -\sin^2\fr{w}{2}\cos^2\fr{z}{2} \nonumber \\
&-J(x_m,t_m)\sin w\cos^2\fr{z}{2}\bigg\}(X,Y')\ {\rm d}Y',
\end{align}
and
\begin{align}\label{2.30}
\hat{q}(X,Y)=&p(Y,0)+\int_{0}^{Y}\fr{pq}{2c}\bigg\{\fr{c'}{4c}(\sin z-\sin w)-\fr{1}{4}\sin w\sin z -\sin^2\fr{w}{2}\cos^2\fr{z}{2} \nonumber \\
&-J(x_m,t_m)\sin w\cos^2\fr{z}{2}\bigg\}(Y,Y')\ {\rm d}Y' +\int_{Y}^{X} \fr{pq}{2c}\bigg\{\fr{c'}{4c}(\sin w-\sin z) \nonumber \\
&-\fr{1}{4}\sin w\sin z -\sin^2\fr{z}{2}\cos^2\fr{w}{2} -J(x_n,t_n)\sin z\cos^2\fr{w}{2}\bigg\}(X',Y)\ {\rm d}X'.
\end{align}
The points $(x_m,t_m)$ and $(x_n,t_n)$ are defined as follows
\begin{align}\label{2.31}
x_m(Z,Y')=\int_{Y'}^Z-\fr{1+\cos w}{4}p(X',Y')\ {\rm d}X',\ \ (Z=X,Y),
\end{align}
\begin{align}\label{2.32}
t_m(Z,Y')=&t(Y',\phi(Y'))+\int_{\phi(Y')}^{Y'}\fr{1+\cos z}{4c}q(Y',Y'')\ {\rm d}Y'' \nonumber \\
&+\int_{Y'}^Z-\fr{1+\cos w}{4c}p(X',Y')\ {\rm d}X',\ \ (Z=X,Y),
\end{align}
\begin{align}\label{2.33}
x_n(X',Y)=x(X',\phi(X'))+\int_{\phi(X')}^{Y}-\fr{1+\cos z}{4}q(X',Y')\ {\rm d}Y',
\end{align}
and
\begin{align}\label{2.34}
t_n(X',Y)=t(X',\phi(X'))+\int_{\phi(X')}^{Y}\fr{1+\cos z}{4c}q(X',Y')\ {\rm d}Y'.
\end{align}
It is noticed that $(x_m,t_m)(Z,Y')$ and $(x_n,t_n)(X',Y)$ have finite partial derivatives in $Z$ and $Y$, respectively.

We denote
$$
V=(\theta, w, z, p, q),\quad \hat{V}=(\hat{\theta}, \hat{w}, \hat{z}, \hat{p}, \hat{q}),
$$
and
$$
V_0(X)=(\theta, w, z, p, q)(X, 0),\ \ \forall\ X\in[0,\delta_2].
$$
Note that the functions $(\theta, w, z, p, q)(X, 0)$ are achieved by solving the problem in $\Omega_{2\delta_2}^0$. Due to the result in \cite{CHL20}, one knows that the vector function $V_0(X)$ is $C^\alpha$ continuous.
It is obvious that
\begin{align}\label{2.35}
\hat{\theta}(X,0)=\theta(X,0),\ \ \hat{w}(X,0)=w(X,0),\ \ \hat{p}(X,0)=p(X,0),
\end{align}
and
\begin{align}\label{2.36}
\hat{z}(X,0)=&-w(0,0)+ \int_{0}^{X}\fr{p}{4c}\bigg\{\fr{c'}{c}\bigg(\cos^2\fr{w}{2}-\cos^2\fr{z}{2}\bigg) \nonumber \\
& -\sin w\cos^2\fr{z}{2}-\sin z\cos^2\fr{w}{2} -4J(x_n,t_n)\cos^2\fr{w}{2}\cos^2\fr{z}{2}\bigg\} (X',0)\ {\rm d}X' \nonumber \\
=& z(0,0)+ \int_{0}^{X}\fr{p}{4c}\bigg\{\fr{c'}{c}\bigg(\cos^2\fr{w}{2}-\cos^2\fr{z}{2}\bigg) -\sin w\cos^2\fr{z}{2}\nonumber \\
&-\sin z\cos^2\fr{w}{2} -4J(x_n,t_n)\cos^2\fr{w}{2}\cos^2\fr{z}{2}\bigg\} (X',0)\ {\rm d}X'=z(X,0).
\end{align}
The relation $\hat{q}(X,0)=q(X,0)$ can be checked similarly by \eqref{2.30}.

Let $K_1$ be a sufficiently large positive constant that only depends on $\|V_0(X)\|_{C^\alpha}$. Set
\begin{align}\label{2.37}
\mathcal{K}_1=\bigg\{V\ \big|\ &\|V(X,Y)\|_{C^\alpha(\Omega_{1\delta_1}^0)}\leq K_1,\ \ (w+z)(X,X)=0, \nonumber \\
& (p-q)(X,X)=0,\ \ V(X,0)=V_0(X)\bigg\},
\end{align}
where $\delta_1\leq\delta_2$ is a small positive constant and
\begin{align}\label{2.38}
\Omega_{1\delta_1}^0=\big\{(X,Y)\in\Omega_{1\delta_2}^0:\ {\rm dist}((X,Y),(0,0))\leq \delta_1\big\}.
\end{align}
It is easy to see that $\hat{V}(X,0)=V_0(X)$ and $\mathcal{K}_1$ is a compact set in $C^0(\Omega_{1\delta_1}^0)$ space. To apply the Schauder fixed point theorem, we only need to show that the map $\mathcal{T}_1$ is continuous under $C^0$ norm and maps $\mathcal{K}_1$ to itself. By selecting appropriate constants $K_1$ and then $\delta_1$, these properties can be checked based on the facts that $J(x,t)$ is $C^\alpha$ continuous in $(x,t)$ and $(x_m, t_m)(Z,Y')$ and $(x_n,t_n)(X',Y)$ have finite partial derivatives with respect to $Z, Y$. We omit the proof since it is entirely similar to that in \cite{CHL20}.

\subsection{Inverse transformation}\label{S2.4}

We recall \eqref{2.17} and \eqref{2.18} to calculate
\begin{align}\label{2.39}
\bigg(\fr{1+\cos w}{4}p\bigg)_Y=\bigg(-\fr{1+\cos z}{4}q\bigg)_X, \quad \bigg(\fr{1+\cos w}{4c}p\bigg)_Y=\bigg(\fr{1+\cos z}{4c}q\bigg)_X,
\end{align}
which mean that
\begin{align}\label{2.40}
x_{XY}=x_{YX},\quad t_{XY}=t_{YX}.
\end{align}
Thus two $x$ equations and two $t$ equations in \eqref{2.18} are equivalent, respectively. Therefore, we have by \eqref{2.31}-\eqref{2.34}
$$
(x_m(X,Y),t_m(X,Y))=(x_n(X,Y),t_n(X,Y))=:(x,t).
$$
Moreover, if the solution exists, we can define the functions $(x(X,Y),t(X,Y))$ based on the region where $(X,Y)$ is located. For example, if $(X,Y)\in\Omega_{1}^2$, then
\begin{align}\label{2.41}
x(X,Y)=\int_{Y}^X\fr{1+\cos w}{4}p(X',Y)\ {\rm d}X' =\pi+\int_{Y+\bar{X}}^Y-\fr{1+\cos z}{4}q(X,Y')\ {\rm d}Y',
\end{align}
and
\begin{align}\label{2.42}
t(X,Y)=&\int_{\phi(Y-\widetilde{X})}^{Y-\widetilde{X}}\fr{1+\cos z}{4c}q(Y-\bar{X}, Y')\ {\rm d}Y' +\int_{Y-\widetilde{X}}^{Y}\fr{1+\cos w}{4c}p(X', Y-\widetilde{X})\ {\rm d}X' \nonumber \\
&+\int_{Y-\widetilde{X}}^{Y}\fr{1+\cos z}{4c}q(Y, Y')\ {\rm d}Y' +\int_{Y}^X\fr{1+\cos w}{4c}p(X', Y)\ {\rm d}X'  \nonumber \\
=& \int_{\phi(X-\widetilde{X})}^{X-\widetilde{X}}\fr{1+\cos z}{4c}q(X-\widetilde{X}, Y')\ {\rm d}Y' +\int_{X-\widetilde{X}}^{X}\fr{1+\cos w}{4c}p(X', X-\widetilde{X})\ {\rm d}X' \nonumber \\
&+\int_{X-\widetilde{X}}^Y\fr{1+\cos z}{4c}q(X, Y')\ {\rm d}Y'.
\end{align}
We point out that the map from $(X,Y)$ to $(x,t)$ constructed above may not be one-to-one
mapping. But the values of $\theta$ do not depend on the choice of $(X, Y)$, that is, if $x(X_1,Y_1)=x(X_2,Y_2)$ and $t(X_1,Y_1)=t(X_2,Y_2)$ for two points $(X_1,Y_1)$ and $(X_2,Y_2)$ in $\widetilde{\Omega}$, we have
$$
\theta(x(X_1,Y_1),t(X_1,Y_1))=\theta(x(X_2,Y_2),t(X_2,Y_2)).
$$
To show this assertion, we divide the proof into two cases: Case 1. $X_1\leq X_2, Y_1\leq Y_2$ and Case 2. $X_1\leq X_2, Y_1\geq Y_2$. The proof for Case 2 is identical to that in Bressan and Zheng \cite{Bressan-Zheng}. For Case 1, if $x(X_1,Y_1)=x(X_2,Y_2)=x^*\in(0,\pi)$, as in \cite{Bressan-Zheng}, one considers the set
$$
D_{X^*}:=\{(X,Y):\ x(X,Y)\leq x^*\}
$$
and denote by $\partial D_{X^*}$ its boundary. Due to the facts that $x$ is increasing with $X$ and decreasing with $Y$, this boundary is a Lipschitz continuous curve in $\Omega$. Hence we can
construct a Lipschitz continuous curve $\gamma_1$ connecting points $(X_1, Y_1)$ and $(X_2, Y_2)$, which
consists a horizontal segment $Y\equiv Y_1$, $\partial D_{X^*}$ and a vertical segment $X\equiv X_2$. On $\gamma_1$, there hold $x(X,Y)\equiv x(X_1,Y_1)$ and $t(X,Y)\equiv t(X_1,Y_1)$ by \eqref{2.18}. Thus we find that
$$
\fr{1+\cos w}{4}p\ {\rm d}X=\fr{1+\cos z}{4}q\ {\rm d}Y=0,
$$
along $\gamma_1$, which imply by \eqref{2.17} that
$$
\theta(x(X_2,Y_2),t(X_2,Y_2))-\theta(x(X_1,Y_1),t(X_1,Y_1))=\int_{\gamma_1}\fr{\sin w}{4c}p\ {\rm d}X +\fr{\sin z}{4c}q\ {\rm d}Y=0.
$$
If $x(X_1,Y_1)=x(X_2,Y_2)=0$ or $\pi$, we then use the line $L_0$ or $L_\pi$ to replace the boundary $\partial D_{X^*}$ as before. For instance, if $x(X_1,Y_1)=x(X_2,Y_2)=\pi$, a Lipschitz continuous curve $\gamma_2$ connecting points $(X_1, Y_1)$ and $(X_2, Y_2)$ can be constructed by a horizontal segment $Y\equiv Y_1$, $L_\pi$ and a vertical segment $X\equiv X_2$. Due to $t(X_1, Y_1)=t(X_2, Y_2)$, one has by \eqref{2.18} and the boundary condition $(1+\cos w)p=(1+\cos z)q$ on $L_\pi$
\begin{align*}
0=&t(X_2, Y_2)-t(X_1, Y_1)=t(X_2,X_2-\widetilde{X})-t(Y_1+\widetilde{X},Y_1) \\ =&\int_{(Y_1+\widetilde{X},Y_1)}^{(X_2,X_2-\widetilde{X})} \fr{(1+\cos w)p}{4c}\ {\rm d}X +\fr{(1+\cos z)q}{4c}\ {\rm d}Y \\
=& \int_{Y_1+\widetilde{X}}^{X_2} \fr{(1+\cos w)p}{2c}(X,X-\widetilde{X})\ {\rm d}X,
\end{align*}
which implies that $(1+\cos w)p(X,X-\widetilde{X})=(1+\cos z)q(X,X-\widetilde{X})=0$ for $X\in[Y_1+\widetilde{X}, X_2]$. Thus we utilize the equations in \eqref{2.17} to get along $\gamma_2$
\begin{align*}
&\theta(x(X_2,Y_2),t(X_2,Y_2))-\theta(x(X_1,Y_1),t(X_1,Y_1)) \\ =&\theta(\pi,t(X_2,X_2-\widetilde{X}))-\theta(\pi,t(Y_1+\widetilde{X},Y_1))  \\
=& \int_{(Y_1+\widetilde{X},Y_1)}^{(X_2,X_2-\widetilde{X})} \fr{\sin w p}{4c}\ {\rm d}X +\fr{\sin z q}{4c}\ {\rm d}Y=0.
\end{align*}

In addition, one also has
\begin{align}\label{2.73a}
{\rm d}x{\rm d}t=\fr{pq}{2c(1+R^2)(1+S^2)}{\rm d}X{\rm d}Y=\fr{pq}{2c}\cos^2\fr{w}{2}\cos^2\fr{z}{2}{\rm d}X{\rm d}Y.
\end{align}

We now show that the estimate
\begin{align}\label{2.73}
E(t)=\int_0^\pi (\theta_{t}^2+c^2(\theta)\theta_{x}^2)(x,t)\ {\rm d}x +2B(\theta(\pi,t))\leq C_E,
\end{align}
for $t\in[0,\widetilde{T}]$, where $C_E$ is a positive constant depending on $E(0)$ and $J$, $\widetilde{T}$ is the existence time of solution, and $B(\theta(\pi,t))$ is the boundary energy defined in \eqref{1.13}. Let $\Gamma_t\subset\widetilde{\Omega}_{\widetilde{T}}$ be the transformation of the horizontal segment of $t$ with $x\in[0,\pi]$ in the $(x,t)$ plane, where $\widetilde{\Omega}_{\widetilde{T}}$ is the corresponding region of $\Omega_{\widetilde{T}}=[0,\pi]\times[0,\widetilde{T}]$ in the $(X,Y)$ plane. We use $(X_0, X_0)$ and $(X_\pi , X_\pi -\widetilde{X})$ to represent the  coordinates of the intersection points of $\Gamma_t$ and the lines $L_0$, $L_\pi$, respectively. Denote $\Omega_t=[0,\pi]\times[0,t]$ and $\widetilde{\Omega}_t$ the corresponding region of $\Omega_t$ in the $(X,Y)$ plane.
Then one has
\begin{align}\label{2.74}
&\int_0^\pi (\theta_{t}^2+c^2(\theta)\theta_{x}^2)(x,t)\ {\rm d}x =\int_{\Gamma_t\cap\{\cos w\neq-1\}}\fr{1-\cos w}{4}p\ {\rm d}X-\int_{\Gamma_t\cap\{\cos z\neq-1\}}\fr{1-\cos z}{4}q\ {\rm d}Y \nonumber \\
\leq & \int_{\Gamma_t}\fr{1-\cos w}{4}p\ {\rm d}X-\fr{1-\cos z}{4}q\ {\rm d}Y \nonumber \\
=& \bigg\{
\int_{\Gamma_0} + \int_{(\widehat{X},\phi(\widehat{X}))}^{(X_\pi ,X_\pi -\widetilde{X})} - \int_{(0,0)}^{(X_0,X_0)}\bigg\}\fr{1-\cos w}{4}p\ {\rm d}X-\fr{1-\cos z}{4}q\ {\rm d}Y   \nonumber \\
&  -\iint_{\widetilde{\Omega}_t}\fr{pq}{4c}\bigg(\sin\fr{w}{2}\cos\fr{z}{2}+\sin\fr{z}{2}\cos\fr{w}{2}\bigg)^2\ {\rm d}X{\rm d}Y \nonumber \\
& -\iint_{\widetilde{\Omega}_t}\fr{pq}{4c}J\bigg(\sin w\cos^2\fr{z}{2}+\sin z\cos^2\fr{w}{2}\bigg) \ {\rm d}X{\rm d}Y \nonumber \\
= &\int_{\Gamma_0} \fr{1-\cos w}{4}p\ {\rm d}X-\fr{1-\cos z}{4}q\ {\rm d}Y + \int_{(\widehat{X},\phi(\widehat{X}))}^{(X_\pi ,X_\pi -\widetilde{X})} \fr{1-\cos w}{4}p\ {\rm d}X-\fr{1-\cos z}{4}q\ {\rm d}Y \nonumber \\ & -\iint_{\widetilde{\Omega}_t}\fr{pq}{4c}\cos^2\fr{w}{2}\cos^2\fr{z}{2} \bigg(\tan\fr{w}{2}+\tan\fr{z}{2}\bigg)^2\ {\rm d}X{\rm d}Y \nonumber \\
& -\iint_{\widetilde{\Omega}_t}\fr{pq}{2c}\cos^2\fr{w}{2}\cos^2\fr{z}{2} 2J\bigg(\tan\fr{w}{2}+\tan\fr{z}{2}\bigg) \ {\rm d}X{\rm d}Y.
\end{align}
Here we have used the boundary conditions $w+z=0, p=q$ on $L_0$.
Moreover, recalling the boundary conditions \eqref{2.23} on $L_\pi$ arrives at
\begin{align}\label{2.74a}
&\int_{(\widehat{X},\phi(\widehat{X}))}^{(X_\pi ,X_\pi -\widetilde{X})} \fr{1-\cos w}{4}p\ {\rm d}X-\fr{1-\cos z}{4}q\ {\rm d}Y \nonumber \\
=&\int_{\phi(\widehat{X})}^{X_\pi -\widetilde{X}} \bigg(\fr{2\sin^2\fr{w}{2}}{4}\fr{1+\tan^2\fr{w}{2}}{1+\tan^2\fr{z}{2}}-\fr{2\sin^2\fr{z}{2}}{4} \bigg)q(Y+\widetilde{X},Y)\ {\rm d}Y \nonumber \\
=&\int_{\phi(\widehat{X})}^{X_\pi -\widetilde{X}} \bigg(\tan^2\fr{w}{2}-\tan^2\fr{z}{2} \bigg)\fr{\cos^2\fr{z}{2}}{2}q(Y+\widetilde{X},Y)\ {\rm d}Y.
\end{align}
Along $L_\pi$, we have by \eqref{2.18} and the boundary conditions \eqref{2.23}
\begin{align*}
{\rm d}t=\fr{1+\cos w}{4c}p\ {\rm d}X+\fr{1+\cos z}{4c}q\ {\rm d}Y=\fr{\cos^2\fr{z}{2}}{c}q\ {\rm d}Y.
\end{align*}
Putting the above into \eqref{2.74a} and using the boundary condition $\theta_x=-\iota \theta$ on $x=\pi$ yields
\begin{align}\label{2.74b}
&\int_{(\widehat{X},\phi(\widehat{X}))}^{(X_\pi ,X_\pi -\widetilde{X})} \fr{1-\cos w}{4}p\ {\rm d}X-\fr{1-\cos z}{4}q\ {\rm d}Y \nonumber \\
=&\int_{0}^t -2\iota c^2(\theta)\theta\theta_t(\pi,t)\ {\rm d}t=-2\int_{\theta(\pi,0)}^{\theta(\pi,t)} \iota c^2(s)s\ {\rm d}s=2B(\theta(\pi,0))-2B(\theta(\pi,t)).
\end{align}
We now insert \eqref{2.74b} into \eqref{2.74} and utilize \eqref{2.73a} to obtain
\begin{align}\label{2.74c}
\int_0^\pi (\theta_{t}^2+c^2(\theta)\theta_{x}^2)(x,t)\ {\rm d}x \leq & \int_0^\pi  (\theta_{t}^2+c^2(\theta)\theta_{x}^2)(x,0)\ {\rm d}x +2B(\theta(\pi,0))
\nonumber \\
& -2B(\theta(\pi,t)) -2\iint_{\Omega_t}\theta_{t}^2\ {\rm d}x{\rm d}t -2\iint_{\Omega_t}J\theta_{t}\ {\rm d}x{\rm d}t,
\end{align}
from which we get
\begin{align}\label{2.75}
E(t)\leq &E(0) -2\iint_{\Omega_t}\theta_{t}^2\ {\rm d}x{\rm d}t -\iint_{\Omega_t}2|J|\cdot|\theta_{t}|\ {\rm d}x{\rm d}t \nonumber \\
\leq &E(0) +\iint_{\Omega_t}|J|^2\ {\rm d}x{\rm d}t\leq E(0)+\bar{J}(T)\pi T,
\end{align}
which means that
\begin{align}\label{2.76}
\max_{0\leq t\leq \widetilde{T}}E(t)\leq E(0)+\bar{J}(T)\pi T.
\end{align}
Furthermore, the inequality in \eqref{2.76} indicates that $(\theta_t, \theta_x)(\cdot,t)$ and then $(R, S)(\cdot,t)$ are square integrable functions in $x$.

\subsection{Global existence of the boundary value problem to the semilinear system}\label{S2.5}

Let $T>0$ be an any fixed time. Denote
$$
\Omega_T=\{(x,t)\ x\in[0,\pi],\ t\in[0,T]\}.
$$
We use $\widetilde{\Omega}_T$ to represent the image of $\Omega_T$ in the $(X,Y)$ plane. For any given $J(x,t)$ satisfying
\begin{align}\label{2.43}
\|J\|_{C^\alpha\cap L^\infty(\Omega_T)}=: \bar{J}(T)<\infty,
\end{align}
we next extend the local solution constructed in Subsection \ref{S2.3} to the whole region $\widetilde{\Omega}_T$. It suffices to establish the global a priori estimates on $p$ and $q$.

\begin{lem}
Let $(\theta, w, z, p, q)(X,Y)$ be a solution of the boundary value problem \eqref{2.17}, \eqref{2.23} on $\widetilde{\Omega}_T$. Then there exist two positive constants $M$ and $N$ depending only on the boundary data on $\Gamma_0$ and $\bar{J}(T)$ such that
\begin{align}\label{2.44}
0<M\leq \max_{(X,Y)\in\widetilde{\Omega}_T}\{p(X,Y), q(X,Y)\}\leq N.
\end{align}
\end{lem}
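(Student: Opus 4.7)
My plan is to bound $p$ and $q$ by integrating the ODEs for $p_Y$ and $q_X$ from \eqref{2.17} along characteristics and combining them with two a priori controls: the uniform bound on $J$ from \eqref{2.43} and the area identity \eqref{2.19}. The key technical difficulty will be closing a coupled Gronwall argument, since a direct estimate gives $(\ln p)_Y \leq C(1+\bar J(T))\,q$ and similarly for $q$, and neither $\int q\,\mathrm{d}Y$ nor $\int p\,\mathrm{d}X$ is controlled on its own. My strategy for circumventing this is to first estimate the desingularized quantities $P := p\cos^2(w/2)$ and $Q := q\cos^2(z/2)$, then transfer those bounds to $p$ and $q$.

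For the estimate on $P$, I would compute $P_Y = p_Y\cos^2(w/2) - \tfrac12 p\sin w\, w_Y$ and substitute both $p_Y$ and $w_Y$ from \eqref{2.17}. The expansion yields several remarkable cancellations: the $J$-forcing terms cancel exactly (the coefficient from $p_Y\cos^2(w/2)$ is the negative of the coefficient from $\tfrac12 p\sin w\, w_Y$), and the quadratic terms $\sin w\sin z\cos^2(w/2)$ and $\sin^2(w/2)\cos^2(w/2)\cos^2(z/2)$ cancel pairwise. What remains is only the $c'$-contribution, which after trigonometric identities reduces to
\[
P_Y \;=\; -\,\frac{pq\, c'}{4c^2}\,\cos(w/2)\cos(z/2)\sin\!\tfrac{w-z}{2}.
\]
An analogous identity holds for $Q_X$. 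Then $|P_Y| \leq \tfrac{C_1}{4C_L^2}\,pq|\cos(w/2)\cos(z/2)|$, and Cauchy--Schwarz combined with \eqref{2.19} (which forces $\iint pq\cos^2(w/2)\cos^2(z/2)\,\mathrm dX\mathrm dY \leq 2C_U\pi T$) together with the bound $P(X,\phi(X))=\cos^2(\bar w/2)\leq 1$ on $\Gamma_0$ yields a uniform upper bound $0 < P \leq M'$ on $\widetilde{\Omega}_T$; the same argument bounds $Q$.

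To pass from $P, Q$ back to $p, q$, I return to $p_Y = pq\, F_1$ with $|F_1| \leq C_0(1+\bar J(T))$ and rewrite the increment of $\ln p$ along a backward characteristic $X=\text{const}$. Along such a characteristic, $\mathrm dt = \frac{(1+\cos z)q}{4c}\mathrm dY$, so that $q\,\mathrm dY = \frac{4c}{1+\cos z}\mathrm dt$; combining this with the now-established bound $Q \leq M''$ (i.e.\ $q\cos^2(z/2) \leq M''$), I can control $\int q\,\mathrm dY$ over each monotone piece of the characteristic by a constant depending only on $T$, $C_L$, $C_U$ and the boundary data. A Gronwall inequality then produces the uniform upper bound on $p$, and symmetrically on $q$. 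The lower bound then follows immediately: since $|(\ln p)_Y| \leq C_0(1+\bar J(T))\,q$ and $\int q\,\mathrm dY$ is now controlled, the representation $p(X,Y) = p(X,\phi(X))\exp(\int qF_1\,\mathrm dY')$ together with $p(X,\phi(X))=1$ gives $p \geq e^{-C}$.

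Finally, I would propagate these bounds across the boundary arcs $L_0$ and $L_\pi$ using the decomposition of $\widetilde{\Omega}_T$ into the regions $\Omega^n$ of Subsection~2.3. On $L_0$ the condition $p=q$ from \eqref{2.22} preserves the bounds trivially. On $L_\pi$, the identity
\[
p \;=\; \frac{1+(\tan(z/2)-2\iota c(\theta)\theta)^2}{1+\tan^2(z/2)}\,q
\]
defines a multiplicative factor which is uniformly bounded above and away from zero once $\theta$ is bounded, and $\theta$ is bounded by the energy estimate \eqref{2.76} proved in the previous subsection. Because the wave speed is bounded below by $C_L>0$, only finitely many reflections are needed to cover $\widetilde{\Omega}_T$, so iterating the bound over $n$ remains uniform. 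The principal obstacle in this plan is Step 2, the algebraic verification that the $J$- and self-quadratic terms cancel in $P_Y$; this is what makes the whole scheme close, and without it the naive estimate $|p_Y|\leq Cpq(1+\bar J(T))$ cannot be integrated to a uniform bound.
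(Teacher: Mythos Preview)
Your cancellation identity for $P_Y$ is correct and elegant, but the argument that converts it into a bound on $P$ does not close. To bound $P(X,Y)$ you must integrate $P_Y$ along the vertical segment $X=\text{const}$, giving
\[
|P(X,Y)-1|\;\le\;\frac{C_1}{4C_L^2}\int_{\phi(X)}^{Y} pq\,\bigl|\cos\tfrac{w}{2}\cos\tfrac{z}{2}\bigr|\,\mathrm dY'.
\]
The area identity \eqref{2.19} controls only the \emph{double} integral $\iint \frac{pq}{2c}\cos^2\tfrac{w}{2}\cos^2\tfrac{z}{2}\,\mathrm dX\mathrm dY$, not any single slice. If you apply Cauchy--Schwarz to the slice integral, the second factor is $\bigl(\int pq\,\mathrm dY'\bigr)^{1/2}$, which is exactly what you do not yet control; so the estimate is circular. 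The same obstruction blocks your Step~2: from $\mathrm dt=\frac{(1+\cos z)q}{4c}\,\mathrm dY$ you get $\int q\,\mathrm dY=\int \frac{2c}{\cos^2(z/2)}\,\mathrm dt$, and a bound on $Q=q\cos^2\tfrac{z}{2}$ gives no control on $\sec^2\tfrac{z}{2}$ (which blows up when $S\to\pm\infty$). Thus neither step produces the line-integral bounds $\int p\,\mathrm dX$, $\int q\,\mathrm dY$ that the final Gronwall requires.

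The paper obtains those line integrals directly by a different mechanism: it applies Green's theorem to the $1$--form $p\,\mathrm dX-q\,\mathrm dY$ over subregions $\Sigma\subset\widetilde\Omega_T$ bounded by a horizontal segment, a vertical segment, and pieces of $\Gamma_0$, $L_0$, $L_\pi$. The point is that in $p_Y+q_X$ the $c'$--terms cancel exactly, leaving
\[
p_Y+q_X=-\frac{pq}{2c}\cos^2\tfrac{w}{2}\cos^2\tfrac{z}{2}\Bigl[\bigl(\tan\tfrac{w}{2}+\tan\tfrac{z}{2}\bigr)^2+2J\bigl(\tan\tfrac{w}{2}+\tan\tfrac{z}{2}\bigr)\Bigr].
\]
After completing the square and using \eqref{2.19} this double integral is bounded by $\iint|J|^2\,\mathrm dx\,\mathrm dt\le \bar J(T)^2\pi T$, and the boundary pieces on $\Gamma_0$, $L_0$ contribute explicit constants (the contribution from $L_\pi$ is handled by the energy bound \eqref{2.76}). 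This yields the needed bounds on $\int p\,\mathrm dX'$ and $\int q\,\mathrm dY'$ along any horizontal or vertical segment in $\widetilde\Omega_T$, after which your final Gronwall step and your iteration across $L_0$, $L_\pi$ go through as you describe.
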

\begin{proof}
For convenience, we write equations of $p$ and $q$ in \eqref{2.17} as
\begin{align}\label{2.45}
p_Y=Fpq,\quad q_X=Gpq.
\end{align}
We first discuss the region $\Omega^0$. From \eqref{2.45}, we know by $p=q=1$ on $\Gamma_0$ and the boundary conditions on $L_0, L_\pi$ that $p$ and $q$ are positive on $\Omega^0$.

For any point $(X,Y)$ in $\Omega_{1}^0$, we consider the region $\Sigma_{1}^0$ enclosed by a horizontal segment between $(Y,Y)$ and $(X,Y)$,
a vertical segment between $(X,Y)$ and $(X,\phi(X))$, $\Gamma_0$ and $L_0$. By a direct calculate, one obtains
\begin{align}\label{2.46}
\int_{\pa \Sigma_{1}^0}p\ {\rm d}X'-q\ {\rm d}Y'=&-\iint_{\Sigma_{1}^0}q_X+p_Y\ {\rm d}X'{\rm d}Y' \nonumber \\
=&\iint_{\Sigma_{1}^0}\fr{pq}{2c}\bigg(\sin\fr{w}{2}\cos\fr{z}{2} +\sin\fr{z}{2}\cos\fr{w}{2}\bigg)^2\ {\rm d}X'{\rm d}Y' \nonumber \\
&+\iint_{\Sigma_{1}^0}\fr{pq}{2c}J\bigg(\sin z\cos^2\fr{w}{2} +\sin w\cos^2\fr{z}{2}\bigg)\ {\rm d}X'{\rm d}Y'.
\end{align}
In view of the construction of $\Sigma_{1}^0$, we apply the facts $p=q=1$ on $\Gamma_0$ and $p=q$ on $L_0$ again
\begin{align}\label{2.47}
\int_{\pa \Sigma_{1}^0}p\ {\rm d}X'-q\ {\rm d}Y'=&-\int_{Y}^X p(X',Y)\ {\rm d}X' -\int_{\phi(X)}^Y q(X,Y')\ {\rm d}Y' \nonumber \\
&+\int_{0}^X 1\ {\rm d}X'-1\ {\rm d}\phi(X') -\int_{(0,0)}^{(Y,Y)}p\ {\rm d}X'-q\ {\rm d}X' \nonumber \\
=&X-\phi(X)-\int_{Y}^X p(X',Y)\ {\rm d}X' -\int_{\phi(X)}^Y q(X,Y')\ {\rm d}Y'.
\end{align}
Putting \eqref{2.47} into \eqref{2.46} yields
\begin{align}\label{2.48}
&\int_{Y}^X p(X',Y)\ {\rm d}X' +\int_{\phi(X)}^Y q(X,Y')\ {\rm d}Y' \nonumber \\
=&X-\phi(X) -\iint_{\Sigma_{1}^0}\fr{pq}{2c}\bigg(\sin\fr{w}{2}\cos\fr{z}{2} +\sin\fr{z}{2}\cos\fr{w}{2}\bigg)^2\ {\rm d}X'{\rm d}Y'  \nonumber \\
&-\iint_{\Sigma_{1}^0}\fr{pq}{2c}J\bigg(\sin z\cos^2\fr{w}{2} +\sin w\cos^2\fr{z}{2}\bigg)\ {\rm d}X'{\rm d}Y' \nonumber \\
= &X-\phi(X) -\iint_{\Sigma_{1}^0}\fr{pq}{2c}\bigg(\tan\fr{w}{2}+\tan\fr{z}{2}\bigg)^2\cos^2\fr{w}{2}\cos^2\fr{z}{2}\ {\rm d}X'{\rm d}Y'  \nonumber \\
&-\iint_{\Sigma_{1}^0}\fr{pq}{2c}2J\bigg(\tan\fr{w}{2}+\tan\fr{z}{2}\bigg)\cos^2\fr{w}{2}\cos^2\fr{z}{2}\ {\rm d}X'{\rm d}Y' \nonumber \\
\leq & X-\phi(X) +\iint_{\Sigma_{1}^0}\fr{pq}{2c}|J|^2\cos^2\fr{w}{2}\cos^2\fr{z}{2}\ {\rm d}X'{\rm d}Y'
\nonumber \\
\leq & \widehat{X}+\widetilde{X} +\iint_{\tilde\Sigma_{1}^0}|J|^2\ {\rm d}x{\rm d}t \leq \widehat{X}+\widetilde{X} +\bar{J}(T)\pi T.
\end{align}
Here $\tilde\Sigma_{1}^0$ is the region in the $(x,t)$ plane transformed from $\Sigma_{1}^0$. Moreover, if the point $(X,Y)$ on $L_0$, that is $X=Y$, we directly have by \eqref{2.48}
\begin{align}\label{2.49}
\int_{\phi(Y)}^Y q(Y,Y')\ {\rm d}Y'\leq \widehat{X}+\widetilde{X} +\bar{J}(T)\pi T.
\end{align}
Now we integrate \eqref{2.45} and employ \eqref{2.48}, \eqref{2.49} to acquire
\begin{align}\label{2.50}
p(X,Y)=&p(X,\phi(X))\exp\bigg(\int_{\phi(X)}^YFq(X,Y')\ {\rm d}Y'\bigg) \nonumber \\
\leq &\exp\bigg(\tilde{F}\int_{\phi(X)}^Yq(X,Y')\ {\rm d}Y'\bigg)\leq \exp\bigg(\tilde{F} [\widehat{X}+\widetilde{X} +\bar{J}(T)\pi T]\bigg),
\end{align}
and
\begin{align}\label{2.51}
q(X,Y)=&q(Y,Y)\exp\bigg(\int_{Y}^XGp(X',Y)\ {\rm d}X'\bigg) =p(Y,Y)\exp\bigg(\int_{Y}^XGp(X',Y)\ {\rm d}X'\bigg) \nonumber \\
=&p(Y,\phi(Y))\exp\bigg(\int_{\phi(Y)}^YFq(Y,Y')\ {\rm d}Y'\bigg)\cdot\exp\bigg(\int_{Y}^XGp(X',Y)\ {\rm d}X'\bigg) \nonumber \\
\leq& \exp\bigg(\tilde{F}\int_{\phi(Y)}^Yq(Y,Y')\ {\rm d}Y'\bigg)\cdot\exp\bigg(\tilde{F}\int_{Y}^Xp(X',Y)\ {\rm d}X'\bigg)
\nonumber \\
\leq& \exp\bigg(2\tilde{F} [\widehat{X}+\widetilde{X} +\bar{J}(T)\pi T]\bigg),
\end{align}
where
\begin{align*}
|F|, |G|\leq \tilde{F}=\fr{1}{2C_L}\bigg(\fr{C_1}{2C_L}+2+\bar{J}(T)\bigg).
\end{align*}
Here the assumption \eqref{1.6} is applied.

The case $(X,Y)\in\Omega_{2}^0$ can be easily handled. We now discuss the case $(X,Y)\in\Omega_{3}^0$.
Consider the region $\Sigma_3^0$ enclosed by a vertical segment between $(X,Y)$ and $(X, X-\widetilde X)$, a horizontal segment between $(X,Y)$ and $(\phi^{-1}(Y), Y)$, $\Gamma_0$ and $L_\pi$. By Green's theorem, one also has
\begin{align}\label{2.46a}
\int_{\pa \Sigma_{3}^0}p\ {\rm d}X'-q\ {\rm d}Y'=&-\iint_{\Sigma_{3}^0}q_X+p_Y\ {\rm d}X'{\rm d}Y' \nonumber \\
=&\iint_{\Sigma_{3}^0}\fr{pq}{2c}\cos^2\fr{w}{2}\cos^2\fr{z}{2}\bigg(\tan\fr{w}{2}+\tan\fr{z}{2}\bigg)^2\ {\rm d}X'{\rm d}Y' \nonumber \\
&+\iint_{\Sigma_{3}^0}\fr{pq}{2c}\cos^2\fr{w}{2}\cos^2\fr{z}{2}2J\bigg(\tan\fr{w}{2}+\tan\fr{z}{2}\bigg)\ {\rm d}X'{\rm d}Y'.
\end{align}
Due to the construction of $\Sigma_{3}^0$, we employ the boundary conditions in \eqref{2.23} on $\Gamma_0$ to get
\begin{align}\label{2.47a}
\int_{\pa \Sigma_{3}^0}p\ {\rm d}X'-q\ {\rm d}Y'=&\int_{X-\widetilde X}^Y -q(X,Y')\ {\rm d}Y' -\int_{\phi^{-1}(Y)}^X p(X',Y)\ {\rm d}X' \nonumber \\
&\  -\int_{\widehat X}^{\phi^{-1}(Y)} 1\ {\rm d}X'-1\ {\rm d}\phi(X') +\int_{(\widehat X,\widehat X-\widetilde X)}^{(X, X-\widetilde{X})}p\ {\rm d}X'-q\ {\rm d}X'  \nonumber \\
=&\widehat X-\phi^{-1}(Y)+Y-\phi(\widehat X)-\int_{X-\widetilde X}^Y q(X,Y')\ {\rm d}Y' \nonumber \\
& \ -\int_{\phi^{-1}(Y)}^X p(X',Y)\ {\rm d}X'  +\int_{\widehat X-\widetilde X}^{X-\widetilde{X}}(p-q)(Y'+\widetilde{X},Y')\ {\rm d}Y'.
\end{align}
We next estimate the last term in \eqref{2.47a}. Making use of the boundary conditions in \eqref{2.23} on $L_\pi$ leads to
\begin{align}\label{2.47b}
& \int_{\widehat X-\widetilde X}^{X-\widetilde{X}}(p-q)(Y'+\widetilde{X},Y')\ {\rm d}Y' =\int_{\widehat X-\widetilde X}^{X-\widetilde{X}}q\left[\frac{1+(\tan\frac z2-2\iota c\theta)^2}{1+\tan^2\frac z2}-1\right](Y'+\widetilde{X},Y')\ {\rm d}Y' \nonumber \\
=& \int_{\widehat X-\widetilde X}^{X-\widetilde X} \left[\frac{1+(\tan\frac z2-2\iota c\theta)^2}{1+\tan^2\frac z2}-1\right]_{\Gamma_0} {\rm d}Y'+\iint_{\Sigma_{3,1}^0}\partial_X\left[q\left(\frac{(\tan\frac z2-2\iota c\theta)^2-\tan^2\frac z2}{1+\tan^2\frac z2}\right)\right]\ {\rm d}X'{\rm d}Y',
\end{align}
where $\Sigma_{3,1}^0$ is the region enclosed by a horizontal segment between $(X,X-\widetilde{X})$ and $(\phi^{-1}(X-\widetilde{X}), X-\widetilde{X})$, $\Gamma_0$ and $L_\pi$. Performing direct calculations, one obtains by system \eqref{2.17}
\begin{align}\label{2.47c}
&\partial_X\left[q\left(\frac{(\tan\frac z2-2\iota c\theta)^2-\tan^2\frac z2}{1+\tan^2\frac z2}\right)\right]
=\partial_X\left[q(-2\iota c\theta\sin z+2\iota^2c^2\theta^2\cos z+2\iota^2c^2\theta^2)\right] \nonumber \\
=& q_X\left[-2\iota c\theta\sin z+2\iota^2c^2\theta^2\cos z+2\iota^2c^2\theta^2\right] +z_X\left[-2\iota q c\theta\cos z-2\iota^2qc^2\theta^2\sin z\right] \nonumber \\
&+\theta_X\left[-2\iota (c'\theta+c)q\sin z+2\iota^2q(2cc'\theta^2+2c^2\theta)(\cos z+1)\right]
\nonumber \\
=&\frac{pq}{2c}\cos^2\frac w2\cos^2\frac z2(\iota \Lambda_1+\iota^2\Lambda_2),
\end{align}
where
\begin{align*}
\Lambda_1=~&c\theta\left[4\tan\frac w2\sin^2 \frac z2+2\tan \frac z2-2\tan \frac w2\right]-4c\tan \frac z2 \tan \frac w2  \\
&+c'\theta\bigg(2+\tan^2\frac w2+\tan^2\frac z2-6\tan \frac z2\tan \frac w2\bigg)   -2c\theta\sin z  \\
&+8c\theta\sin^2 \frac z2 J +4c\theta J\cos z+2c\theta\sin z+2c\theta\sin w-2c'\theta,
\end{align*}
and
\begin{align*}
\Lambda_2
=~&\bigg[\fr{c'}{2c}\tan \frac w2-\fr{c'}{4c}\sin z(1+\tan^2\frac w2)-\fr{1}{2}\tan \frac w2\sin z -\sin^2\fr{z}{2} -J\sin z\bigg]4c^2\theta^2\\
&+8\tan \frac w2\left[(cc'\theta^2+c^2\theta)\right]\\
&+\bigg[\fr{c'}{c}\bigg(\sin\fr{z}{2}\tan\fr{z}{2}-\cos\fr{z}{2}\tan^2\fr{w}{2}\bigg)-2\tan \frac w2 -2\sin \frac z2 -4J\cos\fr{z}{2}\bigg]\left[-2c^2\theta^2\sin \frac z2\right].
\end{align*}
Note that the quadratic terms of $\tan\fr{w}{2}$ and $\tan\fr{z}{2}$ appear at most in $\Lambda_1, \Lambda_2$, and $\Sigma_{3,1}^0\subset\Omega^0$ is a bounded region. We use \eqref{2.73a} and the estimate \eqref{2.76} to acquire
\begin{align}\label{2.47d}
&\left|\iint_{\Sigma_{3,1}^0}\partial_X\left[q\left(\frac{(\tan\frac z2-2\iota c\theta)^2-\tan^2\frac z2}{1+\tan^2\frac z2}\right)\right]\ {\rm d}X'{\rm d}Y'\right| \nonumber \\
= &\left|\iint_{\Sigma_{3,1}^0}\frac{pq}{2c}\cos^2\frac w2\cos^2\frac z2(\iota \Lambda_1+\iota^2\Lambda_2)\ {\rm d}X'{\rm d}Y'\right|\les (\iota+\iota^2)[E(0)+1+\bar{J}(T)]T.
\end{align}
Here and below, we use $A\les B$ to denote $A\leq CB$ for some uniform constant $C$.
Combining \eqref{2.46a}, \eqref{2.47a}, \eqref{2.47b} and \eqref{2.47d}, one achieves
\begin{align}\label{2.47e}
& \int_{X-\tilde X}^Y q(X,Y')\ {\rm d}Y' +\int_{\phi^{-1}(Y)}^X p(X',Y)\ {\rm d}X' \nonumber \\
\leq\ & \widehat X-\phi^{-1}(Y)+Y-\phi(\widehat X)+\int_{(\widehat X,\widehat X-\widetilde X)}^{(X, X-\widetilde{X})}(p-q)(Y'+\widetilde{X},Y')\ {\rm d}Y'  \nonumber \\
& -\iint_{\Sigma_{3}^0}\fr{pq}{2c}\cos^2\fr{w}{2}\cos^2\fr{z}{2}\bigg(\tan\fr{w}{2}+\tan\fr{z}{2}\bigg)^2\ {\rm d}X'{\rm d}Y' \nonumber \\
& -\iint_{\Sigma_{3}^0}\fr{pq}{2c}\cos^2\fr{w}{2}\cos^2\fr{z}{2}2J\bigg(\tan\fr{w}{2}+\tan\fr{z}{2}\bigg)\ {\rm d}X'{\rm d}Y' \nonumber \\
\lesssim\ & \widehat X+\widetilde X+\bar J(T)\pi T+(\iota+\iota^2)[E(0)+1+\bar{J}(T)]T.
\end{align}
With similar arguments as \eqref{2.50} and \eqref{2.51}, we can obtain the upper bounds of $p(X,Y)$ and $q(X,Y)$ for all $(X,Y)\in\Omega_{3}^0$.
Thus there exist two positive constants $M_0$ and $N_0$ depending only on the boundary data on $\Gamma_0$ and $\bar{J}(T)$ such that
\begin{align}\label{2.52}
0<M_0\leq \max_{(X,Y)\in\Omega^0}\{p(X,Y), q(X,Y)\}\leq N_0.
\end{align}

Thanks to \eqref{2.52}, there hold
\begin{align}\label{2.53}
0<M_0\leq \max_{(X,Y)\in\Gamma_1}\{p(X,Y), q(X,Y)\}\leq N_0,
\end{align}
where $\Gamma_1=\Gamma_{11}\cup\Gamma_{12}$,
$$
\Gamma_{11}=\{(X,Y):\ X=\widehat{X},\ 0\leq Y\leq \widehat{X}\},\quad \Gamma_{12}=\{(X,Y):\ \widehat{X}\leq X\leq \widetilde{X},\ Y=0\},
$$
which are two boundaries of the region $\Omega^1$. For any point $(X,Y)$ in $\Omega^1$, for example, $(X,Y)\in\Omega_{3}^1$, we consider the region $\Sigma_{3}^1$ enclosed by a horizontal segment between $(\widehat{X},Y)$ and $(X,Y)$,
a vertical segment between $(X,Y)$ and $(X,X-\widetilde{X})$, $L_\pi$, $\Gamma_{12}$ and $\Gamma_{11}$. According to the construction of $\Sigma_{3}^1$, one has as in \eqref{2.47}
\begin{align}\label{2.54}
&\int_{\pa \Sigma_{3}^1}p\ {\rm d}X'-q\ {\rm d}Y'=-\int_{\widehat{X}}^X p(X',Y)\ {\rm d}X' -\int_{X-\widetilde{X}}^Y q(X,Y')\ {\rm d}Y' \nonumber \\
&\qquad +\int_{0}^Y q(\widehat{X},Y')\ {\rm d}Y' +\int_{\widehat{X}}^{\widetilde{X}} p(X',0)\ {\rm d}X' +\int_{(\widetilde{X},0)}^{(X,X-\widetilde{X})}p\ {\rm d}X'-q\ {\rm d}(X'-\widetilde{X}) \nonumber \\
\leq &N_0(Y+\widetilde{X}-\widehat{X})-\int_{\widehat{X}}^X p(X',Y)\ {\rm d}X' -\int_{X-\widetilde{X}}^Y q(X,Y')\ {\rm d}Y' \nonumber \\
&+\int_{0}^{X-\widetilde{X}}(p-q)(Y'+\widetilde{X},Y')\ {\rm d}Y'.
\end{align}
The last term in \eqref{2.54} can be treated as in \eqref{2.47b}. Hence similar to \eqref{2.47e}, we can acquire
\begin{align}\label{2.55}
&\int_{\widehat{X}}^X p(X',Y)\ {\rm d}X' +\int_{X-\widetilde{X}}^Y q(X,Y')\ {\rm d}Y'\nonumber \\ \lesssim & N_0(\widetilde{X}+\widehat{X}) +\bar J(T)\pi T+(\iota+\iota^2)[E(0)+1+\bar{J}(T)]T=:C_0(\iota, \bar{J}, T, N_0).
\end{align}
Integrating \eqref{2.45} and utilizing \eqref{2.55} and the boundary conditions \eqref{2.23} give
\begin{align}\label{2.56}
p(X,Y)=&p(X,X-\widetilde{X})\exp\bigg(\int_{X-\widetilde{X}}^YFq(X,Y')\ {\rm d}Y'\bigg) \nonumber \\
= &\fr{1+(\tan\fr{z}{2}-2\iota c(\theta)\theta)^2}{1+\tan^2\fr{z}{2}}q(X,X-\widetilde{X})\exp\bigg(\int_{X-\widetilde{X}}^YFq(X,Y')\ {\rm d}Y'\bigg) \nonumber \\
\les &q(\widehat{X},X-\widetilde{X})\exp\bigg(\int_{\widehat{X}}^XGp(X',X-\widetilde{X})\ {\rm d}X'\bigg)\cdot \exp\bigg(\int_{X-\widetilde{X}}^YFq(X,Y')\ {\rm d}Y'\bigg)  \nonumber \\
\les & N_0\exp\bigg(2\tilde{F} C_0(\iota, \bar{J}, T, N_0)\bigg),
\end{align}
and
\begin{align}\label{2.57}
q(X,Y)=&q(\widehat{X},Y)\exp\bigg(\int_{\widehat{X}}^XGp(X',Y)\ {\rm d}X'\bigg) \nonumber \\
\les & N_0\exp\bigg(2\tilde{F} C_0(\iota, \bar{J}, T, N_0)\bigg).
\end{align}
The analysis are the same for the point $(X,Y)\in\Omega_{1}^1$ and $(X,Y)\in\Omega_{2}^1$. Hence there exist two positive constants $M_1$ and $N_1$ depending only on the boundary data on $\Gamma_0$ and $\bar{J}(T)$ such that
\begin{align}\label{2.58}
0<M_1\leq \max_{(X,Y)\in\Omega^1}\{p(X,Y), q(X,Y)\}\leq N_1.
\end{align}

Assume that $l\geq1$ is the maximum integer such that
$$
\bigcup_{i=0}^{l-1}\Omega^i\subset\widetilde{\Omega}_T,
$$
and denote
$$
\Omega'=\widetilde{\Omega}_T\setminus\bigg(\bigcup_{i=0}^{l-1}\Omega^i\bigg).
$$
We repeat the above process to obtain
\begin{align}\label{2.59}
0<M_i\leq \max_{(X,Y)\in\Omega^i}\{p(X,Y), q(X,Y)\}\leq N_i, \ \ i=1,2,\cdots,l-1,
\end{align}
where $M_i, N_i$ are positive constants depending only on the boundary data on $\Gamma_0$ and $\bar{J}(T)$.
Set
\begin{align}\label{2.60}
(X_l,Y_l)=
\left\{
\begin{array}{l}
(\widehat{X}+k\widetilde{X},k\widetilde{X}),\qquad \quad  \ l=2k+1,\\
(k\widetilde{X},\widehat{X}+(k-1)\widetilde{X}),\ \ l=2k,
\end{array}
\right.
\end{align}
Then it follows by \eqref{2.59} that
\begin{align}\label{2.61}
0<M_{l-1}\leq \max_{(X,Y)\in\Gamma_l}\{p(X,Y), q(X,Y)\}\leq N_{l-1},
\end{align}
where $\Gamma_l=\Gamma_{l1}\cup\Gamma_{l2}$,
$$
\Gamma_{l1}=\{(X,Y):\ X=X_l,\ Y_l\leq Y\leq X_l\},\quad \Gamma_{l2}=\{(X,Y):\ X_l\leq X\leq Y_l+\widetilde{X},\ Y=Y_l\},
$$
which are two boundaries of the region $\Omega^l$. Using the same argument as before, one can show that
there exist two positive constants $M_l$ and $N_l$ depending only on the boundary data on $\Gamma_0$ and $\bar{J}(T)$ such that
\begin{align}\label{2.62}
0<M_l\leq \max_{(X,Y)\in\Omega'}\{p(X,Y), q(X,Y)\}\leq N_l.
\end{align}
By setting
$$
M=\min\{M_0,M_1,\cdots, M_l\},\quad N=\max\{N_0,N_1,\cdots, N_l\},
$$
we complete the proof of the lemma.
\end{proof}

\subsection{Global existence for the wave equation}\label{S2.6}

This subsection is devoted to verifying that $\theta(x,t)$ constructed above is a weak solution for the wave equation \eqref{2.1} on the region $\Omega_T$.

We first claim that the function $\theta(x,t)$ is H\"older continuous in both $x$ and $t$ with exponent $1/2$. To prove this claim, for any point $(\xi,\tau)\in\Omega_T$, we consider the forward characteristic $t\mapsto x_+(t;\xi,\tau)$. By construction, this curve is parameterized by the function $X\mapsto (t(X,\overline{Y}), x(X,\overline{Y}))$ for a constant $\overline{Y}$ depending on $(\xi,\tau)$.
Integrating along this forward characteristic from $(\xi_0,\tau_0)$ to $(\xi,\tau)$ and using \eqref{2.13}, \eqref{2.14}, \eqref{2.17} and \eqref{2.18} gives
\begin{align}\label{2.63}
&\int_{\tau_0}^\tau[\theta_t+c(\theta)\theta_x]^2(x_+(t;\xi,\tau),t)\ {\rm d}t=\int_{X_0}^{X_\tau} (2cX_xu_X)^2(2X_t)^{-1}\ {\rm d}X \nonumber \\
=& \int_{X_0}^{X_\tau} (2cX_xu_X)^2(2cX_x)^{-1}\ {\rm d}X =\int_{X_0}^{X_\tau} 2c\big(p\cos^2\fr{w}{2}\big)^{-1}\cdot\bigg(\fr{p}{2c}\sin\fr{w}{2}\cos\fr{w}{2}\bigg)^2 \ {\rm d}X
\nonumber \\
=& \int_{X_0}^{X_\tau} \fr{p}{2c}\sin^2\fr{w}{2} \ {\rm d}X\leq \fr{1}{2C_L}\int_{X_0}^{X_\tau} p (X,\overline{Y}) \ {\rm d}X\leq C,
\end{align}
where $C$ is positive constant depending only on $\tau$, $(\xi_0,\tau_0)$ is a point on $t=0$ or $x=0$ satisfying $\xi_0=x_+(\tau_0;\xi,\tau)$, and $(x(X_\tau,\overline{Y}), t(X_\tau,\overline{Y}))=(\xi, \tau)$, $(x(X_0,\overline{Y}), t(X_0,\overline{Y}))=(\xi_0, \tau_0)$.
Similarly, one integrates along the backward characteristic $t\mapsto x_-(t;\xi,\tau)$ from $(\xi_\pi ,\tau_\pi )$ to $(\xi,\tau)$
and noting $X=Const.$ on this kind of characteristics to acquire
\begin{align}\label{2.64}
&\int_{\tau_\pi }^\tau[\theta_t-c(\theta)\theta_x]^2(x_-(t;\xi,\tau),t)\ {\rm d}t\leq C.
\end{align}
Combining \eqref{2.63} and \eqref{2.64} and employing the boundary conditions of $\theta$ on $x=0,\pi$,
we can obtain that $\theta(x,t)$ is H\"older continuous with exponent $1/2$.
Moreover, it is concluded that all characteristic curves are $C^1$ with H\"older continuous derivative.

Next we check that
\begin{align}\label{2.65}
\int_{0}^T\int_{0}^\pi\bigg(\theta_t\varphi_t-(c(\theta)\varphi)_xc(\theta)\theta_x-\theta_t\varphi -J\varphi\bigg)\ {\rm d}x{\rm d}t +\int_{0}^T(c^2\varphi\theta_x)\bigg|_{x=0}^{x=\pi}\ {\rm d}t=0,
\end{align}
for any test function $\varphi\in\mathcal{F}$, where
\begin{align}\label{2.66}
\mathcal{F}:=\bigg\{\varphi\in C^\infty((0,\pi)\times(0,T)):\ \pa_{t}^i\pa_{x}^j\varphi\bigg|_{t=0,T}=0,\ \ \forall\ i,j=0,1,2\cdots \bigg\}.
\end{align}
To obtain \eqref{2.65}, we need to show that
\begin{align}\label{2.67}
0=&\int_{0}^T\int_0^\pi \bigg(\varphi_t[(\theta_t+c\theta_x)+(\theta_t-c\theta_x)] -(c(\theta)\varphi)_x[(\theta_t+c\theta_x)-(\theta_t-c\theta_x)] \nonumber \\
&-2\theta_t\varphi -2J\varphi\bigg)\ {\rm d}x{\rm d}t +\int_{0}^T2(c^2\varphi\theta_x)\bigg|_{x=0}^{x=\pi}\ {\rm d}t \nonumber \\
=& \int_{0}^T\int_0^\pi \bigg([\varphi_t-(c\varphi)_x](\theta_t+c\theta_x) +[\varphi_t+(c\varphi)_x](\theta_t-c\theta_x) \nonumber \\
&-2\theta_t\varphi -2J\varphi \bigg)\ {\rm d}x{\rm d}t +\int_{0}^T2(c^2\varphi\theta_x)\bigg|_{x=0}^{x=\pi}\ {\rm d}t \nonumber \\
=& \int_{0}^T\int_0^\pi \bigg([\varphi_t-(c\varphi)_x]R +[\varphi_t+(c\varphi)_x]S  -2\theta_t\varphi -2J\varphi \bigg)\ {\rm d}x{\rm d}t +\int_{0}^T2(c^2\varphi\theta_x)\bigg|_{x=0}^{x=\pi}\ {\rm d}t \nonumber \\
=& \int_{0}^T\int_0^\pi \bigg(-2cY_x\varphi_YR +2cX_x\varphi_XS +c'(\theta_XX_x+\theta_YY_x)\varphi(S-R) \nonumber \\
& -2(\theta_XX_t+\theta_YY_t)\varphi -2J\varphi \bigg)\ {\rm d}x{\rm d}t +\int_{0}^T2[c^2\varphi(\theta_XX_x+\theta_YY_x)]\ {\rm d}t\bigg|_{x=0}^{x=\pi}.
\end{align}
Noting that the identities
\begin{align*}
\fr{1}{1+R^2}=\cos^2\fr{w}{2},\ \ \fr{1}{1+S^2}=\cos^2\fr{z}{2},\ \ \fr{R}{1+R^2}=\fr{\sin w}{2},\ \ \fr{S}{1+S^2}=\fr{\sin z}{2},
\end{align*}
and
\begin{align*}
{\rm d}x{\rm d}t=\fr{pq}{2c(1+R^2)(1+S^2)}{\rm d}X{\rm d}Y=\fr{pq}{2c}\cos^2\fr{w}{2}\cos^2\fr{z}{2}{\rm d}X{\rm d}Y,
\end{align*}
we use \eqref{2.13}, \eqref{2.16}, \eqref{2.17} and \eqref{2.18} to rewrite the integral in \eqref{2.67} as
\begin{align}\label{2.68}
&\iint_{\widetilde{\Omega}_T}\bigg(\fr{p\sin w}{2}\varphi_Y +\fr{q\sin z}{2}\varphi_X\bigg)\ {\rm d}X{\rm d}Y +\iint_{\widetilde{\Omega}_T}\bigg\{\fr{c'pq}{8c^2}[\cos(w+z)-1]\nonumber \\
&\quad -\fr{pq}{4c}\bigg(\sin w\cos^2\fr{z}{2} +\sin z\cos^2\fr{w}{2}\bigg) -\fr{pq}{c}J\cos^2\fr{w}{2}\cos^2\fr{z}{2}\bigg\}\varphi\ {\rm d}X{\rm d}Y \nonumber \\
&+\int_{(\widehat{X},\phi(\widehat{X}))}^{(X_\pi ,X_\pi -\widetilde{X})}2c^2\varphi\bigg(\fr{\sin w}{4c}p\cdot\fr{1+R^2}{p}-\fr{\sin z}{4c}q\cdot\fr{1+S^2}{q}\bigg)\bigg\{\fr{1+\cos w}{4c}p\ {\rm d}X+\fr{1+\cos z}{4c}q\ {\rm d}Y\bigg\} \nonumber \\
&-\int_{(0,0)}^{(X_0,X_0)}2c^2\varphi\bigg(\fr{\sin w}{4c}p\cdot\fr{1+R^2}{p}-\fr{\sin z}{4c}q\cdot\fr{1+S^2}{q}\bigg)\bigg\{\fr{1+\cos w}{4c}p\ {\rm d}X+\fr{1+\cos z}{4c}q\ {\rm d}Y\bigg\},
\end{align}
which together with the boundary conditions in \eqref{2.23} yields
\begin{align}\label{2.69}
&\iint_{\widetilde{\Omega}_T}\bigg(\fr{p\sin w}{2}\varphi_Y +\fr{q\sin z}{2}\varphi_X\bigg)\ {\rm d}X{\rm d}Y +\iint_{\widetilde{\Omega}_T}\bigg\{\fr{c'pq}{8c^2}[\cos(w+z)-1] \nonumber \\
&\quad -\fr{pq}{4c}\bigg(\sin w\cos^2\fr{z}{2} +\sin z\cos^2\fr{w}{2}\bigg) -\fr{pq}{c}J\cos^2\fr{w}{2}\cos^2\fr{z}{2}\bigg\}\varphi\ {\rm d}X{\rm d}Y \nonumber \\
&+\int_{\widehat{X}}^{X_\pi } p\sin w \varphi(X, X-\widetilde{X})\ {\rm d}X  -\int_{0}^{X_0}p\sin w \varphi(X, X)\ {\rm d}X,
\end{align}
where $(X_0, X_0)$ and $(X_\pi , X_\pi -\widetilde{X})$ are the points in the $(X,Y)$ plane transformed from $(0,T)$ and $(\pi,T)$, respectively.
By means of Green's theorem and boundary conditions, one gets
\begin{align}\label{2.70}
&\iint_{\widetilde{\Omega}_T}\bigg(\fr{p\sin w}{2}\varphi_Y +\fr{q\sin z}{2}\varphi_X\bigg)\ {\rm d}X{\rm d}Y \nonumber \\
= &\iint_{\widetilde{\Omega}_T}\bigg\{\bigg(\fr{p\sin w}{2}\varphi\bigg)_Y +\bigg(\fr{q\sin z}{2}\varphi\bigg)_X\bigg\}\ {\rm d}X{\rm d}Y -\iint_{\widetilde{\Omega}_T}\fr{1}{2}[(p\sin w)_Y+(q\sin z)_X]\varphi\ {\rm d}X{\rm d}Y \nonumber \\
=& \int_{\pa \widetilde{\Omega}_T}-\fr{p\sin w}{2}\varphi\ {\rm d}X +\fr{q\sin z}{2}\varphi\ {\rm d}Y -\iint_{\widetilde{\Omega}_T}\fr{1}{2}[(p\sin w)_Y+(q\sin z)_X]\varphi\ {\rm d}X{\rm d}Y \nonumber \\
=& -\int_{\widehat{X}}^{X_\pi } p\sin w \varphi(X, X-\widetilde{X})\ {\rm d}X  +\int_{0}^{X_0}p\sin w \varphi(X, X)\ {\rm d}X \nonumber \\
&- \iint_{\widetilde{\Omega}_T}\fr{1}{2}[(p\sin w)_Y+(q\sin z)_X]\varphi\ {\rm d}X{\rm d}Y.
\end{align}
According to \eqref{2.17}, we directly compute to achieve
\begin{align}\label{2.71}
&\fr{1}{2}[(p\sin w)_Y+(q\sin z)_X] \nonumber \\
= &\fr{c'pq}{8c^2}[\cos(w+z)-1]  -\fr{pq}{4c}\bigg(\sin w\cos^2\fr{z}{2} +\sin z\cos^2\fr{w}{2}\bigg) -\fr{pq}{c}J\cos^2\fr{w}{2}\cos^2\fr{z}{2}.
\end{align}
Inserting \eqref{2.70} and \eqref{2.71} into \eqref{2.69} leads to
\begin{align}\label{2.72}
&\iint_{\widetilde{\Omega}_T}\bigg(\fr{p\sin w}{2}\varphi_Y +\fr{q\sin z}{2}\varphi_X\bigg)\ {\rm d}X{\rm d}Y +\iint_{\widetilde{\Omega}_T}\bigg\{\fr{c'pq}{8c^2}[\cos(w+z)-1] \nonumber \\
&\quad -\fr{pq}{4c}\bigg(\sin w\cos^2\fr{z}{2} +\sin z\cos^2\fr{w}{2}\bigg) -\fr{pq}{c}J\cos^2\fr{w}{2}\cos^2\fr{z}{2}\bigg\}\varphi\ {\rm d}X{\rm d}Y \nonumber \\
&+\int_{\widehat{X}}^{X_\pi } p\sin w \varphi(X, X-\widetilde{X})\ {\rm d}X  -\int_{0}^{X_0}p\sin w \varphi(X, X)\ {\rm d}X=0,
\end{align}
which completes the proof of \eqref{2.65}.

Moreover, we can repeat the proof process of \eqref{2.73} to show that for any $t\in[0,T]$.
\begin{align}\label{2.73b}
E(t)=\int_0^\pi (\theta_{t}^2+c^2(\theta)\theta_{x}^2)(x,t)\ {\rm d}x +2B(\theta(\pi,t))\leq C_E.
\end{align}
In sum, there has
\begin{lem}
Let $T>0$ be any fixed number and $J(x,t)$ be any $C^\alpha$ function over $\Omega_T$. Assume that the assumptions on initial and boundary conditions in Theorem \ref{thm} hold. Then there exists a weak solution of \eqref{2.1} over $\Omega_T$ with bounded energy $E(t)\leq C_E$ for some $C_E$ depending on $E(0)$ and $J$.
\end{lem}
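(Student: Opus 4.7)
The plan is to assemble the ingredients already developed in Section~\ref{S2}: the semilinear reformulation \eqref{2.17} in the $(X,Y)$ coordinates, the local existence in $\Omega_{1\delta_1}^0$ via Schauder's fixed point theorem, and the uniform two-sided bounds $0<M\leq p,q\leq N$ from the previous lemma. Given any $J\in C^\alpha(\Omega_T)\cap L^\infty$, local existence in the region adjacent to $\Gamma_0$ is obtained as in \cite{CHL20}. The key point is that, because $p$ and $q$ stay uniformly bounded and bounded away from zero on $\widetilde{\Omega}_T$, the coefficients in the right-hand side of \eqref{2.17} remain uniformly Lipschitz in the unknowns and $C^\alpha$ in $J$; thus the local construction can be iterated along successive subregions $\Omega^n$ without the solution blowing up, and the solution $(\theta,w,z,p,q)$ is defined on all of $\widetilde{\Omega}_T$.

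From this I would recover $\theta(x,t)$ through the inverse transformation of Subsection~\ref{S2.4}. The map $(X,Y)\mapsto(x,t)$ defined through the integrals in \eqref{2.18} is well-defined since $p,q>0$ are bounded, and the consistency argument of Subsection~\ref{S2.4} (Cases 1 and 2) shows that $\theta(x,t)$ is unambiguous even where the map fails to be injective. To prove the Hölder-$\tfrac12$ regularity, I would use the characteristic estimates \eqref{2.63} and \eqref{2.64}: integrating along forward and backward characteristics and using the identities $\theta_t\pm c\theta_x=R,S=\tan(w/2),\tan(z/2)$ combined with the bound $p,q\leq N$, one controls $\int [\theta_t\pm c\theta_x]^2\,{\rm d}t$ along each characteristic by a constant depending only on $N$ and $T$, which together with the boundary conditions on $x=0,\pi$ yields $\theta\in C^{1/2}(\Omega_T)$.

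Next I would verify the weak formulation \eqref{2.65}. The natural strategy, already carried out in identities \eqref{2.67}--\eqref{2.72}, is to rewrite the integral as a sum of the symmetric and antisymmetric parts of $\theta_t\pm c\theta_x$ expressed in terms of $(w,z)$, pull back the integral to the $(X,Y)$ plane using \eqref{2.19}, and then apply Green's theorem on $\widetilde{\Omega}_T$. The crucial cancellations come from the equations \eqref{2.17} for $(w,z)$ (which encode the wave equation with source $J$) and the boundary relations on $L_0,L_\pi$ in \eqref{2.23}. Since $\varphi\in\mathcal{F}$ vanishes on $t=0$ and $t=T$, the only surviving boundary contributions in the $(X,Y)$ plane are those along $L_0$ and $L_\pi$, which cancel against the boundary integral $\int_0^T (c^2\varphi\theta_x)\big|_0^\pi\,{\rm d}t$ on using $w+z=0$ on $L_0$ and the Robin relation between $w$ and $z$ on $L_\pi$.

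Finally, the energy estimate $E(t)\leq C_E$ follows by the computation in \eqref{2.74}--\eqref{2.76}: applying Green's theorem to the $1$-form $\tfrac14(1-\cos w)p\,{\rm d}X-\tfrac14(1-\cos z)q\,{\rm d}Y$ on $\widetilde{\Omega}_t$, using the boundary conditions on $L_0,L_\pi$ to recover the boundary energy $B(\theta(\pi,t))$, and absorbing the damping term against the $J$-integral through the elementary inequality $2|J||\theta_t|\leq\theta_t^2+J^2$. The hard part of the whole argument is not any single computation but the organization of the iteration between subregions $\Omega^n$: one must check that the local existence argument in $\Omega_{1\delta_1}^0$ actually extends to each new subregion using the uniform $p,q$ bounds, and that the Green's-theorem identities correctly account for the pieces of $L_\pi$ where the Robin-type relation \eqref{2.23} produces nontrivial boundary contributions. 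Once this bookkeeping is done, everything else reduces to the same change-of-variables calculation carried out in Subsections~\ref{S2.4}--\ref{S2.6}.
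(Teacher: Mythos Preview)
Your proposal is correct and follows essentially the same approach as the paper: the lemma is a summary statement assembling the local existence of Subsection~\ref{S2.3}, the inverse transformation of Subsection~\ref{S2.4}, the global $p,q$ bounds of Subsection~\ref{S2.5}, and the weak-formulation and energy computations \eqref{2.63}--\eqref{2.76} of Subsection~\ref{S2.6}, exactly as you outline. The paper does not give a separate proof beyond pointing back to these ingredients, so your organization of the iteration through the $\Omega^n$ and the Green's-theorem identities matches the intended argument.
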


Finally, for this subsection, we give some comments on the map from $J(x,t)$ to $J(X,Y)$. As pointed out in \cite{CHL20}, one can use a similar method in Bressan, Chen and Zhang \cite{BCZ2} for variational wave equation to show that the uniqueness of forward and backward characteristics for \eqref{2.1}, that is the uniqueness of the $(X,Y)$ coordinates. This uniqueness leads to the uniqueness of $(x_m(X,Y),t_m(X,Y))=(x_n(X,Y),t_n(X,Y))$. Hence, for any given $J(x,t)\in C^\alpha(\Omega_T)$, the solution constructed previously satisfies system \eqref{2.17} with a unique source term $\tilde{J}(X,Y)=J(x(X,Y),t(X,Y))$. Moreover, one can check that $\tilde{J}(X,Y)$ is $L^\infty$. The details can be found in  \cite{CHL20}.

\section{Existence of the coupled system}\label{S3}
In this section, we use the Schauder fixed point theorem to show the global existence of weak solutions to the initial-boundary value problem of the coupled system \eqref{1.1}.

\subsection{Preliminaries for the heat equation}\label{S3.1}

For smooth solutions of \eqref{1.1}, the variable $J=u_x+\theta_t$ satisfies
$$
J_t=J_{xx}+\theta_{tt},
$$
which together with the wave equation in \eqref{1.1} gets
\begin{align}\label{3.1}
J_t-J_{xx}=c(\theta)(c(\theta)\theta_x)_x-\theta_t -J.
\end{align}
According to the initial and boundary conditions \eqref{1.3}-\eqref{1.5}, one obtains
\begin{align}\label{3.2}
J(x,0)=J_0(x):=u_{0}'(x)+\theta_1(x)\in C^\alpha([0,\pi]),\quad J_x(0,t)= J_x(\pi, t)=0.
%J(\pi,t)=-\ell u(\pi, t).
\end{align}

Let $G_0(x,t;\xi,\tau)$ be the fundamental solution of the heat equation
\begin{align}\label{3.5}
G_0(x,t;\xi,\tau)=\fr{1}{2\sqrt{\pi(t-\tau)}}\exp\bigg(-\fr{(x-\xi)^2}{4(t-\tau)}\bigg),\ \ (t\geq\tau).
\end{align}
Then
\begin{align}\label{3.6}
G(x,t;\xi,\tau)=\fr{1}{\pi}\sum_{n=-\infty}^{\infty}\bigg[&G_0\bigg(\fr{x}{\pi},\fr{t}{\pi^2}; 2n+\fr{\xi}{\pi}, \fr{\tau}{\pi^2}\bigg) -G_0\bigg(\fr{x}{\pi},\fr{t}{\pi^2};2n-\fr{\xi}{\pi}, \fr{\tau}{\pi^2}\bigg)\bigg],
\end{align}
and
\begin{align}\label{3.7}
N(x,t;\xi,\tau)=\fr{1}{\pi}\sum_{n=-\infty}^{\infty}\bigg[&G_0\bigg(\fr{x}{\pi},\fr{t}{\pi^2}; 2n+\fr{\xi}{\pi}, \fr{\tau}{\pi^2}\bigg) +G_0\bigg(\fr{x}{\pi},\fr{t}{\pi^2};2n-\fr{\xi}{\pi}, \fr{\tau}{\pi^2}\bigg)\bigg],
\end{align}
are, respectively, the Green function and the Neumann function for the first and the second initial-boundary value problem of the heat equation.  See Li, Yu and Shen \cite{Li-Yu-Shen1, Li-Yu-Shen2} for details. As a function of $(x,t)$, $G(x,t;\xi,\tau)$ satisfies the equation $G_t=G_{xx}$ for $t>\tau$ and $G=0$ for $x=0,\pi$, while as a function of $(\xi,\tau)$, $G(x,t;\xi,\tau)$ satisfies the adjoint equation $G_\tau=-G_{\xi\xi}$ for $\tau<t$ and $G=0$ for $\xi=0,\pi$. Moreover, as a function of $(x,t)$, $N(x,t;\xi,\tau)$ satisfies the equation $N_t=N_{xx}$ for $t>\tau$ and $N_x=0$ for $x=0,\pi$, while as a function of $(\xi,\tau)$, $N(x,t;\xi,\tau)$ satisfies the adjoint equation $N_\tau=-N_{\xi\xi}$ for $\tau<t$ and $N_\xi=0$ for $\xi=0,\pi$.
Furthermore, we have
\begin{align}\label{3.8}
\fr{\pa G}{\pa x}=-\fr{\pa N}{\pa \xi},\quad \fr{\pa G}{\pa \xi}=-\fr{\pa N}{\pa x}.
\end{align}

Set
\begin{align}\label{3.9}
W_\sigma(z,t)=t^{-\fr{\sigma}{2}}\exp\bigg(-\fr{z^2}{16t}\bigg).
\end{align}
A direct calculation arrives at
\begin{align}\label{3.10}
\int_{\mathbb{R}}W_\sigma(\xi,t)\ {\rm d}\xi=4\sqrt{\pi}t^{\fr{1-\sigma}{2}}, \ \ (t>0),
\end{align}
and
\begin{align}\label{3.11}
\int_{0}^t\int_{\mathbb{R}}W_\sigma(\xi, t-\tau)\ {\rm d}\xi{\rm d}\tau=\fr{8\sqrt{\pi}}{3-\sigma}t^{\fr{3-\sigma}{2}}, \ \ (\sigma<3).
\end{align}
In addition, one also has
\begin{align}\label{3.12}
|\eta|^\beta\exp\bigg(-\fr{\eta}{4}\bigg)\les\exp\bigg(-\fr{\eta}{16}\bigg),
\end{align}
for $\eta\geq0$, where $\beta$ is an arbitrary nonnegative real number.

By using the Neumann function $N(x,t;\xi,\tau)$, the solution of the first initial-boundary value problem \eqref{3.1}, \eqref{3.2} can be expressed as
\begin{align}\label{d}
J(x,t)
= & \int_{0}^\pi N(x,t;\xi,0)J_0(\xi)\ {\rm d}\xi -\int_{0}^t\int_{0}^\pi N(x,t;\xi,\tau)(\theta_\tau+J)(\xi,\tau)  \ {\rm d}\xi{\rm d}\tau \nonumber \\
&  +\int_{0}^t N(x,t;\xi,\tau)c^2(\theta)\theta_\xi(\xi,\tau)\bigg|_{\xi=0}^{\xi=\pi}\ {\rm d}\tau -\int_{0}^t\int_{0}^\pi N(x,t;\xi,\tau)cc'\theta_{\xi}^2(\xi,\tau)\ {\rm d}\xi{\rm d}\tau \nonumber \\
& -\int_{0}^t\int_{0}^\pi\pa_\xi N(x,t;\xi,\tau)c^2\theta_{\xi}(\xi,\tau)\ {\rm d}\xi{\rm d}\tau.
\end{align}

\subsection{The existence of fixed point}\label{S3.2}

For any given function $J(x,t)\in C^\alpha(\Omega_T)$, we know by previous analysis that there exists a weak solution $\theta(x,t)=\theta^J(x,t)$ of the wave equation \eqref{2.1}. In view of \eqref{d}, we can define a function $\mathcal{M}(J)$ on $\Omega_T$
\begin{align}\label{3.14}
 \mathcal{M}(J)(x,t)
= & \int_{0}^\pi N(x,t;\xi,0)J_0(\xi)\ {\rm d}\xi -\int_{0}^t\int_{0}^\pi N(x,t;\xi,\tau)(\theta_\tau+J)(\xi,\tau)  \ {\rm d}\xi{\rm d}\tau \nonumber \\
&  +\int_{0}^t N(x,t;\xi,\tau)c^2(\theta)\theta_\xi(\xi,\tau)\bigg|_{\xi=0}^{\xi=\pi}\ {\rm d}\tau -\int_{0}^t\int_{0}^\pi N(x,t;\xi,\tau)cc'\theta_{\xi}^2(\xi,\tau)\ {\rm d}\xi{\rm d}\tau \nonumber \\
&  -\int_{0}^t\int_{0}^\pi\pa_\xi N(x,t;\xi,\tau)c^2\theta_{\xi}(\xi,\tau)\ {\rm d}\xi{\rm d}\tau.
\end{align}
It follows by \eqref{d} that $\mathcal{M}(J)(x,t)$ is a weak solution of
\begin{align}\label{3.14a}
\mathcal{M}_t-\mathcal{M}_{xx}=c(\theta)(c(\theta)\theta_x)_x-\theta_t-J.
\end{align}
Furthermore, we have a map
\begin{align}\label{3.15}
\mathcal{T}:\ J(x,t)\rightarrow \mathcal{M}(J)(x,t),
\end{align}
on $C^\alpha(\Omega_T)$. Following Chen-Huang-Liu \cite{CHL20}, we define a set $\mathcal{K}$ for some constants $\delta$ and $K$
\begin{align}\label{3.16}
\mathcal{K}=\bigg\{&J(x,t)\big|\ \|J(x,t)-J^0(x,t)\|_{C^\alpha(\Omega_\delta)}\leq K,  J(x,0)=J_0(x),\ J_x(0,t)=J_x(\pi,t)=0, \text{ a.e. } \bigg\},
\end{align}
where $\Omega_\delta=[0,\pi]\times[0,\delta]$ and
$$
J^0(x,t)=\int_{0}^{\pi}N(x,t;\xi,0)J_0(\xi)\ {\rm d}\xi.
$$
We first show that, for a given large $K$, the map $\mathcal{T}$ has a fixed point on $\mathcal{K}$ if $\delta$ is sufficiently small. Next we derive the energy estimate to select a constant $K$ that only depends on the initial value. Based on this constant $K$, we can fix the small number $\delta>0$ and then extend the existence on $\Omega_\delta$ to $\Omega_T$ in finite steps.

We apply the Schauder fixed point theorem to show the existence of fixed points on $\mathcal{K}$ for sufficiently small $\delta$. It is clear to see that $\mathcal{K}$ is a compact set in $L^\infty$. To use the Schauder fixed point theorem, it suffices to check that $\mathcal{T}$ maps from $\mathcal{K}$ to itself and is continuous under the $L^\infty$ norm.

\begin{lem}\label{lem}
$\mathcal{T}$ maps from $\mathcal{K}$ to itself.
\end{lem}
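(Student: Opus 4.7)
The plan is to verify that $\mathcal{T}(J)=\mathcal{M}(J)$ satisfies the three defining properties of $\mathcal{K}$: (a) the initial trace $\mathcal{M}(J)(x,0)=J_0(x)$; (b) the Neumann boundary conditions $\partial_x\mathcal{M}(J)(0,t)=\partial_x\mathcal{M}(J)(\pi,t)=0$; and (c) the H\"older estimate $\|\mathcal{M}(J)-J^0\|_{C^\alpha(\Omega_\delta)}\leq K$ for sufficiently small $\delta$. Properties (a) and (b) are immediate consequences of the structural properties of the Neumann function stated after \eqref{3.7}: the approximate-identity property of $N(x,t;\xi,0)$ as $t\downarrow 0$ kills the four time-integral terms in \eqref{3.14} at $t=0$ and reduces the first term to $J_0(x)$, while the identity $N_x(0,t;\xi,\tau)=N_x(\pi,t;\xi,\tau)=0$ makes every $x$-derivative of \eqref{3.14} vanish at $x=0,\pi$ once differentiation under the integral is justified by the Gaussian pointwise bounds.

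The bulk of the proof is (c). I would write $\mathcal{M}(J)-J^0=I_1+I_2+I_3+I_4$, where $I_1,I_2,I_3,I_4$ denote the second, third, fourth, and fifth terms on the right-hand side of \eqref{3.14}, respectively, and bound each $I_k$ both in $L^\infty(\Omega_\delta)$ and in the H\"older seminorm of exponent $\alpha<1/4$ by a positive power $\delta^\gamma$ times a constant depending only on the energy bound $C_E$ from \eqref{2.73b}, on $\bar J(T)$ (available by the definition of $\mathcal{K}$), and on the structural constants $C_L,C_U,C_1$ of \eqref{1.6}. Once such a bound is in hand, the choice of $K$ made in Subsection \ref{S3.2} (independently of $\delta$) absorbs the resulting universal constant, so that $C\delta^\gamma\leq K$ for $\delta$ small. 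The main analytic input is the Gaussian kernel bound $|\partial_x^a\partial_\xi^b N(x,t;\xi,\tau)|\les (t-\tau)^{-(a+b)/2}W_1(x-\xi,t-\tau)$ together with the integrals \eqref{3.10}--\eqref{3.11}. From the energy bound one has $\|\theta_\xi(\cdot,\tau)\|_{L^2},\|\theta_\tau(\cdot,\tau)\|_{L^2}\les C_E^{1/2}$, so Cauchy--Schwarz in $\xi$ yields $|I_1|\les t^{3/4}$, $|I_3|\les t^{1/2}$ (using $\sup_\xi N\les(t-\tau)^{-1/2}$ and $\int\theta_\xi^2\les E$), and $|I_4|\les t^{1/4}$ (using $\|N_\xi(x,t;\cdot,\tau)\|_{L^2_\xi}\les (t-\tau)^{-3/4}$). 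The H\"older seminorm in $(x,t)$ is handled by replacing $N$ with a finite difference and using standard Gaussian smoothness of the kernel to extract a factor $(|x_1-x_2|+|t_1-t_2|^{1/2})^\alpha$, while the remaining integrand retains an integrable $(t-\tau)$-weight.

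The main obstacle is the boundary term $I_2=\int_0^t N c^2\theta_\xi\big|_{\xi=0}^{\xi=\pi}\,{\rm d}\tau$, because $\theta_\xi$ at $\xi=0,\pi$ is not pointwise defined for a solution that is only $H^1$ in space. At $\xi=\pi$ the Robin condition $\theta_x+\iota\theta=0$ replaces $\theta_\xi(\pi,\tau)$ by $-\iota\theta(\pi,\tau)$, which lies in $L^\infty_\tau$ by the $C^{1/2}$ continuity of $\theta$ proved in Subsection \ref{S2.6}, and then $\int_0^t|N(x,t;\pi,\tau)|\,{\rm d}\tau\les t^{1/2}$ controls this contribution. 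At $\xi=0$ the Dirichlet condition $\theta(0,t)=0$ gives no direct control on $\theta_\xi(0,\tau)$, and the plan is to interpret the trace through the characteristic-coordinate construction of Section \ref{S2}: using the uniform bounds $0<M\leq p,q\leq N$ along $L_0$ together with $w+z=0$ on $L_0$, one shows that $c^2\theta_\xi(0,\cdot)\in L^2(0,T)$ with norm controlled by $E(0)$ and $\bar J(T)$, after which Cauchy--Schwarz in $\tau$ against $\|N(x,t;0,\cdot)\|_{L^2_\tau}\les t^{1/4}$ closes the estimate. Summing the four bounds gives $\|\mathcal{M}(J)-J^0\|_{C^\alpha(\Omega_\delta)}\les \delta^\gamma$ for some $\gamma>0$, which is below $K$ for $\delta$ small; together with (a) and (b), this places $\mathcal{T}(J)$ back in $\mathcal{K}$.
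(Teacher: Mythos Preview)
Your proposal is correct and follows the same overall scheme as the paper: (a) and (b) come from the structural properties of the Neumann kernel, and (c) is obtained by showing that each of the four Duhamel pieces of $\mathcal{M}(J)-J^0$ is $C^\alpha$ with norm bounded by a positive power of $\delta$, so that $C\delta^\nu\le K$ closes the argument.

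The differences are in execution. For the hardest piece (your $I_4$, the paper's $L_3$), the paper does not invoke the generic bound $|\partial_x^a\partial_\xi^b N|\les (t-\tau)^{-(a+b)/2}W_1$ and interpolation; instead it expands the image series for $\partial_\xi N$ termwise, splits $\xi$ into the regions $B_1,B_2$ according to which of $|x_1-2n\pi-\xi|$, $|x_2-2n\pi-\xi|$ is larger, uses the elementary inequality $1-e^{-x}\le x^\lambda/\lambda$ to extract the H\"older increment, and then sums in $n$ via the Gaussian tails \eqref{3.30}. This yields the explicit exponent $\nu=\min\{\tfrac14-\tfrac\alpha2,\ \mu-\tfrac14,\ \tfrac\mu2\}$. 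Your higher-level kernel approach gives the same conclusion with less bookkeeping; the paper's computation has the advantage of making the role of the condition $\alpha<\tfrac14$ and the image sum completely transparent.

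On the boundary term you call $I_2$ (the paper's $L_0$), you are more careful than the paper, which simply says $L_0$ ``can be discussed analogously''. Your observation is right: at $\xi=\pi$ the Robin condition replaces $\theta_\xi(\pi,\tau)$ by $-\iota\theta(\pi,\tau)\in L^\infty_\tau$, while at $\xi=0$ the Dirichlet condition gives no direct trace of $\theta_\xi$. Your fix via the characteristic variables is exactly what is available: on $L_0$ one has $w+z=0$, $p=q$, so $c^2\theta_\xi(0,t)=c\,\tan\frac{w}{2}$ and $\mathrm{d}t=\frac{(1+\cos w)p}{2c}\,\mathrm{d}X$, whence $\int_0^T|c^2\theta_\xi(0,t)|^2\,\mathrm{d}t=\int_{L_0} c\sin^2\frac{w}{2}\,p\,\mathrm{d}X\le C_U N X_0$, using the bound $p\le N$ from the lemma in Subsection~\ref{S2.5}. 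After this, Cauchy--Schwarz against $\|N(x,t;0,\cdot)\|_{L^2_\tau}\les t^{1/4}$ gives the needed power of $\delta$, and the H\"older increment in $x$ follows in the same way as for $I_4$.
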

\begin{proof}
According to the properties of the Neumann function $N(x,t;\xi,\tau)$, we know that $\mathcal{M}(J)(x,0)=J_0(x)$ and $\partial_x\mathcal{M}(J)(0,t)=\partial_x\mathcal{M}(J)(\pi,t)=0$. To finish the proof of the lemma, it suffices to verify that $L_{0,1,2,3}(x,t)$ are $C^\alpha$ functions, where
\begin{align*}
L_0(x,t)&=\int_{0}^t N(x,t;\xi,\tau)c^2(\theta)\theta_\xi(\xi,\tau)\bigg|_{\xi=0}^{\xi=\pi}\ {\rm d}\tau, \\
L_1(x,t)&=\int_{0}^t\int_{0}^{\pi}N(x,t;\xi,\tau)(\theta_\tau+J)(\xi,\tau)  \ {\rm d}\xi{\rm d}\tau, \\
L_2(x,t)&=\int_{0}^t\int_{0}^{\pi}N(x,t;\xi,\tau)cc'\theta_{\xi}^2(\xi,\tau)\ {\rm d}\xi{\rm d}\tau, \\
L_3(x,t)&=\int_{0}^t\int_{0}^{\pi}\pa_\xi N(x,t;\xi,\tau)c^2\theta_{\xi}(\xi,\tau)\ {\rm d}\xi{\rm d}\tau.
\end{align*}
We just only consider the function $L_3$, and the functions $L_{0, 1, 2}$ can be discussed analogously.
In view of the expression of $L_3$, it is sufficient to show that $L_3$ is $C^\alpha$ continuous with respect to $x$, that is
\begin{align}\label{3.17}
\int_{0}^t\int_{0}^{\pi}\fr{|\pa_\xi N(x_2,t;\xi,\tau) -\pa_\xi N(x_1,t;\xi,\tau)|}{(x_2-x_1)^\alpha}c^2|\theta_{\xi}|(\xi,\tau)\ {\rm d}\xi{\rm d}\tau\leq Ct^\nu,
\end{align}
for any $x_1<x_2\in[0,\pi]$, where $C$ is positive constant and $\nu\in(0,1)$. Recall the expression of $N(x,t;\xi,\tau)$ in \eqref{3.6}, one has
\begin{align}\label{3.18}
\pa_\xi N(x,t;\xi,\tau)=&\sum_{n=-\infty}^{\infty}\fr{1}{2\sqrt{\pi(t-\tau)}}\bigg[\exp\bigg( -\fr{(x-2n\pi-\xi)^2}{4(t-\tau)}\bigg)\cdot \bigg(-\fr{x-2n\pi-\xi}{2(t-\tau)}\bigg) \nonumber \\
&+\exp\bigg( -\fr{(x-2n
\pi+\xi)^2}{4(t-\tau)}\bigg)\cdot \bigg(-\fr{x-2n\pi+\xi)}{2(t-\tau)}\bigg)\bigg],
\end{align}
from which we obtain
\begin{align}\label{3.19}
\pa_\xi N(x_2,t;\xi,\tau)-\pa_\xi N(x_1,t;\xi,\tau)=\sum_{n=-\infty}^{\infty}(L_{31}^{(n)}+L_{32}^{(n)}),
\end{align}
where
\begin{align*}
L_{31}^{(n)}=&\fr{1}{4\sqrt{\pi}(t-\tau)^{\fr{3}{2}}}\bigg[\exp\bigg( -\fr{(x_1-2n\pi-\xi)^2}{4(t-\tau)}\bigg)\cdot (x_1-2n\pi-\xi ) \nonumber \\
&-\exp\bigg( -\fr{(x_2-2n\pi-\xi)^2}{4(t-\tau)}\bigg)\cdot (x_2-2n\pi-\xi ) \bigg],
\end{align*}
and
\begin{align*}
L_{32}^{(n)}=&\fr{1}{4\sqrt{\pi}(t-\tau)^{\fr{3}{2}}}\bigg[\exp\bigg( -\fr{(x_1-2n\pi+\xi)^2}{4(t-\tau)}\bigg)\cdot (x_1-2n\pi+\xi ) \nonumber \\
&-\exp\bigg( -\fr{(x_2-2n\pi+\xi)^2}{4(t-\tau)}\bigg)\cdot (x_2-2n\pi+\xi ) \bigg],
\end{align*}
We first fix $n\in \mathbb{Z}$ and consider the integral
\begin{align}\label{3.20}
I^{(n)}=&\int_{0}^t\int_{0}^{\pi} \fr{|L_{31}^{(n)}|}{(x_2-x_1)^\alpha}c^2|\theta_{\xi}|(\xi,\tau)\ {\rm d}\xi{\rm d}\tau \nonumber \\
=& \int_{0}^t\int_{B_1} \fr{|L_{31}^{(n)}|}{(x_2-x_1)^\alpha}c^2|\theta_{\xi}|(\xi,\tau)\ {\rm d}\xi{\rm d}\tau + \int_{0}^t\int_{B_2} \fr{|L_{31}^{(n)}|}{(x_2-x_1)^\alpha}c^2|\theta_{\xi}|(\xi,\tau)\ {\rm d}\xi{\rm d}\tau \nonumber \\
=:& I_1^{(n)}+I_2^{(n)},
\end{align}
where
\begin{align*}
B_1:\ \big|x_1-2n\pi-\xi\big|<\big|x_2-2n\pi-\xi\big|, \\
B_2:\ \big|x_1-2n\pi-\xi\big|>\big|x_2-2n\pi-\xi\big|.
\end{align*}
By the definitions of $B_1$ and $B_2$, we find that
\begin{align}\label{3.21}
\big|x_2-x_1\big|\leq  \big|x_2-2n\pi-\xi\bigg| +\big|x_1-2n\pi-\xi\big|
\leq 2\bigg|x_2-2n\pi-\xi\big|,\quad {\rm on}\ B_1,
\end{align}
and
\begin{align}\label{3.22}
\big|x_2-x_1\big|\leq\big|x_2-2n\pi-\xi\big| +\big|x_1-2n\pi-\xi\big|
\leq 2\big|x_1-2n\pi-\xi\big|,\quad {\rm on}\ B_2,
\end{align}
Then we handle $I_1^{(n)}$ as follows
\begin{align}\label{3.23}
I_1^{(n)}\leq &\int_{0}^t\int_{B_1}\fr{ c^2}{4\sqrt\pi}\cdot\fr{1}{(x_2-x_1)^\alpha(t-\tau)^{\fr{3}{2}}}\nonumber \\
&\quad \cdot (x_2-x_1)\exp\bigg(-\fr{(x_2-2n\pi-\xi)^2}{4(t-\tau)}\bigg) |\theta_{\xi}|(\xi,\tau)\ {\rm d}\xi{\rm d}\tau \nonumber \\
&+\int_{0}^t\int_{B_1}\fr{c^2}{4\sqrt{\pi}}\cdot\fr{1}{(x_2-x_1)^\alpha(t-\tau)^{\fr{3}{2}}} \nonumber \\
& \cdot \big|x_1-2n\pi-\xi\big| \cdot\exp\bigg(-\fr{(x_1-2n\pi-\xi)^2}{4(t-\tau)}\bigg) \nonumber \\
&\cdot\bigg| 1-\exp\bigg(-\fr{[(x_2-2n\pi-\xi)^2 -(x_1-2n\pi-\xi)^2]}{4(t-\tau)}\bigg) \bigg| |\theta_{\xi}|(\xi,\tau)\ {\rm d}\xi{\rm d}\tau \nonumber \\
=:& I_{11}^{(n)}+I_{12}^{(n)}.
\end{align}
The term $I_2^{(n)}$ can be treated similarly by symmetry. For $I_{11}^{(n)}$, one gets by \eqref{3.21}
\begin{align}\label{3.24}
I_{11}^{(n)}\les & \int_{0}^t\int_{B_1} \fr{1}{(t-\tau)^{\fr{3}{2}}}\bigg|\fr{x_2}{\pi}-2n-\fr{\xi}{\pi}\bigg|^{1-\alpha} \nonumber \\
&\cdot \exp\bigg(-\fr{(x_2-2n\pi-\xi)^2}{4(t-\tau)}\bigg) |\theta_{\xi}|(\xi,\tau)\ {\rm d}\xi{\rm d}\tau  \nonumber \\
\leq & \bigg\{\int_{0}^t\int_{B_1} \fr{1}{(t-\tau)^{3-2r_1}}\bigg|\fr{x_2}{\pi}-2n-\fr{\xi}{\pi}\bigg|^{2-2\alpha} \nonumber \\ &\cdot\exp\bigg(-\fr{(x_2-2n\pi-\xi)^2}{2(t-\tau)}\bigg) \ {\rm d}\xi{\rm d}\tau \bigg\}^{\fr{1}{2}}\cdot \bigg\{\int_{0}^t\int_{B_1} \fr{|\theta_\xi|^2}{(t-\tau)^{2r_1}} \ {\rm d}\xi{\rm d}\tau \bigg\}^{\fr{1}{2}},
\end{align}
where $r_1$ satisfies
\begin{align}\label{3.25}
\fr{1}{4}+\fr{\alpha}{2}<r_1<\fr{1}{2},\quad {\rm and\ then}\ \sigma_1=4-4r_1+2\alpha<3.
\end{align}
Applying the fact
\begin{align*}
&\bigg(\fr{(x_2-2n\pi-\xi)^2}{t-\tau}\bigg)^{1-\alpha} \exp\bigg(-\fr{(x_2-2n\pi-\xi)^2}{2(t-\tau)}\bigg) \\
\les &\exp\bigg(-\fr{(x_2-2n\pi-\xi)^2}{4(t-\tau)}\bigg),
\end{align*}
we have
\begin{align}\label{3.26}
I_{11}^{(n)}\les & \bigg\{\int_{0}^t\int_{B_1} \fr{(t-\tau)^{1-\alpha}}{(t-\tau)^{3-2r_1}} \cdot\exp\bigg(-\fr{(x_2-2n\pi-\xi)^2}{4(t-\tau)}\bigg) \ {\rm d}\xi{\rm d}\tau \bigg\}^{\fr{1}{2}} \nonumber \\
&\cdot \bigg\{\int_{0}^t\fr{1}{(t-\tau)^{2r_1}}\bigg(\int_{0}^{\pi} |\theta_\xi|^2 \ {\rm d}\xi\bigg){\rm d}\tau \bigg\}^{\fr{1}{2}} \nonumber \\
&\les \|\theta_x\|_{L^2([0,\pi])}t^{\fr{1}{2}-r_1}\bigg\{\int_{0}^t\int_{B_1} (t-\tau)^{-\fr{\sigma_1}{2}}\exp\bigg(-\fr{(x_2-2n\pi-\xi)^2}{4(t-\tau)}\bigg) \ {\rm d}\xi{\rm d}\tau \bigg\}^{\fr{1}{2}}.
\end{align}
Note that
\begin{align*}
\bigg(\fr{x}{\pi}-2n-\fr{\xi}{\pi}\bigg)^2\geq \bigg(\fr{x-\xi}{\pi}\bigg)^2 +(2|n|-1)^2-1, \\
\bigg(\fr{x}{\pi}-2n+\fr{\xi}{\pi}\bigg)^2\geq \bigg(\fr{x-\xi}{\pi}\bigg)^2 +(2|n|-2)^2-4,
\end{align*}
we see that
\begin{align}\label{3.27}
(2|n|-1)^2-1\geq0,\ \ (2|n|-2)^2-4\geq0,\ \ {\rm for}\ |n|\geq2.
\end{align}
Then by \eqref{3.11} one acquires
\begin{align}\label{3.28}
&\int_{0}^t\int_{B_1} (t-\tau)^{-\fr{\sigma_1}{2}}\exp\bigg(-\fr{(x_2-2n\pi-\xi)^2}{4(t-\tau)}\bigg) \ {\rm d}\xi{\rm d}\tau \nonumber \\
\leq &\int_{0}^t\int_{\mathbb{R}} (t-\tau)^{-\fr{\sigma_1}{2}}\exp\bigg(-\fr{(x_2-\xi)^2}{4(t-\tau)}\bigg) \ {\rm d}\xi{\rm d}\tau \les t^{\fr{3-\sigma_1}{2}},
\end{align}
for $|n|< 2$, and
\begin{align}\label{3.29}
&\int_{0}^t\int_{B_1} (t-\tau)^{-\fr{\sigma_1}{2}}\exp\bigg(-\fr{(x_2-2n\pi-\xi)^2}{4(t-\tau)}\bigg) \ {\rm d}\xi{\rm d}\tau \nonumber \\
\leq &\int_{0}^t\int_{\mathbb{R}} (t-\tau)^{-\fr{\sigma_1}{2}}\exp\bigg(-\fr{(x_2-\xi)^2}{4(t-\tau)}\bigg) \cdot \exp\bigg(-\fr{(2|n|-1)^2-1}{4(t-\tau)}\bigg) \ {\rm d}\xi{\rm d}\tau \nonumber \\
\leq &\int_{0}^t\int_{\mathbb{R}} (t-\tau)^{-\fr{\sigma_1}{2}}\exp\bigg(-\fr{(x_2-\xi)^2}{4(t-\tau)}\bigg) \cdot \exp\bigg(-\fr{(2|n|-1)^2-1}{4T}\bigg) \ {\rm d}\xi{\rm d}\tau  \nonumber \\
\les & t^{\fr{3-\sigma_1}{2}} \exp\bigg(-\fr{(2|n|-1)^2-1}{4T}\bigg),
\end{align}
for $|n|\geq 2$. Set
\begin{align}\label{3.30}
A_n=\left\{
\begin{array}{l}
1,\qquad \qquad \qquad \qquad \qquad \quad  |n|<2, \\
\dps \exp\bigg(-\fr{(2|n|-1)^2-1}{8T}\bigg),\ |n|\geq2.
\end{array}
\right.
\end{align}
Obviously, the positive series $\dps\sum_{n=-\infty}^\infty A_n$ is convergent.
Putting \eqref{3.28}-\eqref{3.30} into \eqref{3.26} yields
\begin{align}\label{3.31}
I_{11}^{(n)}
&\les \|\theta_x\|_{L^2([0,\pi])}t^{\fr{1}{2}-r_1}\cdot A_nt^{\fr{3-\sigma_1}{4}} \nonumber \\
&= \|\theta_x\|_{L^2([0,\pi])}t^{\fr{1}{2}-r_1}\cdot A_nt^{\fr{3-(4-4r+2\alpha)}{4}} =\|\theta_x\|_{L^2([0,\pi])}t^{\fr{1}{4}-\fr{\alpha}{2}} A_n.
\end{align}

To deal with the term $I_{12}^{(n)}$, we recall the following property:
\begin{align}\label{3.32}
1-e^{-x}\leq \fr{1}{\lambda}x^\lambda,
\end{align}
for all $x\geq0$ and $0<\lambda\leq 1$.
%The proof of this property can be found in Sofiani \cite{Sofiani}. %QZ: add reference, or remove it.
Since
\begin{align*}
&\bigg(x_2-2n\pi-\xi\bigg)^2 -\bigg(x_1-2n\pi-\xi\bigg)^2 \\
=&(x_2-x_1)\cdot2\bigg(\fr{x_1+x_2}{2}-2n\pi-\xi\bigg)\geq0,
\end{align*}
on the region $B_1$, then we use \eqref{3.32} to achieve
\begin{align}\label{3.33}
0\leq & 1-\exp\bigg(-\fr{(x_2-2n\pi-\xi)^2 -(x_1-2n\pi-\xi)^2}{4(t-\tau)}\bigg) \nonumber \\
\leq & \fr{1}{\fr{1}{2}-\mu} \bigg(\fr{(x_2-x_1)(\fr{x_1+x_2}{2}-2n\pi-\xi)} {2(t-\tau)}\bigg)^{\fr{1}{2}-\mu},
\end{align}
for any number $\mu\in(\fr{1}{4},\fr{1-\alpha}{2})$. Thus we have
\begin{align}\label{3.34}
I_{12}^{(n)}\les &\int_{0}^t\int_{B_1}\fr{1}{(x_2-x_1)^\alpha(t-\tau)^{\fr{3}{2}}} \nonumber \\
& \cdot \bigg|x_1-2n\pi-\xi\bigg| \cdot\exp\bigg(-\fr{(x_1-2n\pi-\xi)^2}{4(t-\tau)}\bigg) \nonumber \\
&\cdot \bigg(\fr{(x_2-x_1)(\frac{x_1+x_2}{2}-2n\pi-\xi)} {(t-\tau)}\bigg)^{\fr{1}{2}-\mu} |\theta_{\xi}|(\xi,\tau)\ {\rm d}\xi{\rm d}\tau.
\end{align}
Noting that
\begin{align*}
&(x_2-x_1)\bigg(\frac{x_1+x_2}{2}-2n\pi-\xi\bigg) =(x_2-x_1)\bigg(\fr{x_2-x_1}{2}+x_1-2n\pi-\xi\bigg) \\
=&\fr{1}{2}\big(x_2-x_1\big)^2 +(x_2-x_1)\cdot\bigg(x_1-2n\pi-\xi\bigg),
\end{align*}
one has
\begin{align*}
&\bigg[(x_2-x_1)\bigg(\frac{x_1+x_2}{2}-2n\pi-\xi\bigg) \bigg]^{\fr{1}{2}-\mu} \\
\les & \big(x_2-x_1\big)^{1-2\mu} +\big(x_2-x_1\big)^{\fr{1}{2}-\mu}\cdot \bigg(x_1-2n\pi-\xi\bigg)^{\fr{1}{2}-\mu}.
\end{align*}
Inserting the above into \eqref{3.34} arrives at
\begin{align}\label{3.35}
I_{12}^{(n)}\les &\int_{0}^t\int_{B_1}\fr{(x_2-x_1)^{1-2\mu-\alpha}}{(t-\tau)^{2-\mu}} \cdot \bigg|x_1-2n\pi-\xi\bigg| \nonumber \\
& \ \ \cdot\exp\bigg(-\fr{(x_1-2n\pi-\xi)^2}{4(t-\tau)}\bigg) \cdot |\theta_{\xi}|(\xi,\tau)\ {\rm d}\xi{\rm d}\tau \nonumber \\
& + \int_{0}^t\int_{B_1}\fr{(x_2-x_1)^{\fr{1}{2}-\mu-\alpha}}{(t-\tau)^{2-\mu}} \cdot \bigg|x_1-2n\pi-\xi\bigg|^{\fr{3}{2}-\mu} \nonumber \\
& \ \ \cdot\exp\bigg(-\fr{(x_1-2n\pi-\xi)^2}{4(t-\tau)}\bigg) \cdot |\theta_{\xi}|(\xi,\tau)\ {\rm d}\xi{\rm d}\tau=:I_{12,1}^{(n)}+I_{12,2}^{(n)}.
\end{align}
For $I_{12,1}^{(n)}$, we get by $1-2\mu-\alpha>0$
\begin{align}\label{3.35_2}
I_{12,1}^{(n)}\les &\bigg(\int_{0}^t\int_{B_1}\fr{1}{(t-\tau)^{4-2\mu-2r_2}} \cdot \bigg|x_1-2n\pi-\xi\bigg|^2 \nonumber \\
& \ \ \cdot\exp\bigg(-\fr{(x_1-2n\pi-\xi)^2}{2(t-\tau)}\bigg) \ {\rm d}\xi{\rm d}\tau \bigg)^{\fr{1}{2}}\cdot \bigg(\int_{0}^t\int_{B_1} \fr{|\theta_{\xi}|^2}{(t-\tau)^{2r_2}} \ {\rm d}\xi{\rm d}\tau \bigg)^{\fr{1}{2}} \nonumber \\
\les & \|\theta_x\|_{L^2([0,\pi])}t^{\fr{1}{2}-r_2} \bigg(\int_{0}^t\int_{B_1}t^{-\fr{\sigma_2}{2}} \exp\bigg(-\fr{(x_1-2n\pi-\xi)^2}{4(t-\tau)}\bigg) \ {\rm d}\xi{\rm d}\tau \bigg)^{\fr{1}{2}},
\end{align}
where $r_2$ and $\sigma_2$ satisfy
\begin{align}\label{3.36}
\fr{3}{4}-\mu<r_2<\fr{1}{2},\quad  \sigma_2=6-4r_2-4\mu<3.
\end{align}
Similar to handling \eqref{3.26}, we can obtain
\begin{align}\label{3.37}
I_{12,1}^{(n)}\les  \|\theta_x\|_{L^2([0,\pi])}t^{\fr{1}{2}-r_2} \cdot t^{\fr{3-\sigma_2}{4}}A_n =\|\theta_x\|_{L^2([0,\pi])}t^{\mu-\fr{1}{4}}A_n.
\end{align}
Now for $I_{12,2}^{(n)}$, one acquires by $\fr{1}{2}-\mu-\alpha>0$
\begin{align}\label{3.38}
I_{12,2}^{(n)}\les & \bigg(\int_{0}^t\int_{B_1}\fr{1}{(t-\tau)^{4-2\mu-2r_3}} \cdot \bigg|x_1-2n\pi-\xi\bigg|^{3-2\mu} \nonumber \\
& \ \ \cdot\exp\bigg(-\fr{(x_1-2n\pi-\xi)^2}{2(t-\tau)}\bigg) \ {\rm d}\xi{\rm d}\tau\bigg)^{\fr{1}{2}} \cdot \bigg(\int_{0}^t\int_{B_1} \fr{|\theta_{\xi}|^2}{(t-\tau)^{2r_3}} \ {\rm d}\xi{\rm d}\tau \bigg)^{\fr{1}{2}} \nonumber \\
\les & \|\theta_x\|_{L^2([0,\pi])}t^{\fr{1}{2}-r_3} \bigg(\int_{0}^t\int_{B_1}t^{-\fr{\sigma_3}{2}} \exp\bigg(-\fr{(x_1-2n\pi-\xi)^2}{4(t-\tau)}\bigg) \ {\rm d}\xi{\rm d}\tau \bigg)^{\fr{1}{2}},
\end{align}
where $r_3$ and $\sigma_3$ satisfy
\begin{align}\label{3.39}
\fr{1}{2}-\fr{\mu}{2}<r_3<\fr{1}{2},\quad \sigma_3=5-4r_3-2\mu<3.
\end{align}
Hence we find that
\begin{align}\label{3.40}
I_{12,2}^{(n)}\les \|\theta_x\|_{L^2([0,\pi])}t^{\fr{1}{2}-r_3}\cdot t^{\fr{3-\sigma_3}{4}} A_n =\|\theta_x\|_{L^2([0,\pi])}t^{\fr{\mu}{2}}A_n.
\end{align}
Combining \eqref{3.35}, \eqref{3.37} and \eqref{3.40} leads to
\begin{align}\label{3.41}
I_{12}^{(n)}\les \|\theta_x\|_{L^2([0,\pi])}(t^{\mu-\fr{1}{4}}+t^{\fr{\mu}{2}})A_n.
\end{align}
Thus we sum up \eqref{3.23}, \eqref{3.31} and \eqref{3.41} to conclude that
\begin{align}\label{3.42}
I_{1}^{(n)}\les \|\theta_x\|_{L^2([0,\pi])}(t^{\fr{1}{4}-\fr{\alpha}{2}} +t^{\mu-\fr{1}{4}}+t^{\fr{\mu}{2}})A_n.
\end{align}
The above estimate is also valid for the term $I_{2}^{(n)}$ in \eqref{3.20}. Therefore there holds
\begin{align}\label{3.43}
I^{(n)}=\int_{0}^t\int_{0}^{\pi} \fr{|L_{31}^{(n)}|}{(x_2-x_1)^\alpha}c^2|\theta_{\xi}|(\xi,\tau)\ {\rm d}\xi{\rm d}\tau \les \|\theta_x\|_{L^2([0,\pi])}(t^{\fr{1}{4}-\fr{\alpha}{2}} +t^{\mu-\fr{1}{4}}+t^{\fr{\mu}{2}})A_n,
\end{align}
which together with \eqref{3.30} implies that the series $\dps\sum_{n=-\infty}^\infty I$ is uniform convergent. Similarly, one also has
\begin{align}\label{3.44}
\int_{0}^t\int_{0}^{\pi} \fr{|L_{32}^{(n)}|}{(x_2-x_1)^\alpha}c^2|\theta_{\xi}|(\xi,\tau)\ {\rm d}\xi{\rm d}\tau \les \|\theta_x\|_{L^2([0,\pi])}(t^{\fr{1}{4}-\fr{\alpha}{2}} +t^{\mu-\fr{1}{4}}+t^{\fr{\mu}{2}})A_n.
\end{align}
Then
\begin{align}\label{3.45}
&\int_{0}^t\int_{0}^{\pi}\fr{|\pa_\xi N(x_2,t;\xi,\tau) -\pa_\xi N(x_1,t;\xi,\tau)|}{(x_2-x_1)^\alpha}c^2|\theta_{\xi}|(\xi,\tau)\ {\rm d}\xi{\rm d}\tau \nonumber \\
\leq & \int_{0}^t\int_{0}^{\pi}\fr{1}{(x_2-x_1)^\alpha}\sum_{n=-\infty}^{\infty}(|L_{31}^{(n)}|+|L_{32}^{(n)}|) c^2|\theta_{\xi}|(\xi,\tau)\ {\rm d}\xi{\rm d}\tau \nonumber \\
= & \sum_{n=-\infty}^{\infty}\int_{0}^t\int_{0}^{\pi}\fr{1}{(x_2-x_1)^\alpha}(|L_{31}^{(n)}|+|L_{32}^{(n)}|) c^2|\theta_{\xi}|(\xi,\tau)\ {\rm d}\xi{\rm d}\tau \nonumber \\
\les & \|\theta_x\|_{L^2([0,\pi])}(t^{\fr{1}{4}-\fr{\alpha}{2}} +t^{\mu-\fr{1}{4}}+t^{\fr{\mu}{2}})\sum_{n=-\infty}^{\infty}A_n\leq Ct^\nu,
\end{align}
for some constant $C>0$, where $\nu=\min\{\fr{1}{4}-\fr{\alpha}{2}, \mu-\fr{1}{4}, \fr{\mu}{2}\}>0$. The proof of Lemma \ref{lem} is completed by choosing $3C\delta^\nu<K$.
\end{proof}

\begin{lem}\label{lem-b}
The map $\mathcal{T}$ is continuous under the $L^\infty$ norm.
\end{lem}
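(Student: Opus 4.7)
The plan is to split continuity of $\mathcal{T}$ into two pieces: a stability statement for the wave equation map $J \mapsto \theta^J$, and convergence of the four heat-kernel integrals that constitute \eqref{3.14}. Both rely on the characteristic coordinates $(X,Y)$ of Section~\ref{S2}.

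Given $J_n \to J$ in $L^\infty(\Omega_T)$ with $J_n, J \in \mathcal{K}$, I would first establish continuous dependence of the wave solution in characteristic form. The semilinear system \eqref{2.17} for $(\theta, w, z, p, q)$ is smooth and locally Lipschitz in the dependent variables and affine in $J$ evaluated at $(x_m(X,Y), t_m(X,Y))$. Since the a priori bounds $0 < M \le p, q \le N$ of Section~\ref{S2.5} hold uniformly in $n$, a Gronwall iteration along the successive characteristic regions $\Omega^k$ of Section~\ref{S2.3} should yield
\[
\|(\theta^n - \theta,\, w^n - w,\, z^n - z,\, p^n - p,\, q^n - q)\|_{L^\infty(\widetilde\Omega_T)} \lesssim \|J_n - J\|_{L^\infty(\Omega_T)} \to 0,
\]
and the explicit change of variables \eqref{2.18} then propagates this to uniform convergence $\theta^n \to \theta$ on $\Omega_T$.

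For the integrals $L_1, L_2, L_3$ of $\mathcal{M}(J)$, I would exploit the area element $dx\,dt = \tfrac{pq}{2c}\cos^2\tfrac{w}{2}\cos^2\tfrac{z}{2}\,dX\,dY$ to rewrite the densities as bounded expressions of $(w, z, p, q, \theta)$. For instance $\theta_\tau\,dx\,dt = \tfrac{pq(\sin w + \sin z)}{8c}\,dX\,dY$ and $cc'\theta_\xi^2\,dx\,dt = \tfrac{c'\,pq\sin^2\tfrac{w-z}{2}}{8c^2}\,dX\,dY$, both uniformly bounded and uniformly convergent in $n$ by the previous step. The kernel estimates already proved in Lemma~\ref{lem}, which bound each $L_i$ by $\|\theta_x\|_{L^2([0,\pi])}(t^{1/4 - \alpha/2} + t^{\mu - 1/4} + t^{\mu/2})\sum_n A_n$, furnish a dominating envelope uniform in $n$ (since $\|\theta^n_x\|_{L^2}$ is uniformly bounded by the energy estimate \eqref{2.76}). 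Dominated convergence then gives $L_i^n \to L_i$ in $L^\infty(\Omega_\delta)$ for $i = 1, 2, 3$.

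The main obstacle will be the boundary integral $L_0$, which contains the pointwise traces $c^2 \theta_\xi$ at $\xi = 0, \pi$. At $\xi = \pi$ the condition $\theta_\xi = -\iota\theta$ from \eqref{1.3} reduces this to uniform convergence of $\theta^n(\pi, \cdot)$, already established. At $\xi = 0$, the Dirichlet condition $\theta(0, t) = 0$ together with $w + z = 0$ on $L_0$ from \eqref{2.23} forces $\theta_\xi(0, t) = \tan(w/2)/c$, a pointwise quantity that converges a.e.\ in $t$ by the $(X,Y)$-coordinate stability; combined with the $L^2_t$ bound on the trace coming from \eqref{2.76}, Vitali's convergence theorem handles the singular kernel $N(x,t;0,\tau)$ against this trace. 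This closes continuity of $\mathcal{T}$ in the $L^\infty$ norm.
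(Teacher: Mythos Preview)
Your proposal is correct and follows the same approach as the paper, which simply appeals to the Lipschitz dependence of $(\theta, w, z, p, q, x, t)(X,Y)$ on $\tilde{J}(X,Y)$ in $L^\infty$, argued as in \cite{CHL20}. Your treatment is in fact more detailed than the paper's, particularly in isolating the boundary trace term $L_0$; one small correction is that the $L^2_t$ bound on $\theta_\xi(0,\cdot)$ comes from the $p$-bound and the parametrization of $L_0$ (via $\theta_\xi(0,\tau)^2\,d\tau = c^{-3}\sin^2\tfrac{w}{2}\,p\,dX$ along $Y=X$) rather than directly from the spatial energy estimate \eqref{2.76}.
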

\begin{proof}
The proof follows from the fact that $(\theta,w,z,p,q,x,t)(X,Y)$ is Lipschitz continuously dependent on $\tilde{J}(X,Y)$ in the $L^\infty$ distance. The proof is entirely similar to that in Chen-Huang-Liu \cite{CHL20} and we omit it here.
\end{proof}

By means of Lemmas \ref{lem} and \ref{lem-b}, we know by the Schauder
fixed point theorem that there exists a function $J^*(x,t)\in \mathcal{K}$ such that
\begin{align}\label{3.46}
\mathcal{M}(J^*)(x,t)=J^*(x,t).
\end{align}
We fix $J=J^*$ and utilize previous results to obtain that
\begin{align}\label{3.47}
\theta_t(\cdot,t),\ \theta_x(\cdot,t)\in L^2([0,\pi]),\quad J\in C^\alpha([0,\pi]\times[0,\delta]).
\end{align}

Furthermore, we consider the initial-boundary value problem for $u$
\begin{align}\label{3.50}
\left\{
\begin{array}{l}
u_t-u_{xx}=\theta_{tx},\\
u(x,0)=u_0(x),\\
u(0,t)=u(\pi,t)=0.
\end{array}
\right.
\end{align}
According to the properties of the Green function $G(x,t;\xi,\tau)$ given in \eqref{3.7}, the weak solution $u(x,t), (t\in[0,\delta])$ of \eqref{3.50} can be expressed as
\begin{align}\label{3.51}
u(x,t)=\int_{0}^\pi G(x,t;\xi,0)u_0(\xi)\ {\rm d}\xi -\int_{0}^t\int_{0}^\pi \pa_\xi G(x,t;\xi,\tau)\theta_\tau(\xi,\tau)\ {\rm d}\xi{\rm d}\tau.
\end{align}
Clearly, this weak solution $u(x,t)$ satisfies
\begin{align}\label{3.52}
\int_{0}^\delta\int_{0}^\pi\bigg(u\psi_t-(u_x+\theta_t)\psi_x\bigg)\ {\rm d}x{\rm d}t +\int_{0}^\delta (u_x+\theta_t)\psi\bigg|_{x=0}^{x=\pi}\ {\rm d}t=0,
\end{align}
for any test functions $\psi\in\mathcal{F}$. Letting $\psi=\varphi_x$ in \eqref{3.52} gives
\begin{align}\label{3.53}
\int_{0}^\delta\int_{0}^\pi \bigg(u_x\varphi_t+(u_x+\theta_t)\varphi_{xx}\bigg)\ {\rm d}x{\rm d}t -\int_{0}^\delta (u_x+\theta_t)\varphi_x\bigg|_{x=0}^{x=\pi}\ {\rm d}t=0,
\end{align}

Finally, we prove $J=u_x+\theta_t$ in some sense. By \eqref{3.14a}, the function $J$ satisfies
\begin{align}\label{3.48}
\int_{0}^\delta\int_{0}^\pi\bigg(J(\varphi^0_t+\varphi^0_{xx})-[(c\varphi^0)_xc\theta_x+(\theta_t+J)\varphi^0]\bigg)\ {\rm d}x{\rm d}t- \int_{0}^\delta [J\varphi^0_x-c^2\varphi^0\theta_x]\bigg|_{x=0}^{x=\pi}\ {\rm d}t=0,
\end{align}
for any test function $\varphi^0(x,t)\in C^\infty_c([0,\pi]\times[0,\delta])$,
which together with \eqref{2.65} yields
\begin{align}\label{3.49}
\int_{0}^\delta\int_{0}^\pi\bigg((J-\theta_t)\varphi^0_t +J\varphi^0_{xx}\bigg)\ {\rm d}x{\rm d}t- \int_{0}^\delta J\varphi^0_x\bigg|_{x=0}^{x=\pi}\ {\rm d}t =0.
\end{align}
We take the difference of \eqref{3.49} and \eqref{3.53}, with the test function $\varphi^0$ vanishing on the boundaries, to obtain
\begin{align}\label{3.54}
\int_{0}^\delta\int_{0}^\pi[J-(u_x+\theta_t)](\varphi^0_t+ \varphi^0_{xx})\ {\rm d}x{\rm d}t- \int_{0}^\delta (J-u_x-\theta_t)\varphi^0_x\bigg|_{x=0}^{x=\pi}\ {\rm d}t=0.
\end{align}
Hence it is clear that $J=u_x+\theta_t$, for a.e. any $(x,t)\in[0,\pi]\times[0,\delta]$.

\subsection{Energy estimate}\label{S3.3}

In this subsection, we derive the energy estimate in Theorem \ref{thm}, which allows us to extend the local solution on $\Omega_\delta$ to $\Omega_T$.

Following \cite{CHL20}, we proceed by the inequality in \eqref{2.74c} and the fact $J=u_x+\theta_t$ in $L^2(\Omega_T)$ sense
\begin{align}\label{3.55}
\int_{0}^{\pi}(\theta_{t}^2+c^2(\theta)\theta_{x}^2)(x,t)\ {\rm d}x
\leq &  \int_{0}^{\pi} (\theta_{t}^2+c^2(\theta)\theta_{x}^2)(x,0)\ {\rm d}x + 2B(\theta(\pi,0)) \nonumber \\ & -2B(\theta(\pi,t)) -2\int_{0}^t\int_{0}^{\pi}\theta_{t}^2\ {\rm d}x{\rm d}t -2\int_{0}^t\int_{0}^{\pi}J\theta_{t}\ {\rm d}x{\rm d}t \nonumber \\
\leq  & \int_{0}^{\pi} (\theta_{t}^2+c^2(\theta)\theta_{x}^2)(x,0)\ {\rm d}x + 2B(\theta(\pi,0)) \nonumber \\
&-2B(\theta(\pi,t)) -2\int_{0}^t\int_{0}^{\pi}\theta_{t}^2\ {\rm d}x{\rm d}t -2\int_{0}^t\int_{0}^{\pi}J(J-u_x)\ {\rm d}x{\rm d}t.
\end{align}
Since $u_t=J_x$ holds in $L^2(\Omega_T)$ sense, one has by integrating by parts
\begin{align}\label{3.56}
\int_{0}^t\int_{0}^{\pi}Ju_x\ {\rm d}x{\rm d}t=&
-\int_{0}^t\int_{0}^{\pi}u_tu\ {\rm d}x{\rm d}t \nonumber \\
=&\fr{1}{2}\int_{0}^{\pi}u_{0}^2(x)\ {\rm d}x-\fr{1}{2}\int_{0}^{\pi}u^2(x,t)\ {\rm d}x.
\end{align}
Putting \eqref{3.56} into \eqref{3.55} yields
\begin{align}\label{3.57}
&\int_{0}^{\pi}(\theta_{t}^2+c^2(\theta)\theta_{x}^2)(x,t)\ {\rm d}x +2B(\theta(\pi,t))
\leq \int_{0}^{\pi} (\theta_{t}^2+c^2(\theta)\theta_{x}^2)(x,0)\ {\rm d}x +2B(\theta(\pi,0)) \nonumber \\ & -2\int_{0}^t\int_{0}^{\pi}\theta_{t}^2\ {\rm d}x{\rm d}t -2\int_{0}^t\int_{0}^{\pi}J^2\ {\rm d}x{\rm d}t + \int_{0}^{\pi}u_{0}^2(x)\ {\rm d}x-\int_{0}^{\pi}u^2(x,t)\ {\rm d}x,
\end{align}
which means that
\begin{align}\label{3.58}
&\fr{1}{2}\int_{0}^{\pi}(\theta_{t}^2+c^2(\theta)\theta_{x}^2 +u^2)(x,t)\ {\rm d}x +B(\theta(\pi,t)) \nonumber \\
\leq & \fr{1}{2}\int_{0}^{\pi} (\theta_{t}^2+c^2(\theta)\theta_{x}^2 +u^2)(x,0)\ {\rm d}x +B(\theta(\pi,0)) -\int_{0}^t\int_{0}^{\pi}(u_x+\theta_t)^2+\theta_{t}^2\ {\rm d}x{\rm d}t,
\end{align}
which is the desired inequality \eqref{1.14}.

\section{The problem with Neumann boundary conditions for heat equation}

In this section, we provide the proof on the initial-boundary value problem of \eqref{1.1} with the Neumann boundary conditions on $u$.

First, in Section 2, we solve a boundary value problem for $\theta$, so the result still holds for the case with the Neumann boundary conditions.

For the existence of $(u,\theta)$ by Schaulder fixed point theorem, to avoid repeat, we only give calculations different from the case with the Dirichlet boundary condition.

Consider system \eqref{1.1} with the initial condition \eqref{1.2} and the following the mixed boundary conditions
\begin{equation}\label{4.1}
\begin{aligned}
(u_x+\theta_t)(0,t)=(u_x+\theta_t)(\pi,t)&=0,\\
\theta(0,t)=\iota \theta(\pi,t)+\theta_x(\pi,t)&=0.
\end{aligned}
\end{equation}
Assume that the initial functions $u_0(x)$, $\theta_0(x)$ and $\theta_1(x)$ satisfy the corresponding compatibility conditions at $0$ and $\pi$.
Based on the results in Section \ref{S2}, for any given $T>0$ and $J\in C^\alpha$, we obtain the global solution $\theta(x,t)$ for the wave equation
$$
\theta_{tt}-c(\theta)(c(\theta)\theta_x)_x+\theta_t+J=0.
$$
Corresponding to the Neumann boundary conditions of $u$, the initial-boundary value problem of $J$ now is
\begin{align}\label{4.2}
\left\{
\begin{array}{l}
J_t-J_{xx}=c(\theta)(c(\theta)\theta_x)_x-\theta_t-J, \\
J(x,0)=J_0(x):=u_{0}'(x)+\theta_1(x)\in C^\alpha([0,\pi]), \\
J(0,t)=J(\pi,t)=0.
\end{array}
\right.
\end{align}
Thanks to the Green function $G(x,t;\xi,\tau)$ given in Section \ref{S3}, the solution $J$ of \eqref{4.2} can be expressed as
\begin{align}\label{4.3}
&J(x,t)
= \int_{0}^\pi G(x,t;\xi,0)J_0(\xi)\ {\rm d}\xi -\int_{0}^t\int_{0}^\pi G(x,t;\xi,\tau)(\theta_\tau+J)(\xi,\tau)  \ {\rm d}\xi{\rm d}\tau \nonumber \\
&\qquad -\int_{0}^t\int_{0}^\pi G(x,t;\xi,\tau)cc'\theta_{\xi}^2(\xi,\tau)\ {\rm d}\xi{\rm d}\tau -\int_{0}^t\int_{0}^\pi\pa_\xi G(x,t;\xi,\tau)c^2\theta_{\xi}(\xi,\tau)\ {\rm d}\xi{\rm d}\tau.
\end{align}
The relation \eqref{4.3} defines a map on $C^\alpha(\Omega_T)$
\begin{align}\label{4.4}
\widetilde{\mathcal{T}}:\ J(x,t)\rightarrow \widetilde{\mathcal{M}}(J)(x,t),
\end{align}
where
\begin{align}\label{4.5}
&\widetilde{\mathcal{M}}(J)(x,t)
= \int_{0}^\pi G(x,t;\xi,0)J_0(\xi)\ {\rm d}\xi -\int_{0}^t\int_{0}^\pi G(x,t;\xi,\tau)(\theta_\tau+J)(\xi,\tau)  \ {\rm d}\xi{\rm d}\tau \nonumber \\
&\qquad -\int_{0}^t\int_{0}^\pi G(x,t;\xi,\tau)cc'\theta_{\xi}^2(\xi,\tau)\ {\rm d}\xi{\rm d}\tau -\int_{0}^t\int_{0}^\pi\pa_\xi G(x,t;\xi,\tau)c^2\theta_{\xi}(\xi,\tau)\ {\rm d}\xi{\rm d}\tau,
\end{align}
which is a weak solution of the following equation
\begin{align}\label{4.6}
\widetilde{\mathcal{M}}_t-\widetilde{\mathcal{M}}_{xx}=c(\theta)(c(\theta)\theta_x)_x-\theta_t-J.
\end{align}
Through the parallel procedure as in Subsection \ref{S3.2}, we can verify that the map $\widetilde{\mathcal{T}}$ exists a fixed point in the set
\begin{align}\label{4.7}
\widetilde{\mathcal{K}}=\bigg\{&J(x,t)\big|\ \|J(x,t)-\widetilde{J}^0(x,t)\|_{C^\alpha(\Omega_\delta)}\leq K,  J(x,0)=J_0(x),\ J(0,t)=J(\pi,t)=0\bigg\},
\end{align}
for some constants $\delta>0$ and $K>0$, where
$$
\widetilde{J}^0(x,t)=\int_{0}^{\pi}G(x,t;\xi,0)J_0(\xi)\ {\rm d}\xi.
$$
Denote this fixed point still by $J\in \widetilde{\mathcal{K}}$. Then there has
\begin{align}\label{4.8}
\widetilde{\mathcal{M}}(J)(x,t)=J(x,t).
\end{align}

We next show $J=u_x+\theta_t$. It follows by \eqref{4.6} and \eqref{4.8} that the weak solution $J$ of \eqref{4.2} satisfies
\begin{align}\label{4.9}
\int_{0}^\delta\int_{0}^\pi\bigg(J(\varphi^{0}_t+\varphi^{0}_{xx}) -[(c\varphi)_xc\theta_x+(\theta_t+J)\varphi^{0}]\bigg)\ {\rm d}x{\rm d}t +\int_{0}^\delta(J_x+c^2\theta_x)\varphi^{0}\bigg|_{x=0}^{x=\pi}\ {\rm d}t=0,
\end{align}
for any test function $\varphi^0(x,t)\in C^\infty_c([0,\pi]\times[0,\delta])$.
Combining \eqref{2.65} and \eqref{4.9}, one obtains
\begin{align}\label{4.10}
\int_{0}^\delta\int_{0}^\pi\bigg((J-\theta_t)\varphi^{0}_t +J\varphi^{0}_{xx}\bigg)\ {\rm d}x{\rm d}t +\int_{0}^\delta J_x\varphi^{0}\bigg|_{x=0}^{x=\pi}\ {\rm d}t=0.
\end{align}
Furthermore, for the variable $u$, we consider the initial-boundary value problem
\begin{align}\label{4.11}
\left\{
\begin{array}{l}
u_t-u_{xx}=\theta_{tx},\\
u(x,0)=u_0(x),\\
u_x(0,t)=0,\ \ u_x(\pi,t)=-\theta_t(\pi,t).
\end{array}
\right.
\end{align}
Let
$$
\tilde{u}(x,t)=u(x,t)+\fr{1}{\pi}\int_{0}^x y\theta_t(y, t)\ {\rm d}y.
$$
Then the initial-boundary value problem for $\tilde{u}$ is
\begin{align}\label{4.11a}
\left\{
\begin{array}{l}
\dps \tilde{u}_t-\tilde{u}_{xx}=(1-\fr{x}{\pi})\theta_{tx} -\fr{1}{\pi}\theta_t +\fr{1}{\pi}xc^2\theta_x -\fr{1}{\pi}\int_{0}^x(c+yc'\theta_y)c\theta_y\ {\rm d}y -\fr{1}{\pi}\int_{0}^x y(\theta_t+J)\ {\rm d}y,\\[8pt]
\dps \tilde{u}(x,0)=u_0(x)+\fr{1}{\pi}\int_{0}^x y\theta_1(y)\ {\rm d}y,\quad x\in[0,\pi], \\[8pt]
\dps \tilde{u}_x(0,t)=\tilde{u}_x(\pi,t)=0.
\end{array}
\right.
\end{align}
Here we have used the wave equation for $\theta(x,t)$. According to the Neumann function $N(x,t;\xi,\tau)$ given in Section \ref{S3}, the weak solution of \eqref{4.11} can be expressed as
\begin{align}\label{4.12}
&u(x,t)=-\fr{1}{\pi}\int_{0}^x y\theta_t(y, t)\ {\rm d}y +\int_{0}^\pi N(x,t;\xi,0)\bigg(u_0(\xi) +\fr{1}{\pi}\int_{0}^\xi y\theta_1(y)\ {\rm d}y\bigg)\ {\rm d}\xi \nonumber \\
&\ \ +\int_{0}^t\int_{0}^\pi N(x,t;\xi,\tau)\bigg(\fr{1}{\pi}\xi c^2\theta_\xi -\fr{1}{\pi}\int_{0}^\xi(c+yc'\theta_y)c\theta_y\ {\rm d}y -\fr{1}{\pi}\int_{0}^\xi y(\theta_\tau+J)\ {\rm d}y\bigg)(\xi,\tau)\ {\rm d}\xi{\rm d}\tau \nonumber \\
&\ \ -\int_{0}^t\int_{0}^\pi\pa_\xi N(x,t;\xi,\tau)\left(1-\fr{\xi}{\pi}\right)\theta_\tau(\xi,\tau)\ {\rm d}\xi{\rm d}\tau,
\end{align}
which satisfies
\begin{align}\label{4.13}
\int_{0}^\delta\int_{0}^\pi\bigg(u\psi^{0}_t-(u_x+\theta_t)\psi^{0}_x\bigg)\ {\rm d}x{\rm d}t=0,
\end{align}
for any test functions $\psi^{0}\in C^\infty_c([0,\pi]\times[0,\delta])$. Letting $\psi^{0}=\varphi^{0}_x$ in \eqref{4.13}, we get
\begin{align}\label{4.14}
\int_{0}^\delta\int_{0}^\pi\bigg(u_x\varphi^{0}_t+(u_x+\theta_t)\varphi^{0}_{xx}\bigg)\ {\rm d}x{\rm d}t -\int_{0}^\delta u\varphi^{0}_t\bigg|_{x=0}^{x=\pi}\ {\rm d}t=0.
\end{align}
By taking the difference of \eqref{4.10} and \eqref{4.14} yields
\begin{align}
\int_{0}^\delta\int_{0}^\pi[J-(u_x+\theta_t)](\varphi^{0}_t+ \varphi^{0}_{xx})\ {\rm d}x{\rm d}t + \int_{0}^\delta (u\varphi^{0}_t+J_x\varphi^{0})\bigg|_{x=0}^{x=\pi}\ {\rm d}t =0,
\end{align}
which implies by the disappearance of the test function $\varphi^{0}$ on the boundaries that $J=u_x+\theta_t$, for a.e. any $(x,t)\in[0,\pi]\times[0,\delta]$.

Finally, the energy estimate in \eqref{3.58} still holds for the current case, so
we can extend the local solution on $\Omega_\delta$ to $\Omega_T$.

\section*{Acknowledgments}
 The first author is partially
supported by NSF with grant DMS-2008504. The second author is partially supported by NSF of China with grants 12071106 and 12171130.

\end{document}